\newcommand{\abs}[1]{{\left|#1\right|}}
\newcommand{\norma}[1]{{\left\Vert#1\right\Vert}}
\def\XXint#1#2#3{{\setbox0=\hbox{$#1{#2#3}{\int}$}
    \vcenter{\hbox{$#2#3$}}\kern-.5\wd0}}
\theoremstyle{plain}
\newtheorem{teor}{Theorem}
\numberwithin{teor}{section}
\numberwithin{equation}{section}
\newenvironment{teorema}{\begin{teor}}{\end{teor}}
\newcommand{\IClabel}{%
  \label{IC}%
  \textnormal{\textbf{(IC$_{H^s,X}$)}}%
}
\newcommand{\ICref}{%
  \textnormal{\textbf{(\hyperref[IC]{\textcolor{magenta}{IC$_{H^s,X}$}})}}%
}
\newcommand{\Cslabel}{%
  \label{Cs}%
  \textnormal{\textbf{(C$_{H^s}$)}}%
}
\newcommand{\Csref}{%
  \textnormal{\textbf{(\hyperref[Cs]{\textcolor{magenta}{C$_{H^s}$}})}}%
}
\theoremstyle{definition}
\newaliascnt{defi}{teor}
\newtheorem{defi}[defi]{Definition}
\newenvironment{definizione}{\begin{defi}}{\end{defi}}
\theoremstyle{plain}
\newaliascnt{lemma}{teor}
\newtheorem{lemma}[lemma]{Lemma}
\theoremstyle{plain}
\newaliascnt{prop}{teor}
\newtheorem{prop}[prop]{Proposition}
\theoremstyle{plain}
\newaliascnt{conjecture}{teor}
\theoremstyle{plain}
\newaliascnt{cor}{teor}
\newtheorem{cor}[cor]{Corollary}
\newenvironment{corollario}{\begin{cor}}{\end{cor}}
\theoremstyle{definition}
\newaliascnt{ex}{teor}
\theoremstyle{definition}
\newaliascnt{oss}{teor}
\newtheorem{oss}[oss]{Remark}
\theoremstyle{plain}
\DeclareMathOperator{\diam}{diam}
\DeclareMathOperator{\R}{\mathbb{R}}
\DeclareMathOperator{\Vol}{\text{V}}
\DeclareMathOperator{\vol}{\text{v}}
\newcommand{\myfootnote}[2]{\begingroup
	\def\@makefnmark{}
	\addtocounter{footnote}{-1}
	\footnote{\textbf{#1} #2}
	\endgroup}
     \newcommand{\RR}[0]{\mathbb{R}}
   \newcommand{\derivparzialecompl}[2]{\dfrac{\partial#1}{\partial #2}}
    \newcommand{\norm}[2]{\left\|#1\right\|_{#2}}
    \newcommand{\om}[0]{\Omega}
\title{On some functionals involving torsional rigidity, principal eigenvalue and perimeter}
\author{Vincenzo Amato, Carlo Nitsch, Cristina Trombetti, and Federico Villone}
\date{}
\newcommand{\Addresses}{{ 
 \bigskip
 \textit{E-mail address}, C.~Nitsch: \texttt{c.nitsch@unina.it}

  \medskip 
 
 \noindent\textit{E-mail address}, C.~Trombetti (corresponding author): \texttt{cristina@unina.it} 
   \medskip

 \noindent\textsc{Dipartimento di Matematica e Applicazioni ``R. Caccioppoli'', Universit\`a degli studi di Napoli Federico II, Via Cintia, Complesso Universitario Monte S. Angelo, 80126 Napoli, Italy.}
 
 \medskip
 
 \noindent\textit{E-mail address}, V. ~Amato: \texttt{v.amato@ssmeridionale.it} 

     \medskip 

 \noindent \textit{E-mail address}, F.~Villone: \texttt{f.villone@ssmeridionale.it} 
  
   \medskip 
 
  \noindent\textsc{Mathematical and Physical Sciences for Advanced Materials and Technologies, Scuola Superiore Meridionale, Largo San Marcellino 10, 80138 Napoli, Italy. }

 \par\nopagebreak 

}} 
\begin{document}

\maketitle

   \begin{abstract}
In this paper we study some relationships between the first Dirichlet eigenvalue $\Lambda(\Omega)$ and the torsional rigidity $T(\Omega)$ of a domain $\Omega$. We consider the problem of optimizing the product $\Lambda(\Omega)T(\Omega)$ among sets with prescribed perimeter, both in the class of open sets with finite perimeter and within the class of convex domains. 

We also present local results for the quantity $\Lambda(\Omega)T(\Omega)^q$, with $q>0$, under either a volume or a perimeter constraint.

\textsc{Keywords:P\'olya functional, Shape optimization, Stability in shape optimization}  \\
\textsc{MSC 2020: 49Q10, 35J25, 49K20, 35J05.}  
\end{abstract}

\begin{center}
\begin{minipage}{10cm}
\small
\tableofcontents
\end{minipage}
\end{center}
\section{Introduction}
We study how the shape of a domain influences two fundamental quantities associated with the Laplacian: the \emph{torsional rigidity}, which measures the resistance of a domain to twisting forces, and the \emph{first Dirichlet eigenvalue}, corresponding to its principal vibration frequency. These quantities capture complementary aspects of a domain's geometry: torsional rigidity relates to elasticity and mechanical stability, while the first eigenvalue is linked to wave propagation and heat diffusion. Understanding their interplay has been a central question in shape optimization since the early 20th century, notably in the seminal work of P\'olya \cite{polya1947elasticite}. 

In our paper, we focus on the problem of optimizing their product among domains with a prescribed perimeter.
For a measurable set $\Omega \subset \mathbb{R}^m$, we define its De Giorgi perimeter as
\[
P(\Omega) := \sup \left\{ \int_\Omega \mathrm{div}\, \varphi \, dx : \varphi \in C_c^\infty(\mathbb{R}^m),\ \|\varphi\|_\infty \le 1 \right\},
\]
and we denote by $V(\Omega)$ its Lebesgue measure. By the isoperimetric inequality, any set with finite perimeter also has finite measure.

For $\om\subset\RR^m$ with finite measure, the torsional rigidity of $\om$ is defined as 
\begin{equation}\label{variationa characteriz torsion}
T(\om) := \max_{\varphi \in H_0^1(\om) \setminus \{0\}} \dfrac{\displaystyle \left( \int_\om \varphi \, dx \right)^2}{\displaystyle\int_\om |\nabla \varphi|^2 \, dx},
\end{equation}
or, equivalently,
\[
T(\om) = \int_\om w\, dx,
\]
where $w\in H_0^1(\om)$ is the unique solution to the Poisson problem
\begin{equation}\label{torsion problem}
\begin{cases}
-\Delta w = 1 & \text{in } \om, \\
w = 0 & \text{on } \partial\om.
\end{cases}
\end{equation}
The torsional rigidity of a set $\om$ satisfies the scaling property
$$T(t\om)= t^{m+2}T(\om) \qquad \forall t>0.$$
By the Saint–Venant inequality, conjectured in \cite{SaintVenant1856} and proved in \cite{PTR48}, the torsional rigidity is maximized by balls among domains of fixed volume. As a consequence, balls also maximize the torsional rigidity  among domains with prescribed perimeter. In a scaling-invariant form, this reads as
\begin{equation}
\label{sanvenper}
P(\Omega)^{-\frac{m+2}{m-1}} T(\Omega) \le P(B)^{-\frac{m+2}{m-1}} T(B),
\end{equation}
where $B$ denotes a ball.

Similarly, the first Dirichlet eigenvalue of the Laplacian on $\Omega$ is defined as
\[
\Lambda(\om) := \min_{\varphi \in H_0^1(\om) \setminus \{0\}} \dfrac{\displaystyle\int_\om |\nabla \varphi|^2 \, dx}{\displaystyle\int_\om \varphi^2 \, dx},
\]
or equivalently, it can be characterized as the smallest $\lambda > 0$ such that the boundary value problem
\[
\begin{cases}
-\Delta u = \lambda u & \text{in } \om, \\
u = 0 & \text{on } \partial\om
\end{cases}
\]
admits a non-trivial solution $u \in H_0^1(\Omega)$.

The first Dirichlet eigenvalue of a set $\om$ satisfies the scaling property
\[
\Lambda(t\Omega) = t^{-2} \Lambda(\Omega), \qquad \forall t>0,
\]
By the Faber–Krahn inequality, conjectured in \cite{Rayleigh1877} and proved in \cite{Faber1923} and \cite{Krahn1925} ,the first Dirichlet eigenvalue is minimized by balls among domains of fixed volume. As a consequence, balls also minimize the first eigenvalue among domains with prescribed perimeter. In a scaling-invariant form, this reads as
\begin{equation}
\label{faberkranper}
P(\Omega)^{\frac{2}{m-1}} \Lambda(\Omega) \ge P(B)^{\frac{2}{m-1}} \Lambda(B).
\end{equation}

As mentioned above, the main goal of this paper is to optimize the product  
\[
T(\Omega)\,\Lambda(\Omega)
\]
under perimeter constraint.  
Equivalently, for each $\Omega \in \mathcal{O}$, where 
\begin{equation}
    \label{openperimeter}
    \mathcal{O} := \{\text{open subsets of } \mathbb{R}^m \text{ with finite perimeter}\},
\end{equation}
we consider the scale-invariant functional  
\begin{equation}
    \label{function_G}
    G(\Omega) := \frac{T(\Omega)\,\Lambda(\Omega)}{P(\Omega)^{\frac{m}{m-1}}}.
\end{equation}

By inequalities \eqref{sanvenper} and \eqref{faberkranper}, the optimization of \(G(\Omega)\) is non-trivial, since we can rewrite it as a product:
\[
G(\Omega) = 
\left[P(\Omega)^{-\frac{m+2}{m-1}}\,T(\Omega)\right]
\left[P(\Omega)^{\frac{2}{m-1}}\,\Lambda(\Omega)\right].
\]
Hence, we can observe a competition between the two functionals, which exhibit opposite behaviours on the ball.

The same competition can be observed in many other functionals, see for instance \cite{AGS,BB21,BM23,BG25,LS}, and in particular in the scaling invariant functional
\begin{equation}
    \label{polyafunc}
    F(\om):=\dfrac{\Lambda(\om)T(\om)}{\Vol(\om)}, \qquad\forall\om\in \mathcal{O},
\end{equation}
introduced by P\'olya in \cite{polya1947elasticite}. In the following, we will refer to the functional $F$ also as the P\'olya functional.  The Blaschke-Santal\'o diagram associated with this functional is explored in \cite{LZ22}.

The functional \(G\) introduced in \eqref{function_G} is closely related to the P\'olya functional \(F\) in \eqref{polyafunc}, since
\begin{equation}
    \label{relationgf}
    G(\Omega) = F(\Omega)\, V(\Omega)\, P(\Omega)^{-\frac{m}{m-1}}.
\end{equation}
This relation provides useful insight into the behaviour of \(G\), as several of its properties can be established analogously to those of \(F\).

In particular, in \cite{VBV2015} and \cite{VFNT2016} the authors proved that
\begin{equation}\label{bound F aperti}
    \inf_{\Omega \in \mathcal{O}} F(\Omega) = 0 \qquad \text{and} \qquad \sup_{\Omega \in \mathcal{O}} F(\Omega) = 1,
\end{equation}

and that neither of these extremal values is attained. The latter result, combined with \eqref{relationgf}, leads us to the following proposition.

 \begin{prop}
\label{openopt}

    Let $G$ be the functional defined in \eqref{function_G} and $\mathcal{O}$ be the set defined in \eqref{openperimeter}. Then 
   \begin{equation*}
           \inf_{ \Omega \in \mathcal{ O} }G(\Omega)= 0,   \qquad 
            \sup_{ \Omega \in \mathcal{ O} }G(\Omega) =\displaystyle m^\frac{m}{m-1}\omega_m^{-\frac{1}{m-1}} ,
    \end{equation*}
where $\omega_m$ denotes the volume of the unit ball in $\mathbb{R}^m$. Furthermore, neither of these extremal values is attained within the class $\mathcal{O}$.
\end{prop}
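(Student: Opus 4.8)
The plan is to exploit the factorization \eqref{relationgf}. Writing
\[
\iota(\Omega):=\frac{V(\Omega)}{P(\Omega)^{\frac{m}{m-1}}},
\]
we have $G=F\,\iota$ on $\mathcal{O}$, with $F$ the P\'olya functional \eqref{polyafunc}. The quantity $\iota$ is scale invariant, and by the isoperimetric inequality $0<\iota(\Omega)\le\iota(B)=V(B)\,P(B)^{-\frac{m}{m-1}}$ for every $\Omega\in\mathcal{O}$, with equality if and only if $\Omega$ is a ball. Both extremal problems for $G$ will then be read off from the known behaviour \eqref{bound F aperti} of $F$ together with this isoperimetric rigidity.

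\emph{Infimum.} For any nonempty $\Omega\in\mathcal{O}$ one has $0<V(\Omega)<\infty$, hence $T(\Omega)>0$ (maximum principle applied to \eqref{torsion problem}), $\Lambda(\Omega)>0$ (Faber--Krahn \eqref{faberkranper}), and $0<P(\Omega)<\infty$; thus $G(\Omega)>0$ and the value $0$ is not attained. On the other hand, since $G=F\,\iota\le\iota(B)\,F$ and $\inf_{\mathcal{O}}F=0$, it suffices to evaluate $G$ along an $F$-minimizing sequence — for instance a fixed unit ball together with a growing finite family of pairwise disjoint balls of radii $t_i<1$ chosen so that $\sum_i t_i^{m}\to\infty$ while $\sum_i t_i^{m+2}<\infty$, along which $\Lambda$ stays equal to $\Lambda(B_1)$, $T$ stays bounded and $V\to\infty$ — to obtain $G\le\iota(B)\,F\to0$. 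Hence $\inf_{\mathcal{O}}G=0$, not attained.

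\emph{Supremum, upper bound.} This is immediate and strict. If $\Omega$ is a ball then $\iota(\Omega)=\iota(B)$ but $F(\Omega)<1$, since $\sup_{\mathcal{O}}F=1$ is not attained, so $G(\Omega)<\iota(B)$. If $\Omega$ is not a ball then $\iota(\Omega)<\iota(B)$, whence $G(\Omega)=F(\Omega)\iota(\Omega)\le\iota(\Omega)<\iota(B)$. In all cases $G(\Omega)<\iota(B)$; therefore $\sup_{\mathcal{O}}G\le\iota(B)$ and the supremum is not attained. The substance is the matching lower bound.

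\emph{Supremum, lower bound --- the crux.} I need a sequence $\Omega_k\in\mathcal{O}$ with $F(\Omega_k)\to1$ \emph{and} $\iota(\Omega_k)\to\iota(B)$ simultaneously. I would take perforations of a fixed ball $B$: remove from $B$ a periodic array (period $\varepsilon$) of balls of critically small radius $r_\varepsilon$ — $r_\varepsilon\sim c\,\varepsilon^{\frac{m}{m-2}}$ for $m\ge3$, exponentially small for $m=2$ — in the Cioranescu--Murat ``strange term'' regime. As $\varepsilon\to0$ the removed set has vanishing measure and, with a suitable number of holes, vanishing perimeter, so $V\to V(B)$, $P\to P(B)$ and $\iota(\Omega_\varepsilon)\to\iota(B)$; simultaneously the torsion problem \eqref{torsion problem} and the eigenvalue problem homogenize to the operator $-\Delta+\mu$ on $H_0^1(B)$, with $\mu=\mu(c)>0$ prescribable and $\mu(c)\to\infty$ as $c\to\infty$. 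Consequently $F(\Omega_\varepsilon)\to(\Lambda(B)+\mu)\big(\int_B u_\mu\big)V(B)^{-1}$, where $u_\mu\in H_0^1(B)$ solves $-\Delta u_\mu+\mu u_\mu=1$; a boundary-layer estimate gives $\int_B u_\mu=V(B)\mu^{-1}(1+o(1))$ as $\mu\to\infty$, so this limit tends to $1$. A diagonal choice $\mu_k\to\infty$ — slowly enough that the total surface measure of the holes still vanishes — together with $\varepsilon=\varepsilon(k)\to0$ fast enough then yields $F(\Omega_k)\to1$, $\iota(\Omega_k)\to\iota(B)$, hence $G(\Omega_k)\to\iota(B)$, giving $\sup_{\mathcal{O}}G=\iota(B)=V(B)\,P(B)^{-\frac{m}{m-1}}$. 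I expect this last step to be the main obstacle: $\iota(\Omega_k)\to\iota(B)$ forces $\Omega_k$ close to a ball in measure and in perimeter (quantitative isoperimetry), while $F(\Omega_k)\to1$ requires $\Lambda(\Omega_k)$ to be driven far above $\Lambda(B)$, and the two are compatible only because a set of small measure and small perimeter can nonetheless carry large capacity — exactly the mechanism of the strange term — so one must control the homogenization limits quantitatively and uniformly in $\mu$ before diagonalizing. Alternatively, one may invoke directly that a maximizing sequence for $F$ in \cite{VBV2015,VFNT2016} can be taken of this perforated, asymptotically isoperimetric form, so that $\iota(\Omega_k)\to\iota(B)$ for free.
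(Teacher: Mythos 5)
Your proposal is correct and follows essentially the same route as the paper: the factorization $G=F\,\iota$ combined with the isoperimetric inequality and the strict bound $F<1$ for the upper bound and non-attainment, and a critically perforated ball in the Cioranescu--Murat regime (so that $\Lambda\to\Lambda(B)+\mu$, $T\to\int_B u_\mu\approx V(B)\mu^{-1}$, with vanishing measure and perimeter of the holes), followed by letting $\mu\to\infty$, to produce the maximizing sequence. The only minor variations are that for the infimum you use growing disjoint unions of balls forcing $F\to0$ whereas the paper uses sets with vanishing isoperimetric ratio together with $F<1$, and the paper avoids your diagonalization by first letting the perforation parameter go to its limit at fixed capacity constant $c$ (bounding $\liminf G(\Omega_n)$ from below) and only then sending $c\to\infty$ and the boundary-layer parameter to $0$.
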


A minimizing sequence can be obtained, for example, by considering sets that become increasingly thin, for which $G(\Omega_n)$ tends to zero.  
Conversely, a maximizing sequence can be constructed by means of a suitable homogenization of a ball. In this way, the functional $F(\Omega_n)$ approaches $1$, and by carefully adjusting the radii of the holes, the corresponding isoperimetric ratio in \eqref{relationgf} can be controlled.

After studying the optimization problem in the class $\mathcal{O}$, it is natural to investigate how the situation may change under additional geometric constraints. In particular, we focus on the subclass of convex domains within $\mathcal{O}$, denoted by
\begin{equation}\label{open and convex}
    \mathcal{C} := \{\text{open, bounded, and convex subsets of } \RR^m\}.
\end{equation}
Moving to the convex class $\mathcal{C}$, several known results on the P\'olya functional already provide significant information about the behavior of $F$. In particular, as proved in \cite[Theorem 1.2]{VFNT2016} and \cite[Remark 4.1]{BM20}, one has
\begin{equation}\label{bound polya}
  \dfrac{\pi^2}{4}\dfrac{1}{m(m+2)} < F(\om) < 1-\varepsilon,  
\end{equation}
for some $\varepsilon > 0$. Sharper lower bounds within restricted classes of convex planar sets can be found in \cite[Theorems 1.4 \& 1.5]{VFNT2020thinning}.

In \cite{VBP21}, it is conjectured that the supremum of $F$ over $\mathcal{C}$ is approached by sequences of collapsing cones, while the infimum is approached by sequences of collapsing cylindroids.

On the other hand, the behavior of $G$ among $\mathcal{C}$ is different; indeed, one can prove that if $\om_n$ is a thinning sequence (defined in \autoref{thinset}), $G(\om_n)$ converges to $0$. Hence, a maximising sequence cannot collapse and we deduce the following proposition.
\begin{prop}\label{esistenza maX G}
    Let $G$ be the functional defined in \eqref{function_G} and $\mathcal{C}$ be the set defined in \eqref{open and convex}. Then

    \begin{equation*}
            \inf_{\om\in\mathcal{C}}G(\Omega) = 0,\qquad \text{and} \qquad
                    \sup_{\om\in \mathcal{C}}G(\Omega)  =\max_{\om\in \mathcal{C}}G(\Omega).  
    \end{equation*}
\end{prop}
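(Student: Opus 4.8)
The infimum statement $\inf_{\Omega\in\mathcal C}G(\Omega)=0$ is the easy half. The plan is to exhibit a concrete sequence: take $\Omega_n=(0,1)^{m-1}\times(0,1/n)$, a family of collapsing slabs. One checks that $\Lambda(\Omega_n)\sim n^2\pi^2$ as $n\to\infty$ (dominated by the thin direction), while $T(\Omega_n)\lesssim n^{-2}|\Omega_n|\sim n^{-3}$ by the obvious upper bound $T(\Omega)\le C\,\mathrm{diam}(\Omega)^2 V(\Omega)$ in the thin direction (or simply $T(\Omega_n)\le V(\Omega_n)/\Lambda(\Omega_n)$ from the variational characterisations), and $P(\Omega_n)\to 2$. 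Hence $G(\Omega_n)=T(\Omega_n)\Lambda(\Omega_n)P(\Omega_n)^{-m/(m-1)}\lesssim n^{-1}\to 0$. Positivity of $G$ on every nonempty $\Omega\in\mathcal C$ then gives $\inf=0$, and it is clearly not attained.

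For the supremum statement the core claim, flagged in the text just before the proposition, is that $G(\Omega_n)\to 0$ along any thinning sequence $\Omega_n$ (in the sense of \autoref{thinset}). Granting this, the argument is a standard direct-method compactness argument, and I would organise it as follows. Normalise by scale invariance so that $P(\Omega_n)=1$ for a maximising sequence $\{\Omega_n\}$, $G(\Omega_n)\to S:=\sup_{\mathcal C}G$. By John's theorem each convex body with unit perimeter contains and is contained in concentric balls whose radii are comparable up to a dimensional constant; after a translation we may assume the in-balls are centred at the origin. Two cases: either the inradii $r_n$ stay bounded below, $r_n\ge r_0>0$, or $\liminf r_n=0$. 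In the second case the diameters are bounded (perimeter is fixed, so by convexity $\mathrm{diam}(\Omega_n)\le C$) while the inradii go to zero along a subsequence, which is exactly the definition of a thinning sequence; then $G(\Omega_n)\to 0$ by the claimed degeneracy, forcing $S=0$ — impossible since $G>0$ on, say, a ball. So we are in the first case: $r_0\le r_n$ and $\mathrm{diam}(\Omega_n)\le C$, hence $\{\Omega_n\}$ is a sequence of convex bodies trapped between two fixed balls.

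Now invoke Blaschke selection: up to a subsequence $\Omega_n\to\Omega^*$ in the Hausdorff metric, with $\Omega^*\in\mathcal C$ (nondegenerate, since it contains a ball of radius $r_0$). For convex bodies converging in Hausdorff distance one has $P(\Omega_n)\to P(\Omega^*)$ (continuity of perimeter on convex sets), $V(\Omega_n)\to V(\Omega^*)$, and — crucially — $\Lambda(\Omega_n)\to\Lambda(\Omega^*)$ and $T(\Omega_n)\to T(\Omega^*)$; the latter two follow from the Mosco/$\gamma$-convergence of $H^1_0(\Omega_n)$ to $H^1_0(\Omega^*)$, which holds for Hausdorff-converging convex (more generally, uniformly Lipschitz) domains, so both the eigenvalue and the torsion — being continuous functions of the resolvent — pass to the limit. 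Passing to the limit in $G(\Omega_n)$ gives $G(\Omega^*)=S$, so the supremum is attained.

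The main obstacle is the claim that $G$ degenerates along thinning sequences; everything else is routine compactness. Concretely one must show $T(\Omega_n)\Lambda(\Omega_n)\to 0$ when $\Omega_n$ thins with bounded diameter. The key point is that $F(\Omega_n)=\Lambda(\Omega_n)T(\Omega_n)/V(\Omega_n)$ stays bounded (indeed $F<1$ by \eqref{bound polya}), so via \eqref{relationgf} it suffices that $V(\Omega_n)\,P(\Omega_n)^{-m/(m-1)}\to 0$; and for a convex body that thins while keeping bounded diameter, the volume tends to zero (a slab of vanishing width) whereas the perimeter stays bounded away from zero, giving the conclusion. I would isolate this volume-vanishing fact as the technical heart, proving it from the definition of thinning in \autoref{thinset} — essentially, small inradius plus bounded diameter for a convex set forces $V\le C\,r_n\,\mathrm{diam}(\Omega_n)^{m-1}\to 0$.
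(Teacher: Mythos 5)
Your infimum half is fine (the slab computation, or simply $T\Lambda\le V$ plus $P\to 2$, works), and the skeleton of your supremum half — normalize $P(\Omega_n)=1$, rule out degeneration of a maximizing sequence, apply Blaschke selection, and pass to the limit using continuity of $T,\Lambda,P,V$ along Hausdorff-converging nondegenerate convex bodies — is exactly the paper's strategy; the paper rules out degeneration by observing that $V(\Omega_n)\to c>0$ (otherwise $G(\Omega_n)\to 0$ by \eqref{to0}) and then bounds the diameter via \eqref{pmagrd}, while you run an inradius dichotomy. The genuine gap is the claim, invoked twice, that a convex body with unit perimeter has uniformly bounded diameter ``by convexity''. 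This is true only for $m=2$: for $m\ge 3$ the convex hull of a segment of length of order $\varepsilon^{-1}$ thickened to radius $\varepsilon$ has surface area of order one and arbitrarily large diameter. Consequently, in your Case 1 the containment of the sequence between two fixed balls — which is what Blaschke selection needs — is not justified as written, and in your Case 2 the ``technical heart'' $V(\Omega_n)\le C\,r_n\,\diam(\Omega_n)^{m-1}\to 0$ also leans on the unavailable upper diameter bound. (The appeal to John's theorem as giving concentric balls of comparable radii is likewise inaccurate — that comparability is precisely what fails for thinning bodies — but it is not load-bearing.)

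Both uses can be repaired with the inequalities the paper already quotes, after which your proof closes. In Case 2, \eqref{measurepr} gives directly $V(\Omega_n)<P(\Omega_n)\,R_{\Omega_n}=R_{\Omega_n}\to 0$, hence $G(\Omega_n)\le V(\Omega_n)/P(\Omega_n)^{\frac{m}{m-1}}\to 0$, contradicting maximality; no upper diameter control (and no detour through \autoref{thinset}) is needed — what the thinning detour would require is a diameter bound from \emph{below}, which does follow from $P=1$. In Case 1, the bound $R_{\Omega_n}\ge r_0$ together with \eqref{measurepr} yields $V(\Omega_n)\ge r_0/m$, and then \eqref{pmagrd} gives $\diam(\Omega_n)\le C(m)\,V(\Omega_n)^{-(m-2)}\le C(m)(m/r_0)^{m-2}$, which is the missing trapping between two fixed balls. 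With these corrections your argument is essentially the paper's proof, with non-degeneracy phrased through the inradius instead of the volume.
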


In the planar case ($m=2$), we are able to show that any maximizer of $G$ over $\mathcal{C}$ enjoys additional regularity compared to other admissible sets.
\begin{prop}\label{regolarità}
Let $G$ be the functional defined in \eqref{function_G}, and let $\mathcal{C}$ be the set defined in \eqref{open and convex}. Let $m=2$ and let $\tilde{\Omega} \in \mathcal{C}$ satisfy
\[
G(\tilde{\Omega}) = \max_{\Omega \in \mathcal{C}} G(\Omega),
\]
then $\tilde{\Omega}$ is of class $C^1$.
\end{prop}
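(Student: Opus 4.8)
The plan is to argue by contradiction, by cutting off a small corner. Since $\tilde{\Omega}$ is convex, if it is not of class $C^1$ then at some boundary point $p$ there are at least two distinct supporting lines; equivalently, the two one-sided tangent rays to $\partial\tilde{\Omega}$ at $p$ enclose an interior angle $\theta\in(0,\pi)$. For small $\varepsilon>0$ I pick $p_1,p_2\in\partial\tilde{\Omega}$, one on each side of $p$, with $|p_1-p|=|p_2-p|=\varepsilon$, and set $\tilde{\Omega}_\varepsilon:=\tilde{\Omega}\setminus\overline{T_\varepsilon}$, where $T_\varepsilon$ is the open triangle with vertices $p,p_1,p_2$; equivalently, $\tilde{\Omega}_\varepsilon$ is the intersection of $\tilde{\Omega}$ with the open half-plane bounded by the line through $p_1,p_2$ not containing $p$. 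Then $\tilde{\Omega}_\varepsilon\subset\tilde{\Omega}$ and $\tilde{\Omega}_\varepsilon\in\mathcal{C}$ for $\varepsilon$ small, so it is an admissible competitor. Two of the needed estimates are immediate: since passing from $\partial\tilde{\Omega}$ to $\partial\tilde{\Omega}_\varepsilon$ replaces the corner arc from $p_1$ to $p_2$ — of length at least $|p_1-p|+|p-p_2|=2\varepsilon$ — by the chord $[p_1,p_2]$, of length $2\varepsilon\sin(\alpha_\varepsilon/2)$ with $\alpha_\varepsilon=\angle p_1pp_2\to\theta$, and since $\theta<\pi$, one gets $P(\tilde{\Omega})-P(\tilde{\Omega}_\varepsilon)\ge c\,\varepsilon$ with $c=c(\theta)>0$ for all small $\varepsilon$; and by domain monotonicity of the first eigenvalue, $\Lambda(\tilde{\Omega}_\varepsilon)\ge\Lambda(\tilde{\Omega})$.

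The crux is to show that the torsional rigidity loses only a sub-linear amount, $T(\tilde{\Omega})-T(\tilde{\Omega}_\varepsilon)=o(\varepsilon)$ — in fact $O(\varepsilon^2)$. Denote by $w=w_{\tilde{\Omega}}$ and $w_\varepsilon=w_{\tilde{\Omega}_\varepsilon}$ the torsion functions, with $w_\varepsilon$ extended by $0$, so $w_\varepsilon\in H^1_0(\tilde{\Omega}_\varepsilon)\subset H^1_0(\tilde{\Omega})$. Using $-\Delta w=-\Delta w_\varepsilon=1$ together with $\int_{\tilde{\Omega}}|\nabla w|^2=T(\tilde{\Omega})$ and $\int_{\tilde{\Omega}}|\nabla w_\varepsilon|^2=T(\tilde{\Omega}_\varepsilon)$, one verifies that $w-w_\varepsilon$ is Dirichlet-orthogonal to $H^1_0(\tilde{\Omega}_\varepsilon)$, whence
\[
T(\tilde{\Omega})-T(\tilde{\Omega}_\varepsilon)=\int_{\tilde{\Omega}}|\nabla(w-w_\varepsilon)|^2=\min_{\eta\in H^1_0(\tilde{\Omega}_\varepsilon)}\int_{\tilde{\Omega}}|\nabla(w-\eta)|^2 .
\]
I then test with $\eta=(1-\rho_\varepsilon)w$, where $\rho_\varepsilon$ is a smooth cutoff equal to $1$ on a neighbourhood of $\overline{T_\varepsilon}$, supported in $B_\varepsilon(p)$, with $|\nabla\rho_\varepsilon|\lesssim1/\varepsilon$; this $\eta$ belongs to $H^1_0(\tilde{\Omega}_\varepsilon)$ since it vanishes near $\overline{T_\varepsilon}$ and on $\partial\tilde{\Omega}$. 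Because $\tilde{\Omega}$ is convex, $w$ is Lipschitz up to $\partial\tilde{\Omega}$ and vanishes there, so $|w|\lesssim\varepsilon$ and $|\nabla w|\lesssim1$ on $B_\varepsilon(p)\cap\tilde{\Omega}$; hence
\[
T(\tilde{\Omega})-T(\tilde{\Omega}_\varepsilon)\le\int_{\tilde{\Omega}}|\nabla(\rho_\varepsilon w)|^2\lesssim\int_{B_\varepsilon(p)\cap\tilde{\Omega}}\big(|\nabla\rho_\varepsilon|^2w^2+\rho_\varepsilon^2|\nabla w|^2\big)\lesssim\varepsilon^{-2}\cdot\varepsilon^2\cdot\varepsilon^2+\varepsilon^2=O(\varepsilon^2).
\]

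Combining the three estimates,
\[
G(\tilde{\Omega}_\varepsilon)=\frac{T(\tilde{\Omega}_\varepsilon)\,\Lambda(\tilde{\Omega}_\varepsilon)}{P(\tilde{\Omega}_\varepsilon)^2}\ge\frac{\big(T(\tilde{\Omega})-O(\varepsilon^2)\big)\Lambda(\tilde{\Omega})}{\big(P(\tilde{\Omega})-c\varepsilon\big)^2}=G(\tilde{\Omega})\,\frac{1-O(\varepsilon^2)}{1-2c\,P(\tilde{\Omega})^{-1}\varepsilon+O(\varepsilon^2)}>G(\tilde{\Omega})
\]
for $\varepsilon$ small enough, which contradicts the maximality of $\tilde{\Omega}$. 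Therefore $\partial\tilde{\Omega}$ admits a unique supporting line at each point, which for a convex body is equivalent to $\partial\tilde{\Omega}$ being of class $C^1$.

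The main obstacle is precisely the sub-linear torsion bound. Testing the variational definition of $T(\tilde{\Omega}_\varepsilon)$ directly with a truncation $w\eta_\varepsilon$ gives only a loss of order $\varepsilon$, because of the cross term coupling to the bulk energy $\int_{\tilde{\Omega}}|\nabla w|^2=T(\tilde{\Omega})$; passing through the orthogonality identity removes that bulk term and leaves only the genuinely localized energy $\int_{B_\varepsilon(p)}|\nabla(\rho_\varepsilon w)|^2=O(\varepsilon^2)$, which is what the linear perimeter gain can overcome. (This localization is where planarity is comfortably used: the non-$C^1$ points of a planar convex body form a set of isolated corners.) A minor technical input is the Lipschitz regularity up to the boundary of the torsion function of a bounded convex domain, which I would take from standard elliptic theory; alternatively, the same sub-linear estimate follows from the comparison principle, bounding $0\le w-w_\varepsilon\le(\sup_{[p_1,p_2]}w)\,\omega$ on $\tilde{\Omega}_\varepsilon$ with $\omega$ the harmonic measure of the chord $[p_1,p_2]$, and integrating $\omega$ against Lebesgue measure via the planar formula (the subtended angle).
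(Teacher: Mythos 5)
Your proposal is correct and follows the same overall strategy as the paper: assume a corner, cut it off near the vertex, use monotonicity of $\Lambda$, gain perimeter linearly in $\varepsilon$ (the arc-versus-chord computation is the same geometry as the paper's explicit triangle estimate, giving a factor $1-\sin$ of the half-angle, which is positive precisely because the corner angle is below $\pi$), show the torsion only drops by $O(\varepsilon^2)$, and conclude $G(\tilde\Omega_\varepsilon)>G(\tilde\Omega)$ for small $\varepsilon$. The one place where you genuinely deviate is the torsion step: the paper plugs the truncation $\eta w$ directly into the variational quotient \eqref{variationa characteriz torsion} and expands numerator ($-O(\varepsilon^3)$) and denominator ($+O(\varepsilon^2)$), using that $\nabla w$ vanishes at the corner so $w\lesssim|x|$ there, whereas you use the Dirichlet-orthogonality identity $T(\tilde\Omega)-T(\tilde\Omega_\varepsilon)=\min_{\eta\in H^1_0(\tilde\Omega_\varepsilon)}\int_{\tilde\Omega}|\nabla(w-\eta)|^2$ and test with $\rho_\varepsilon w$, needing only the Lipschitz bound $w\lesssim\operatorname{dist}(\cdot,\partial\tilde\Omega)$; this is a clean variant that avoids expanding the quotient. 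Two small remarks: your motivation for the detour is off, since the direct test does \emph{not} lose order $\varepsilon$ --- the cross term localizes to the $O(\varepsilon^2)$-area corner region where $w=O(\varepsilon)$ (this is exactly the paper's computation, which also yields an $O(\varepsilon^2)$ loss); and your cutoff as stated is inconsistent, because $\overline{T_\varepsilon}$ touches $\partial B_\varepsilon(p)$ at $p_1,p_2$, so $\rho_\varepsilon$ cannot be $1$ near $\overline{T_\varepsilon}$ while being supported in $B_\varepsilon(p)$ --- take it supported in $B_{2\varepsilon}(p)$ instead (the paper's two layers $C_\varepsilon\subset C_{2\varepsilon}$ play this role), which changes nothing in the orders of magnitude.
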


Whether the maximum in \autoref{esistenza maX G} is attained by a ball remains an open problem; numerical simulations seem to support this conjecture.
In this direction, we prove that the ball is a \textbf{local maximizer} of $G$. More precisely, we optimize the functional $G$ in the classes of \emph{nearly spherical domains}, defined as follows.

\begin{definizione}\label{def Sdg}
    Let $B_1$ 
denote the unit ball in 
$\R^m$. For all $h\in C^0(\partial B_1)$ such that $\norm{h}{L^\infty(\partial B_1)}<\frac{1}{2}$, we denote as $B_h$ the set defined through its boundary as
\[\partial B_h:=\set{x \in \mathbb{R}^m : x = (1 + h(y)) y, \, y \in \partial B_1}.\]
For any $\delta,\gamma>0$, we define the class $S_{\delta,\gamma}$ of nearly spherical sets as 
\begin{equation}
\label{sdg}
    S_{\delta,\gamma}:=\set{B_h: h\in C^{2,\gamma}(\partial B_1), \quad \int_{\partial B_1}h\,d\mathcal{H}^{m-1}=0,\quad \int_{\partial B_1}h x\,d\mathcal{H}^{m-1}=0\,\,\quad\text{and}\quad \norm{h}{C^{2,\gamma}}\leq \delta}.
\end{equation}

\end{definizione}
We now establish a local optimality and stability result for the ball within the class of nearly spherical sets.
\begin{teorema}\label{Optimality of the ball}
Let $G$ be the functional defined in \eqref{function_G}, and let $\gamma\in(0,1)$. There exists $\delta>0$ and a positive constant $C=C(m,\delta)$  such that for every $B_h\in S_{\delta,\gamma}$ (defined in \eqref{sdg})
\[G(B_1)-G(B_h)> C\norm{h}{H^1(\partial B_1)}^2.\]
   
\end{teorema}

The constant appearing in \autoref{Optimality of the ball} is not given explicitly.

Moreover, we use similar techniques to establish local optimality results for other Pólya-type functionals. 
In particular, for any \(q>0\), we consider the family of functionals
\begin{equation}
    \label{fq}
    F_q(\Omega) := \dfrac{T(\Omega)^q\,\Lambda(\Omega)}{\Vol(\om)^{\alpha_q}}, \qquad \text{where} \qquad \alpha_q = \dfrac{(m+2)q - 2}{m}.
\end{equation}
Notice that \(F_q\) coincides with the \emph{Kohler--Jobin functional} \cite{KJ1,KJ2} when \(q = \frac{2}{m+2}\), and with the \emph{Pólya functional} defined in \eqref{polyafunc} when \(q=1\). 

Several structural properties of the functionals $F_q$ have been established in the literature; see for instance \cite{VBP21,BBL24}. 

In particular, \eqref{bound polya} contains the bounds for the Pólya functional, 
while in \cite{KJ1} (see also \cite{B14}) the author proves that
\begin{equation}
\label{K-J}
    F_{\frac{2}{m+2}}(\Omega)
= T(\Omega)^{\frac{2}{m+2}} \Lambda(\Omega)
\;\ge\;
T(B)^{\frac{2}{m+2}} \Lambda(B) \qquad\forall \Omega \in \mathcal{O}.
\end{equation}
A summary of the known results regarding the infimum and supremum of $F_q$ over different classes of domains and ranges of $q$ is given in Table \ref{tab:Fq_properties}.

\begin{table}[H]
\centering
\renewcommand{\arraystretch}{1.3}
\begin{tabular}{|c|c|c|c|c|}
\hline
Set class & \( q \leq \frac{2}{m+2} \) & \( \frac{2}{m+2} < q < 1 \) & \( q = 1 \)& \( q > 1 \) \\
\hline
 \(\mathcal{O}\)
& \(\min_{\mathcal{O}} F_q = F_q(B_1)\) & \(\inf_{\mathcal{O}} F_q = 0\) &  \(\inf_{\mathcal{O}} F_q = 0\) & \(\inf_{\mathcal{O}} F_q = 0\) \\
& \(\sup_{\mathcal{O}} F_q = +\infty\) & \(\sup_{\mathcal{O}} F_q = +\infty\) & \(\sup_{\mathcal{O}} F_q  =1\) & \(\sup_{\mathcal{O}} F_q  <+\infty\) \\
\hline
\(\mathcal{C}\)
& \(\min_{\mathcal{C}} F_q = F_q(B_1)\) & \(\min_{\mathcal{C}} F_q > 0\) & \(\inf_{\mathcal{C}} F_q >0\)&\(\inf_{\mathcal{C}} F_q = 0\) \\
& \(\sup_{\mathcal{C}} F_q = +\infty\) & \(\sup_{\mathcal{C}} F_q = +\infty\) &  \(\sup_{\mathcal{C}} F_q < +\infty\)&\(\max_{\mathcal{C}} F_q < +\infty\) \\
\hline
\end{tabular}
\caption{Summary of infimum and supremum values of the functionals \(F_q\) over classes of domains for different ranges of \(q\).}
\label{tab:Fq_properties}
\end{table}

Furthermore, in \cite{BLNP23} it is shown that there exists $q_1 > 1$ such that for all $q \geq q_1$,
\[
\max_{\mathcal{C}} F_q = F_q(B_1).
\]

Denoting by $j_{\frac{m}{2}-1}$ the first positive zero of the Bessel function $J_{\frac{m}{2}-1}$, we have the following characterization of the local behaviour of $B_1$ for the functionals $F_q$.

\begin{teorema}\label{teor F_q}
Let $F_q$ be the functional defined in \eqref{fq} with $q>0$, and let $\gamma>0$. Then the following properties hold:
\begin{enumerate}[(i)]
 \item If $q<\dfrac{2}{m+2}$, then there exists $\delta>0$ and a constant $C=C(m,\delta, q)$  such that for every $B_h\in S_{\delta,\gamma}$ (defined in \ref{sdg})
\[F_q(B_h)-F_q(B_1)> C\norm{h}{H^{\frac{1}{2}}(\partial B_1)}^2.\]

\item  If $\dfrac{2}{m+2}<q<q^*:=\dfrac{2}{m+2}\left(\dfrac{j_{\frac{m}{2}-1}^2}{m}-1\right)$, $B_1$ is neither a local maximum nor a local minimum, i.e. for all $\delta>0$ there exist $B_{h_1},\,B_{h_2}\in S_{\delta,\gamma} $ such that 
\[F_q(B_{h_1})<F_q(B_1)\quad \text{and}\quad F_q(B_{h_2})>F_q(B_1). \]
\item If $q>q^* $, then there exists $\delta>0$ and a constant positive $C=C(m,\delta,q)$  such that for every $B_h\in S_{\delta,\gamma}$
\[F_q(B_1)-F_q(B_h)> C\norm{h}{H^{\frac{1}{2}}(\partial B_1)}^2.\]

\end{enumerate}
\end{teorema}

\begin{oss}\label{oss fq loc}
Since $2m < j^2_{\frac{m}{2}-1} < m \left( \frac{m}{2}+2 \right)$ (see \cite{ismail1995bounds}), it follows that $\dfrac{2}{m+2} < q^* < 1$. Consequently, in every dimension there exists an interval of $q$ values in which $B_1$ behaves as a saddle point. 
\end{oss}

\textbf{Organization of the Paper.} In \autoref{section2}, we recall some preliminary notions. In \autoref{section open sets}, we study the optimization problem over \(\mathcal{O}\). \autoref{optimization convex sets} is devoted to the existence and regularity of a maximizer for \(G\) over \(\mathcal{C}\). In \autoref{optimization nearly spherical}, we investigate the local properties of \(G\) and \(F_q\) within the class of nearly spherical domains. Finally, in \autoref{sec6}, we discuss some possible generalizations.

\section{Notation and Preliminaries}
\label{section2}  

This section is divided into two parts. In the first part, we recall the main definitions and properties of convex sets, while in the second part, we summarize the key results on shape derivatives that will be used throughout the paper.

\subsection{Some Properties of Convex Sets}

\begin{definizione}\label{support}
Let $\Omega$ be a bounded, open, and convex subset of $\mathbb{R}^m$. The \emph{support function} of $\Omega$ is defined as
\[
h_\Omega(y) = \sup_{x \in \Omega} (x \cdot y), \qquad y \in \mathbb{R}^m.
\]

Moreover, the \emph{width} of $\Omega$ in the direction $y \in \mathbb{R}^m$ is
\[
\omega_\Omega(y) = h_\Omega(y) + h_\Omega(-y),
\]
and, since the width is continuous with respect to the direction, there exists the \emph{minimal width} of $\Omega$ is
\[
w(\Omega) = \min \{ \omega_\Omega(y) \mid y \in \mathbb{S}^{m-1} \}.
\]

Moreover, we define the \emph{diameter} of $\Omega$ as 
$$\diam(\Omega) \coloneqq \sup \{ \lVert x - y \rVert_{\R^m} \mid x,y \in \Omega \}.
$$

\end{definizione}
\begin{definizione}
\label{thinset}
    Let $\{\Omega_n\}$ be a sequence of non-empty, bounded, open and convex sets of $\mathbb{R}^m$. We say that $\Omega_n$ is a sequence of thinning domains if
    \begin{equation*}
        \lim_n\dfrac{w({\Omega_n})}{\diam(\Omega_n)}=0.
    \end{equation*}
\end{definizione}
If we denote by $R_\Omega$ the inradius of $\Omega$, i.e.
\[
R_\Omega = \sup \{ r \in \mathbb{R} : B_r(x) \subset \Omega \text{ for some } x \in \Omega \},
\]
we have the following estimate, proved in \cite{fenchel_bonnesen} for the planar case and extended in \cite{brasco_2020_principal_frequencies} to arbitrary dimensions.

\begin{prop}\cite[Lemma B.1 ]{brasco_2020_principal_frequencies}
For any  $\Omega \in \mathcal{C}$, with $\mathcal{C}$ defined in \eqref{open and convex}, we have
\begin{equation} \label{measurepr}
\dfrac{1}{m} \leq \dfrac{\Vol(\Omega)}{P(\Omega) R_\Omega} < 1.
\end{equation}
The upper bound is sharp for a sequence of thinning cylinders, while the lower bound is sharp, for example, on balls. Moreover, for $m=2$, any circumscribed polygon (i.e., a polygon whose incircle touches all sides) attains the lower bound with equality. 
\end{prop}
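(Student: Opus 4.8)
The plan is to prove the two bounds separately, both built on the largest inscribed ball $B_{R_\Omega}(x_0)\subseteq\Omega$. For the lower bound I would translate so that $x_0=0$ and apply the divergence theorem to the field $x\mapsto x$: since $\mathrm{div}\,x=m$, this gives $m\,V(\Omega)=\int_{\partial\Omega}x\cdot\nu(x)\,d\mathcal{H}^{m-1}(x)$, with $\nu$ the outer unit normal, well defined $\mathcal{H}^{m-1}$-a.e.\ on the boundary of a convex body. For a.e.\ such $x$, the quantity $x\cdot\nu(x)$ equals $h_\Omega(\nu(x))$, i.e.\ the distance from $0$ to the supporting hyperplane of $\Omega$ at $x$; since $B_{R_\Omega}(0)\subseteq\Omega$, that distance is at least $R_\Omega$. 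Hence $m\,V(\Omega)=\int_{\partial\Omega}h_\Omega(\nu)\,d\mathcal{H}^{m-1}\ge R_\Omega\,P(\Omega)$. In the equality case every supporting hyperplane is tangent to the inscribed sphere; for $m=2$ this forces every side of $\partial\Omega$ to touch the incircle, i.e.\ $\Omega$ is a polygon circumscribed about its incircle (or a disk), and for a ball $B_R$ one checks $V/(R\,P)=\omega_m R^m/(R\cdot m\omega_m R^{m-1})=1/m$.

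For the upper bound I would use inner parallel bodies. Set $d(x):=\mathrm{dist}(x,\partial\Omega)$ for $x\in\overline\Omega$; since $\Omega$ is convex, $d$ is concave (it is the minimum over supporting hyperplanes of the corresponding affine distance functions), hence Lipschitz with $|\nabla d|=1$ a.e., and $0\le d\le R_\Omega$ with the maximum attained. For $t\in(0,R_\Omega)$ the set $\Omega^{-t}:=\{d\ge t\}$ is a convex body with $\partial\Omega^{-t}=\{d=t\}$, so $P(\Omega^{-t})=\mathcal{H}^{m-1}(\{d=t\})$. The coarea formula for $d$ then yields
\[
V(\Omega)=\int_\Omega|\nabla d|\,dx=\int_0^{R_\Omega}\mathcal{H}^{m-1}(\{d=t\})\,dt=\int_0^{R_\Omega}P(\Omega^{-t})\,dt .
\]
Monotonicity of perimeter under inclusion of convex bodies gives $P(\Omega^{-t})\le P(\Omega)$, whence $V(\Omega)\le R_\Omega P(\Omega)$; and since the inclusion $\Omega^{-t}\subsetneq\Omega$ is proper for every $t>0$, the strict version of this monotonicity (which for convex bodies can be obtained from $\Omega^{-t}+B_t\subseteq\Omega$ together with the Steiner formula, or from strict monotonicity of mixed volumes) upgrades it to $V(\Omega)<R_\Omega P(\Omega)$. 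Sharpness of the constant $1$ follows by taking boxes $\prod_{i=1}^m[0,L_i]$ with $L_1\gg\cdots\gg L_m$, for which $R_\Omega=L_m/2$, $P(\Omega)\sim 2\prod_{i<m}L_i$ and $V(\Omega)=\prod_i L_i$, so that $V/(R_\Omega P)\to1$; the same is seen along any thinning sequence of cylinders.

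The step I expect to be the main obstacle is the upper bound: setting up the coarea identity rigorously (justifying $|\nabla d|=1$ a.e., the identification $\{d=t\}=\partial\Omega^{-t}$ for $t<R_\Omega$, and $\mathcal{H}^{m-1}(\partial\Omega^{-t})=P(\Omega^{-t})$) and, above all, extracting the \emph{strict} inequality rather than just $\le$. The lower bound and the characterization of its equality case are comparatively routine once the support-function reading of $x\cdot\nu$ is in hand.
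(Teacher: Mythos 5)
There is nothing to compare against inside the paper: this proposition is stated as a quoted result, cited from Lemma B.1 of the reference, and no proof is given in the text. Judged on its own, your argument is a correct, self-contained proof of the lemma. The lower bound via the divergence theorem applied to $x\mapsto x$ with the incenter at the origin, together with $x\cdot\nu(x)=h_\Omega(\nu(x))\ge R_\Omega$ at $\mathcal{H}^{m-1}$-a.e.\ boundary point, immediately gives $m\,V(\Omega)\ge R_\Omega P(\Omega)$; note that the statement only asks you to \emph{exhibit} equality for balls and, when $m=2$, for circumscribed polygons (where $x\cdot\nu\equiv R_\Omega$ on every side, i.e.\ $2V=R_\Omega P$), so the converse equality-case discussion you sketch is not needed. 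For the upper bound, the coarea identity $V(\Omega)=\int_0^{R_\Omega}P(\Omega^{-t})\,dt$ is justified exactly as you anticipate: $d$ is concave and $1$-Lipschitz with $|\nabla d|=1$ a.e., for $t\in(0,R_\Omega)$ the superlevel set $\{d\ge t\}$ is a convex body whose interior is $\{d>t\}$ (concavity, compared with the incenter where $d=R_\Omega>t$, rules out interior level points), so $\partial\Omega^{-t}=\{d=t\}$ and $\mathcal{H}^{m-1}(\partial\Omega^{-t})=P(\Omega^{-t})$ for convex bodies. Monotonicity of perimeter under inclusion of convex sets then gives $V\le R_\Omega P$, and your strictness upgrade is sound: $\Omega^{-t}+B_t\subseteq\Omega$ and the Steiner formula give $P(\Omega)\ge P(\Omega^{-t}+B_t)\ge P(\Omega^{-t})+m\omega_m t^{m-1}>P(\Omega^{-t})$ for every $t>0$, which (since equality in $V\le R_\Omega P$ would force $P(\Omega^{-t})=P(\Omega)$ for a.e.\ $t$) yields the strict inequality. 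The sharpness check on thin boxes, which are thinning cylinders, is also fine.
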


Finally, we recall the following relations between the diameter, perimeter, volume, and minimal width of a convex set.

\begin{prop}\cite[Lemma 4.1]{EFT}
     For any  $\Omega \in \mathcal{C}$, with $\mathcal{C}$ defined in \eqref{open and convex}, then there exists a dimensional  constant $C=C(m)$ such that
     \begin{equation}
        \label{pmagrd}
    \diam(\Omega)\leq C \dfrac{P(\Omega)^{m-1}}{V(\Omega)^{m-2}}.
    \end{equation}
\end{prop}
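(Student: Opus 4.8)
The plan is to slice $\Omega$ by the hyperplanes orthogonal to a longest chord and to combine the Brunn--Minkowski concavity of the slice areas with the isoperimetric inequality in $\RR^{m-1}$. Since $\overline\Omega$ is compact, the diameter $d:=\diam(\Omega)$ is realised by two points $P_0,P_1\in\overline\Omega$; after a rigid motion I may assume $P_0=0$ and $P_1=d\,e_m$. The first step is to record that maximality of $d$ forces $0\le x\cdot e_m\le d$ for every $x\in\overline\Omega$ (otherwise, if for instance $x\cdot e_m<0$, an orthogonal decomposition and Pythagoras give $|x-P_1|>d$), that the two extreme slices degenerate to the single points $P_0,P_1$, and that for $t\in(0,d)$ the slice $K_t:=\Omega\cap\{x_m=t\}$ is a non-empty, $(m-1)$-dimensional, bounded, relatively open convex subset of the hyperplane $\{x_m=t\}\cong\RR^{m-1}$. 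Set $A(t):=\mathcal{H}^{m-1}(K_t)$, so that $V(\Omega)=\int_0^d A(t)\,dt$.

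Next I would bound $P(\Omega)$ from below by the slices. Applying the coarea formula on the $(m-1)$-rectifiable set $\partial\Omega$ to the $1$-Lipschitz function $x\mapsto x\cdot e_m$, and using that its tangential gradient along $\partial\Omega$ has modulus $\le 1$, one obtains
\[
\int_0^d \mathcal{H}^{m-2}\bigl(\partial\Omega\cap\{x_m=t\}\bigr)\,dt\;\le\;\mathcal{H}^{m-1}(\partial\Omega)=P(\Omega).
\]
For $t\in(0,d)$ the set $\partial\Omega\cap\{x_m=t\}$ is precisely the relative boundary of the $(m-1)$-dimensional convex body $K_t$, so the isoperimetric inequality in $\RR^{m-1}$ gives $\mathcal{H}^{m-2}(\partial K_t)\ge (m-1)\,\omega_{m-1}^{1/(m-1)}\,A(t)^{\frac{m-2}{m-1}}$ (for $m=2$ this is merely the fact that a bounded interval has two endpoints). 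Combining the two displays,
\[
P(\Omega)\;\ge\;(m-1)\,\omega_{m-1}^{1/(m-1)}\int_0^d A(t)^{\frac{m-2}{m-1}}\,dt .
\]

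To finish I would estimate the last integral from below using convexity. By Brunn--Minkowski, $K_{(1-\lambda)s+\lambda t}\supseteq(1-\lambda)K_s+\lambda K_t$, so $g(t):=A(t)^{1/(m-1)}$ is concave on $[0,d]$, and by the first step it is continuous with $g(0)=g(d)=0$. Writing $M:=\max_{[0,d]}g=g(t^*)$, concavity yields the chord bound $g(t)\ge M\min\{t/t^*,(d-t)/(d-t^*)\}$, hence $\int_0^d g(t)^{m-2}\,dt\ge \frac{M^{m-2}d}{m-1}$, while $V(\Omega)=\int_0^d g(t)^{m-1}\,dt\le M^{m-1}d$ gives $M\ge (V(\Omega)/d)^{1/(m-1)}$. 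Feeding these back, $P(\Omega)\ge \omega_{m-1}^{1/(m-1)}\,V(\Omega)^{\frac{m-2}{m-1}}\,d^{\frac{1}{m-1}}$, and raising to the power $m-1$ produces $\diam(\Omega)\le \omega_{m-1}^{-1}\,P(\Omega)^{m-1}/V(\Omega)^{m-2}$, which is the claim (with the explicit dimensional constant $C(m)=\omega_{m-1}^{-1}$).

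All the estimates above are elementary; the only points that a careful write-up must address are the measure-theoretic ones in the middle step — justifying that the coarea formula applies with the stated inequality (the slant factor being $\le 1$) and that $\partial\Omega\cap\{x_m=t\}$ coincides with the relative boundary of $K_t$ for a.e.\ $t$ — together with handling the degenerate regime $m=2$ (where $\mathcal{H}^{m-2}$ is counting measure) and checking the continuity of $g$ up to the endpoints $0$ and $d$. I expect no genuine obstacle here, only these routine verifications.
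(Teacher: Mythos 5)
Your argument is correct, and it can only be judged on its own merits here: the paper does not prove this proposition but simply imports it as \cite[Lemma 4.1]{EFT}. Your route — slicing orthogonally to a diametral chord, bounding $P(\Omega)$ from below by $\int_0^d \mathcal{H}^{m-2}(\partial K_t)\,dt$ via the coarea inequality, applying the isoperimetric inequality in $\mathbb{R}^{m-1}$ to each slice, and exploiting the Brunn--Minkowski concavity of $t\mapsto A(t)^{1/(m-1)}$ to compare $\int A^{(m-2)/(m-1)}$ with $V(\Omega)$ and $\diam(\Omega)$ — is the standard proof of this type of diameter bound in the quantitative-isoperimetry literature, and all the steps check out, including the exponent bookkeeping and the degenerate case $m=2$, where the conclusion reduces to $\diam(\Omega)\le P(\Omega)/2$. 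The points you flag as routine are indeed so: maximality of the chord forces the extreme slices $\{x_m=0\}$ and $\{x_m=d\}$ to reduce to the endpoints (any other point there would lie at distance $>d$ from the opposite endpoint), which gives $A(t)\to 0$ at the ends and hence an interior maximizer $t^*$ of the concave function $g$ (and even this could be bypassed by taking near-maximizers), while for $t\in(0,d)$ the identification of $\partial\Omega\cap\{x_m=t\}$ with the relative boundary of $K_t$ follows from openness and convexity of $\Omega$. As a bonus, your proof yields the explicit constant $C(m)=\omega_{m-1}^{-1}$, which is more than the qualitative statement requires.
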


\begin{prop}\cite[Equation (9)]{fenchel_bonnesen}
\label{prop:r>w}
Let $\Omega$ be a bounded, open and convex set of $\mathbb{R}^n$. Then
\begin{equation}
\label{eq:lowboundinradius}
    \displaystyle \frac{w(\Omega)}{2}\ge R_{\Omega}\ge\begin{cases}
    w({\Omega} )\displaystyle{\frac{\sqrt{n+2}}{2n+2}} & n \,\, \text{even}\\ \\
    w({\Omega}) \displaystyle{\frac{1}{2\sqrt{n}}} & n \,\, \text{odd}.
    \end{cases}
\end{equation}
\end{prop}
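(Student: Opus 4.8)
The plan is to treat the two estimates separately. The bound $R_\Omega \le w(\Omega)/2$ is elementary: if $B_{R_\Omega}(x_0)\subseteq\Omega$ is an inscribed ball of maximal radius, then $h_\Omega(y)\ge x_0\cdot y + R_\Omega$ for every direction $y$, so $\omega_\Omega(y)=h_\Omega(y)+h_\Omega(-y)\ge 2R_\Omega$ for all $y\in\mathbb{S}^{n-1}$, and taking the minimum over $y$ gives $w(\Omega)\ge 2R_\Omega$. The real content is the lower bound, which is Steinhagen's inequality, and this is where the parity of $n$ will enter.

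For the lower bound I would first normalise so that the largest inscribed ball is $B_R(0)$ with $R:=R_\Omega$. Maximality of this ball forces the outer unit normals $u_1,\dots,u_k$ at the contact points $\partial\Omega\cap\partial B_R$ to have the origin in their convex hull; by Carathéodory one may keep $n+1$ of them and find weights $\lambda_i>0$, $\sum_i\lambda_i=1$, with $\sum_i\lambda_i u_i=0$. Since each contact point $Ru_i$ lies on a supporting hyperplane with normal $u_i$, one has $h_\Omega(u_i)=R$ and $\Omega\subseteq T:=\{x:\ x\cdot u_i\le R,\ i=1,\dots,n+1\}$. Because the $u_i$ positively span $\mathbb{R}^n$, the polytope $T$ is a bounded simplex whose insphere is exactly $B_R(0)$, so $R_T=R$. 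Monotonicity of the width under inclusion then gives $w(\Omega)\le w(T)$, and since $R_T=R_\Omega$ it suffices to bound $w(T)/R_T$ from above over all simplices $T$.

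The remaining and decisive step is to show that this ratio is maximised by the regular simplex, and to compute it there. For the regular simplex $\Sigma$ with vertices $V_0,\dots,V_n$ satisfying $\sum_i V_i=0$ and $V_i\cdot V_j=-R_c^2/n$ for $i\ne j$, the minimal width is attained in a direction orthogonal to a pair of opposite faces: splitting the vertices into groups of sizes $p$ and $n+1-p$ and taking $e$ parallel to the difference of the two partial sums, a direct computation gives the extent $R_c\,(n+1)/\sqrt{n\,p(n+1-p)}$ along $e$. Minimising over $p$ forces the split to be as balanced as possible, and here the parity of $n$ intervenes: for odd $n$ the balanced split $p=(n+1)/2$ yields $w(\Sigma)=2R_c/\sqrt n$, while for even $n$ the nearest split $\{n/2,\ n/2+1\}$ yields $w(\Sigma)=2(n+1)R_c/(n\sqrt{n+2})$. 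Using $R_c=nR$ for the regular simplex converts these into $w/R=2\sqrt n$ and $w/R=2(n+1)/\sqrt{n+2}$ respectively, which are exactly the claimed constants.

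The main obstacle I expect is the extremality claim in the last paragraph — that among all simplices of given inradius the regular one maximises the minimal width — together with the justification that the minimal width of $\Sigma$ is indeed realised by one of the balanced face-to-face directions rather than some oblique one. I would handle the former by an optimisation (or symmetrisation) over the admissible normal configurations $\{(u_i,\lambda_i)\}$ with $\sum_i\lambda_i u_i=0$, showing that any asymmetry can be reduced while preserving the inradius and not decreasing the width; the latter rests on the symmetry group of $\Sigma$, which confines the minimal-width direction to the orthogonal complements of the opposite-face pairs. Degenerate configurations (fewer than $n+1$ contact normals, or normals failing to positively span) are dealt with by a routine approximation.
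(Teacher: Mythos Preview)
The paper does not prove this proposition at all: it is stated as a preliminary result and attributed to Bonnesen--Fenchel via the citation \cite[Equation~(9)]{fenchel_bonnesen}, with no accompanying argument. There is therefore no ``paper's own proof'' to compare your proposal against.

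As to the proposal itself, your outline is the standard route to Steinhagen's inequality: the upper bound $R_\Omega\le w(\Omega)/2$ is correct and elementary, and the reduction of the lower bound to a circumscribed simplex via the contact normals of the inscribed ball is the classical first step. Your computation of the minimal width of the regular simplex, with the parity split on $n$, is also correct and yields the stated constants. The one substantive gap you flag yourself --- that among all simplices of given inradius the regular one maximises the minimal width --- is genuinely the heart of the matter and is not something that follows from a routine symmetrisation; the original arguments (Steinhagen in the plane, Gericke and later accounts in higher dimensions) are somewhat delicate. If you want a self-contained proof you will need to either reproduce one of those arguments or cite it, rather than leave it as an ``optimisation over normal configurations''. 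But since the paper treats the inequality as a black box from the literature, supplying any proof here already goes beyond what the paper does.
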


\subsection{Shape Derivatives and Local Optimality}
Let $\mathcal{A} \subset \mathcal{P}(\mathbb{R}^m)$ be a class of sets, and let $A:\mathcal{A} \to \mathbb{R}$ be a \emph{shape functional}, that is, a functional that assigns a real value to each set in $\mathcal{A}$. 
In the following, we recall some classical properties of shape derivatives, which describe the sensitivity of $A$ with respect to infinitesimal deformations of the set (see, for instance, \cite{HP18shape, DL19}).

\begin{teorema}(\textit{Structure Theorem of first and second shape derivatives}, \cite[Theorem 5.9.2]{HP18shape}). \label{Teorema struttura}
Let $\om\subset \R^m$ be an open, bounded and $C^3$ set. Let 
\[
\mathcal{V}(\Omega) =\set{(I+\theta)\om: \theta \in W^{1,\infty}(\R^m,\R^m), \,  \norma{\theta}_{W^{1,\infty}}\leq 1},
\]
and let $A$ be a shape functional defined on $\mathcal{V}(\om)$.
Consider the function 
\[A_\om:\theta\in\{\vartheta\in W^{1,\infty}(\RR^m,\R^m): \norm{\vartheta}{W^{1,\infty}}\leq 1\}\to A((I + \theta)(\Omega))\in\R,\]
and assume that $A_\om(\theta)$ is twice Fréchet-differentiable in $0$.
Then
\begin{enumerate}[(i)]
  \item[i.] there exists a continuous linear form $\ell_1[A](\om)$ on ${C}^1(\partial\Omega)$ such that 
  \[{A}_\Omega'(0)\xi = \ell_1[A](\om)(\xi|_{\partial\Omega} \cdot n)\] for all 
  $\xi \in {C}^\infty(\mathbb{R}^m, \mathbb{R}^m)$, where ${n}$ denotes the unit exterior normal vector on $\partial\Omega$.

  \item[ii.] there exists a continuous symmetric bilinear form $\ell_2[A](\om)$ on  ${C}^2(\partial\Omega)^2$ 
  such that for all $(\xi, \zeta) \in {C}^\infty(\mathbb{R}^m, \mathbb{R}^m)^2$
  \[
 {A}_\Omega''(0)(\xi, \zeta) = \ell_2[A](\om)(\xi \cdot n, \zeta \cdot n) 
  + \ell_1[A](\om)\big(\mathbf{B}(\zeta_\tau, \xi_\tau)) 
  - \nabla_\tau (\zeta \cdot n) \cdot \xi_\tau 
  - \nabla_\tau (\xi \cdot n) \cdot \zeta_\tau\big),
  \]
  where $\nabla_\tau$ is the tangential gradient, $\xi_\tau$ and $\zeta_\tau$ are the tangential components of $\xi$ and $\zeta$, and 
$\mathbf{B} = D_\tau n$ is the second fundamental form of $\partial\Omega$.
\end{enumerate}
When $\om$ is the unit ball we drop the dependence of the set.

\end{teorema}

\begin{oss} \label{oss calcolo l2}
Let $\om$ and $A$ be as in \autoref{Teorema struttura} and let $h \in C^\infty(\partial \Omega)$. Let $\om_t$ the set defined through its boundary as 
\[\partial\om_t= \{ x + t h(x) n(x) : x \in \partial \Omega \}.\]
If we consider $a:I\subset \R\to \R$ the function defined as
\[
a(t) := A\big( \om_t \big),
\]
then 
\[
a'(0) = \ell_1[A](\Omega) \cdot h, \quad \text{ and } \quad a''(0) = \ell_2[A](\Omega)(h,h).
\]
For further details, we refer to \cite[Section $5.9.4$]{HP18shape}.
\end{oss}
Following \cite{DL19}, we recall the definitions below, which make use of the fractional Sobolev space $H^s(\partial\Omega)$ 
, understood in the sense of \cite[Section~7.3]{lions}.

\begin{definizione} \label{defoptimal}
Let $\om$ and $A$ be as in \autoref{Teorema struttura}. \begin{enumerate}
    \item $\Omega^*:=\om$ is \emph{critical shape} for $A$ if
\[
\forall h \in C^\infty(\partial \Omega^*), \quad \ell_1[A](\Omega^*) (h) = 0.
\]
\item A critical shape $\Omega^*$ is a \emph{strictly stable shape} in $H^s(\partial \Omega^*)$ (under a volume constraint and up to translations) if:

\begin{enumerate}[(a)]
    \item $\ell_2[A](\Omega^*)$ extends continuously to $H^s(\partial \Omega^*)$;
\item Let \( h \in H^s(\partial\Omega^*) \setminus \{0\} \) satisfy
\begin{itemize}
    \item
    $\displaystyle \int_{\partial\Omega^*} h \, d\mathcal{H}^{m-1} = 0,$
    which ensures that the volume of the perturbed domain
    \( (I +  h n)\Omega^* \) is preserved at first order;
    \item $\displaystyle \int_{\partial\Omega^*} h\, x \, d\mathcal{H}^{m-1} = 0,
   $
    which ensures that the barycenter of
    \( (I + t h n)\Omega^* \) is preserved at first order.
\end{itemize}
Then
\[
\ell_2[A](\Omega^*)(h,h) > 0 .
\]

For notational convenience, we denote by
\( \mathcal{T}^s(\partial\Omega^*) \) the subspace of
\( H^s(\partial\Omega^*) \) just introduced:
\begin{equation}\label{def Ts}
\mathcal{T}^s(\partial\Omega^*)
:= \Big\{
\varphi \in H^s(\partial\Omega^*) :
\int_{\partial\Omega^*} \varphi \, d\mathcal{H}^{m-1} = \int_{\partial\Omega^*} \varphi\, x \, d\mathcal{H}^{m-1} = 0
\Big\}.
\end{equation}
\end{enumerate}
\end{enumerate}
\end{definizione}

Following \cite{DL19}, we also introduce the assumptions $(\mathbf{C}_{H^{s_2}})$ and $(\mathbf{IC}_{H^s, X})$, which will be used to provide sufficient conditions for local optimality.\\

\noindent \textbf{Assumption} \Cslabel{}: for $s\in (0,1]$, we say that the bilinear form $\ell$ acting on $\mathcal{C}^\infty(\partial\Omega)$ satisfies condition \Csref (and we say that $A$ satisfies the condition at $\Omega$ if $\ell_2[A](\Omega)$ does) if:
\[
 \text{there exist } s_1 \in [0, s) \text{ and } c_1 > 0 \text{ such that } \ell = \ell_m + \ell_r \text{ with}
\]
\[
\left\{
\begin{array}{ll}
\ell_m \text{ is lower semi-continuous in } H^{s}(\partial\Omega), \\
\ell_m(\varphi, \varphi) \geq c_1 |\varphi|^2_{H^{s}(\partial\Omega)}, \quad \forall \varphi \in \mathcal{C}^\infty(\partial\Omega), \\
\ell_r \text{ continuous in } H^{s_1}(\partial\Omega),
\end{array}
\right.
\]

where $|\cdot|_{H^{s}(\partial\Omega)}$ denotes the $H^{s}(\partial\Omega)$ semi-norm. In that case, $\ell$ is naturally extended (by density) to the space $H^{s}(\partial\Omega)$.

\noindent\textbf{Assumption }\IClabel{}: \quad 
Let $X$ be a Banach space with $C^\infty \subset X \subset W^{1,\infty}$ and $s\in(0,1]$. We say that $A$ satisfy assumption \ICref  at $\om$ if  there exist $\eta > 0$ and a modulus of continuity $\omega$ such that for every $h\in X$ with $\|h\|_X \leq \eta$, and all $t \in [0,1]$:
\[
|a''(t) - a''(0)| \leq \omega(\|h\|_X)\, \|h\|_{H^s}^2,
\]
where $a : t \in [0,1] \mapsto a(\Omega_t)\in\RR$ and $\om_t$ is such that
\[
\partial \Omega_t = \{ x + t h(x)\, n(x), \; x \in \partial \Omega \}.
\]

Using the assumptions and the definitions introduced, the following theorem holds.
\begin{teorema}\cite[Theorem 1.3]{DL19}
\label{Teorema DL}
Let $\Omega^*$ be of class $C^3$, and $A$ translation-invariant and twice Fréchet differentiable in a neighbourhood of $\Omega^*$ in $W^{1,\infty}$. Assume:
\begin{enumerate}[(a)]
    \item There exists $s \in (0,1]$ and a Banach space $X$ with $C^\infty \subset X \subset W^{1,\infty}$ such that $A$ satisfies assumptions \ICref and \Csref at $\Omega^*$;
    \item $\Omega^*$ is a critical shape for $A$;
    \item $\Omega^*$ is a \emph{strictly stable shape} in $H^s(\partial \Omega^*)$.
\end{enumerate}
Then there exist $\eta>0$ and $c>0$ such that for all $h \in \mathcal{T}^s(\partial\Omega^*)$ with $\|h\|_X \le \eta$,
\[
A(\Omega_h) - A(\Omega^*) \ge c \|h\|_{H^s(\partial\Omega^*)}^2.
\]
\end{teorema}

\section{Optimization in the class of open sets}\label{section open sets}
In this section, we establish sharp lower and upper bounds for the functional \( G \) within the class \(\mathcal{O}\) of open subsets of \(\mathbb{R}^m\) having finite perimeter, defined in \eqref{openperimeter}. The obtained results are summarized in \autoref{openopt}.

\begin{proof}[Proof of \autoref{openopt}]
   Let us prove that $\displaystyle\inf_{\om\in\mathcal{O}}G(\Omega)  = 0$. Consider $\set{\Omega_n}
   $ a sequence of sets such that 
   \[\lim_n \dfrac{\Vol({\Omega_n})}{P(\Omega_n)^{\frac{m}{m-1}}}=0.\]
  Using \eqref{bound F aperti}, we obtain that
\begin{equation}\label{to0}
   0 \leq G(\Omega_n) = F(\Omega_n) \dfrac{\Vol({\Omega_n})}{P(\Omega_n)^{\frac{m}{m-1}}} \leq \dfrac{\Vol({\Omega_n})}{P(\Omega_n)^{\frac{m}{m-1}}}.   
\end{equation}
Since the right-hand side converges to zero as $n$ approaches $+\infty$, we deduce that 
\[\inf_\mathcal{O} G=0.\]
Concerning the supremum of \( G \) over the class \(\mathcal{O}\), 
we first show that \( m^{\frac{m}{m-1}}\omega_m^{-\frac{1}{m-1}} \) provides an upper bound for \(G\). This follows directly from the upper bound in \eqref{bound F aperti} and the isoperimetric inequality. Indeed, recalling
\[
F(\Omega) < 1, \qquad 
\Vol(\Omega) P(\Omega)^{-\frac{m}{m-1}} \leq m^{\frac{m}{m-1}} \omega_m^{-\frac{1}{m-1}},
\]
we have that
\begin{equation}
    \label{gminiso}
G(\Omega) = F(\Omega)\Vol(\Omega)P(\Omega)^{-\frac{m}{m-1}} 
<\Vol(\Omega)P(\Omega)^{-\frac{m}{m-1}}  \leq m^{\frac{m}{m-1}}\omega_m^{-\frac{1}{m-1}}.
\end{equation}

We now prove that this bound is indeed the supremum of \( G \), by showing that for every open, bounded and regular set \( \Omega \), there exists a sequence of sets \( \{ \Omega_n \} \) such that \( G(\Omega_n) \) converges to the isoperimetric ratio of \( \Omega \), namely 
\[
\frac{\Vol(\Omega)}{P(\Omega)^{\frac{m}{m-1}}}.
\]

We use the homogenization construction used in \cite{DMM95} (see also \cite{BBP21}), by considering a capacitary measure \( \mu \), that is, a non-negative Borel measure absolutely continuous with respect to the capacitary measure, of the form
\[
\mu = c\,\mathcal{L}^m,
\]
where \( \mathcal{L}^m \) denotes the Lebesgue measure and \( c > 0 \) is a fixed constant.  

We fix \( n \in \mathbb{N} \) and tile the space \( \R^m \) with cubes \( \{ Q_i \} \) of side length \( 1/n \).  
We consider $B_i$ the ball of radius $1/2n$  having the same 
center as $Q_i$ and $E_i$ the ball concentric to $B_i$ such that 
\[
\operatorname{cap}(E_i, B_i) = \mu(Q_i) = \frac{c}{n^m}.
\]
We consider $\Omega_n:=\Omega\setminus \bigcup_i E_i$ (as in 
Figure   \ref{figura omogenizzazione}). Then we have (see \cite{DMM95} and  \cite{BBP21} for further details), 
\begin{equation*}
   \lim_n \Lambda(\Omega_n) =\Lambda(\Omega,\mu) \qquad
    \text{and} \qquad  \lim_n T(\Omega_n) = T(\Omega,\mu),
\end{equation*}
where
\begin{equation*}
    \Lambda(\Omega,\mu) = \min\left\{ \int_\Omega \abs{\nabla u}^2 \, dx + \int_\Omega u^2 \, d\mu: \, u \in W^{1,2}_0(\Omega) \cap L^2(\Omega,\mu), \int_\Omega u^2 =1\right\},
\end{equation*}
and
\begin{equation*}
    T(\Omega,\mu) = \max\left\{ \left(\int_\Omega u\, dx\right)^2\left(\int_\Omega \abs{\nabla u}^2 \, dx + c\int_\Omega u^2 \, dx\right)^{-1}: \, u \in W^{1,2}_0(\Omega) \cap L^2(\Omega,\mu)\right\}.
\end{equation*}
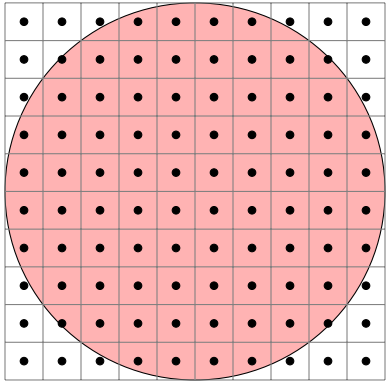
\begin{figure}[h]
    \centering
    \begin{tikzpicture}

        \fill[red!30] (2.5,2.5) circle(2.5cm);
        \draw (2.5,2.5) circle(2.5cm);
        \foreach \x in {0.25,0.75,...,4.75}
            \foreach \y in {0.25,0.75,...,4.75}
                \filldraw (\x,\y) circle(0.05cm);

                  \draw[step=0.5cm,gray,thin] (0,0) grid (5,5);
                 
    \end{tikzpicture}
        \caption{The set $\om_n$ in red}
        \label{figura omogenizzazione}
\end{figure}

By definition of $\Lambda(\Omega,\mu)$ and the fact that $\mu=c\mathcal{L}^m$, we have that \[\Lambda(\Omega,\mu)= c + \Lambda(\Omega).\]
Moreover, for $\delta > 0$, let us consider the function $u_\delta = \varphi(d(x,\partial \Omega))$, where
\[
\varphi(t)= 
\begin{cases}
    \dfrac{t}{\delta}, & \text{if } t < \delta,\\[6pt]
    1, & \text{if } t \geq \delta.
\end{cases}
\]
Denoting by $\Omega_\delta = \{ x \in \Omega : d(x, \partial \Omega) > \delta \}$ and using $u_\delta$ as test function in $T(\Omega,\mu)$, one can show that
\[
T(\Omega,\mu) \geq 
\frac{ \Vol(\Omega_\delta)^2 }{ \Vol(\Omega) } \,
\frac{1}{ \delta^{-2} + c }.
\]

Moreover, denoting by $r$ the radius of $E_i$, we have the following:
\begin{description}
    \item[Case $m = 2$]
$$\dfrac{c}{n^2}=\mu(Q_i)=cap(E_i,B_i)=\dfrac{2\pi }{\log\left(\dfrac{1}{2nr}\right)},$$

which gives

$$ r =\dfrac{1}{2n}e^{-\frac{2\pi n^2}{c}}.$$

    \item[Case $m\geq 3$]
    
$$\dfrac{c}{n^m}=\mu(Q_i)=cap(E_i,B_i)=\dfrac{m\omega_m(m-2)}{r^{2-m}-\left(\dfrac{1}{2n}\right)^{2-m}},$$
which gives

$$ r =\bigg(\dfrac{1}{n}\bigg)^{\frac{m}{m-2}} \left(\dfrac{c}{\omega_m(m-2)+ 2^{m-2}n^{-2}}\right)^{\frac{1}{m-2}} .$$
\end{description}

At this point we estimate the perimeter of the perforated set $\Omega_n$ in terms of the perimeter of the original domain and the perimeters of the removed balls. 
The perimeter of $\Omega_n$ can be controlled by the sum of the perimeter of $\Omega$ and the perimeter of the holes. 
More precisely, using the fact that each ball $E_i$ contributes at most its own perimeter, we obtain

$$P(\om_n)\leq P(\Omega)+\sum_{i : \, E_i \cap \Omega \ne \emptyset}P(E_i)\leq P(\Omega)+\sum_{i : \, Q_i \cap \Omega \ne \emptyset}P(E_i)= P(\Omega)+ C_n({\Omega})n^m m \omega_m r^{m-1}$$
where $C_n(\Omega) n^m$ denotes the number of cubes $Q_i$ that intersect $\Omega$. 
Since $C_n(\Omega)$ is a Riemann sum of the indicator function of $\Omega$ 
associated with the covering $\{Q_i : Q_i \cap \Omega \neq \emptyset\}$, 
it follows that $C_n(\Omega)$ converges to $\Vol(\Omega)$ as $n \to +\infty$.

Moreover, in any dimension $n^m r^{m-1}$ converges to $0$ as $n$  approaches $+\infty$, hence we get
\begin{equation}
\label{gmagiso}
    \liminf_{n \to \infty}G(\Omega_n) = \liminf_{n \to \infty}\dfrac{\Lambda(\Omega_n)T(\Omega_n)}{P(\Omega_n)^{\frac{m}{m-1}}} \geq \dfrac{\Lambda(\Omega,\mu)T(\Omega,\mu)}{P(\Omega)^{\frac{m}{m-1}}} \geq (c + \Lambda(\Omega))\dfrac{\Vol({\Omega_\delta})^2}{\Vol({\Omega})}\dfrac{1}{\delta^{-2} +c}\dfrac{1}{P(\Omega)^{\frac{m}{m-1}}}.
\end{equation}
By using \eqref{gminiso} and \eqref{relationgf}, and by  letting $c \to \infty$ and $\delta \to 0$, we obtain $$\frac{V(\Omega)}{P(\Omega)^{\frac{m}{m-1}}}\geq \limsup_{n \to \infty} G(\Omega_n)\geq \liminf_{n \to \infty} G(\Omega_n) \geq \frac{V(\Omega)}{P(\Omega)^{\frac{m}{m-1}}} .
$$

Choosing $\Omega = B_1$ we obtain the thesis. The supremum is not attained, since the inequality in \eqref{gminiso} is strict.
\end{proof}

\section{Optimization in the class of convex sets}\label{optimization convex sets}
In this section, we investigate the existence of a maximizer for the functional $G$ when the admissible class is restricted to $\mathcal{C}$ defined in \eqref{open and convex}. 
\begin{proof}[Proof of \autoref{esistenza maX G}]
In \autoref{openopt}, we showed that if $\{\Omega_n\}\subset \mathcal{O} $ is a sequence such that 
\[
\frac{\Vol(\Omega_n)}{P(\Omega_n)^{\frac{m}{m-1}}} \to 0,
\]
then $G(\Omega_n) \to 0$.

By combining inequalities \eqref{pmagrd}, \eqref{measurepr}, and \eqref{eq:lowboundinradius}, we obtain, for any convex set,
\begin{equation*}
\begin{split}
    \diam(\Omega_n)
    &\leq C(m) \frac{P(\Omega_n)^{m-1}}{\Vol(\Omega_n)^{m-2}} 
    = C(m) \frac{\Vol(\Omega_n)}{P(\Omega_n)} \frac{P(\Omega_n)^{m}}{\Vol(\Omega_n)^{m-1}}  \\
    &\leq C(m) R_{\Omega_n} \left(\frac{P(\Omega_n)^{\frac{m}{m-1}}}{\Vol(\Omega_n)} \right)^{m-1}
    \leq C'(m) w(\Omega_n) \left(\frac{P(\Omega_n)^{\frac{m}{m-1}}}{\Vol(\Omega_n)} \right)^{m-1}.
\end{split}
\end{equation*}

Hence,
\begin{equation}\label{vol  per thin}
    \frac{\Vol(\Omega_n)}{P(\Omega_n)^{\frac{m}{m-1}}} 
\leq C'(m) \left( \frac{w(\Omega_n)}{\diam(\Omega_n)}\right)^{\frac{1}{m-1}}.
\end{equation}

Therefore, if $\{\Omega_n\}\subset \mathcal{C}$ is a sequence of thinning sets (see \autoref{thinset}), then 
$$\lim_n G(\Omega_n) = 0.$$

Next step is to prove  the well-posedness of the maximization problem. 
To this aim, let $\{\Omega_n\}$ be  a maximizing sequence, such that $P(\Omega_n)=1$ $\forall n \in \mathbb{N}$. 
By \eqref{to0}, $\Vol(\om_n)$ necessary converges, up to a subsequence, to a positive constant $c$.

Inequality \eqref{pmagrd} gives us a uniform bound on the diameter of $\Omega_n$, i.e.
$$\diam(\Omega_n) \leq C(m) \dfrac{P(\Omega_n)^{m-1}}{\Vol({\Omega_n})^{m-2}} \leq \dfrac{2^{m-1}C(m)}{c^{m-1}},$$
hence $\Omega_n$, up to translations, is included in the ball of center 0 and radius $\dfrac{2^{m-1}C(m)}{c^{m-1}}.$
Therefore the sequence $\Omega_n$ has a Hausdorff limit $\tilde{\Omega}\in \mathcal{C}$ and
\begin{enumerate}
    \item $\displaystyle\lim_n\Lambda(\Omega_n) = \Lambda(\tilde{\Omega})$;
     \item $\displaystyle\lim_nT(\Omega_n) = T(\tilde{\Omega})$;
    \item $\displaystyle\lim_n P(\Omega_n) = P(\tilde{\Omega})$.
\end{enumerate}

Thus, from
$$\sup_{\Omega \in \mathcal{C}} G(\Omega)= \lim_n G(\Omega_n) = \lim_n \dfrac{\Lambda(\Omega_n)T(\Omega_n)}{P(\Omega_n)} =\dfrac{\Lambda(\tilde{\Omega})T(\tilde{\Omega})}{P(\tilde{\Omega})} \leq \sup_{\Omega \in \mathcal{C}} G(\Omega),$$
we get the thesis.
\end{proof}

In the following we show that, in the planar case \(m=2\), every maximizer of \(G\) within the class \(\mathcal C\) has a \(C^1\) boundary.

\begin{proof}[Proof of \autoref{regolarità}]
Let $\om\in\mathcal{C}$ be such that 
\[G(\om)=\max_{A\in \mathcal{C}}G(A).\]
Assume by contradiction that $\om$ is not of class $C^1$. 
Since \( \Omega \) is Lipschitz, this implies that \( \partial \Omega \) has at least one corner point. Without loss of generality, we can assume that the corner point is at the origin $O$, and we denote by $\alpha$ the angle of the corner and $\nu$ the versor parallel to its bisector.

We consider a similar construction as the one made in \cite[Lemma 3.3.2]{henrot2006extremum}, by considering, for all  $\varepsilon>0$, the set   
$C_\varepsilon=\{x\in\om: x\cdot \nu\leq \varepsilon\}$ and its complementary in $\om$, named $\om_\varepsilon=\om\setminus C_\varepsilon$.

Our aim is to reach a contradiction by proving that $G(\om_\varepsilon)>G(\om)$.

We start by estimating $T(\om_\varepsilon)$ from below.
Let \( w \) be the solution to the torsion problem \eqref{torsion problem}, then
\[T(\om) = \int_{\Omega} w\,dx.\]
Let \( \eta \in C^\infty_c(\mathbb{R}^2) \) be a cut-off function such that \( \eta = 0 \) in \( C_\varepsilon \), \( \eta = 1 \) outside \( C_{2\varepsilon} \),  \( Im(\eta)\subseteq [0,1] \) and  \( |\nabla \eta| \leq \dfrac{C}{\varepsilon} \) for some $C>0$.

Consider \( \widetilde{w} := w \eta\in H^1_0(\om_\varepsilon) \). Using $\widetilde{w}$ as a test function in the variational characterization of the torsion \eqref{variationa characteriz torsion}, we obtain
\begin{equation}\label{stima torsione}
   T(\Omega_\varepsilon) \geq \dfrac{\left( \int_{\Omega_\varepsilon} \widetilde{w}\,dx \right)^2}{\int_{\Omega_\varepsilon} |\nabla \widetilde{w}|^2\,dx}. 
\end{equation}

We recall that \( |\nabla w(0,0)| = 0 \) (for instance, see \cite{dauge2006elliptic}). Hence, If $\varepsilon$ is small enough 
\[
|w(x)| \leq  |x| \quad \text{in } C_{2\varepsilon}.
\]
Then for some constant $c_1>0$, 
\[
\int_{C_{2\varepsilon}} |w|\,dx \leq  c_1 \varepsilon^3.
\]
Thus, 
\[
T(\Omega) = \int_\Omega w\,dx = \int_{\Omega \setminus C_{2\varepsilon}} w \,dx+ \int_{C_{2\varepsilon}} w \,dx\leq \int_{\Omega_\varepsilon} \widetilde{w}\,dx +  c_1\varepsilon^3,
\]
and 
\[
\int_{\Omega_\varepsilon} \widetilde{w}\,dx \geq T(\Omega) - c_1 \varepsilon^3.
\]

Using \eqref{stima torsione}, we have
\[
T(\Omega_\varepsilon) \geq \dfrac{(T(\Omega) - c_1\varepsilon^3)^2}{\int_{\Omega_\varepsilon} |\nabla \widetilde{w}|^2\,dx}.
\]

We estimate the denominator:
\[
\int_{\Omega_\varepsilon} |\nabla \widetilde{w}|^2\,dx \leq \int_\Omega |\nabla w|^2\,dx + \int_{C_{2\varepsilon}} (|\nabla \eta|^2 {w}^2 +2\abs{\nabla w}\abs{ \nabla \eta} w)\,dx.
\]
Since \( |\nabla \eta|  \leq \dfrac{C}{\varepsilon} \) and \( \abs{\nabla w} \) is bounded, we have
\begin{equation*}
    \begin{aligned}
        \int_{C_{2\varepsilon}} (|\nabla \eta|^2 {w}^2 +2\abs{\nabla w}\abs{ \nabla \eta} w)\,dx &\leq \dfrac{C}{\varepsilon^2} \int_{C_{2\varepsilon}} w^2 \,dx  + \dfrac{C}{\varepsilon} \int_{C_{2\varepsilon}} w \,dx \\&\leq \dfrac{C }{\varepsilon^2} \int_{C_{2\varepsilon}} |x|^2\,dx + \dfrac{C}{\varepsilon} \int_{C_{2\varepsilon}} |x||\nabla w|\,dx \leq c_2 \varepsilon^2.
    \end{aligned}
\end{equation*}

Hence, it holds 
\begin{equation}\label{stima torisione1}
    T(\Omega_\varepsilon) \geq \dfrac{(T(\Omega) - c_1  \varepsilon^3)^2}{T(\Omega) + c_2 \varepsilon^2}.
\end{equation}

Now we want to estimate from above the perimeter of $\Omega_\varepsilon$. Firstly, we observe that $\partial C_\varepsilon$ intersects $\partial \Omega$ at the origin and in two other points $A$ and $B$. 

Then we construct the triangle $T_\varepsilon = \triangle OAB \subset \Omega$ and the point $M$ that is the intersection between the side $\overline{AB}$ and the bisector of $\alpha$, see \autoref{figura regolarità}. Let 
\[
\beta = \angle AOB, \qquad \beta_1 = \angle AOM, \qquad \beta_2 = \angle MOB,
\] 
so that
\[
\beta \leq \alpha, \qquad \beta_1 \leq \frac{\alpha}{2}, \qquad \beta_2 \leq \frac{\alpha}{2}.
\]
Moreover, the lengths of the sides and segments are given by
\[
\overline{OA} = a = \frac{\varepsilon}{\cos \beta_1}, \qquad
\overline{OB} = b = \frac{\varepsilon}{\cos \beta_2}, \qquad
\overline{AB} = c = \varepsilon (\tan \beta_1 + \tan \beta_2), \qquad
\overline{OM} = \varepsilon.
\]

\tikzset{every picture/.style={line width=0.75pt}} 
\begin{figure}[H]
    \begin{center}
\begin{tikzpicture}[x=0.75pt,y=0.7pt,yscale=-0.7,xscale=0.9]

\draw  [color={rgb, 255:red, 208; green, 2; blue, 27 }  ,draw opacity=1 ] (327,35) -- (596.5,306.5) -- (57,306.5) -- cycle ;
\draw    (141,335) .. controls (147,323) and (282.5,80.5) .. (326.4,35.2) ;
\draw    (327,35) .. controls (356.67,61) and (465,344) .. (468,355) ;
\draw  [dash pattern={on 0.84pt off 2.51pt}]  (327,35) -- (328,306) ;
\draw  [color={rgb, 255:red, 74; green, 144; blue, 226 }  ,draw opacity=1 ] (327,35) -- (449,306.5) -- (157,306.5) -- cycle ;
\draw  [draw opacity=0] (355.11,98.34) .. controls (332.47,105.41) and (309.25,103.24) .. (290.12,93.93) -- (331.7,22.41) -- cycle ; \draw  [color={rgb, 255:red, 74; green, 144; blue, 226 }  ,draw opacity=1 ] (355.11,98.34) .. controls (332.47,105.41) and (309.25,103.24) .. (290.12,93.93) ;  
\draw  [draw opacity=0] (358.54,67.03) .. controls (348.49,72.3) and (337.69,75.23) .. (326.41,75.35) .. controls (314.97,75.48) and (303.96,72.72) .. (293.69,67.54) -- (324.77,-71.28) -- cycle ; \draw  [color={rgb, 255:red, 208; green, 2; blue, 27 }  ,draw opacity=1 ] (358.54,67.03) .. controls (348.49,72.3) and (337.69,75.23) .. (326.41,75.35) .. controls (314.97,75.48) and (303.96,72.72) .. (293.69,67.54) ;  
\draw  [draw opacity=0] (327.55,128.81) .. controls (327.45,128.81) and (327.34,128.81) .. (327.23,128.81) .. controls (300.61,128.99) and (279.82,116.45) .. (266.7,95.34) -- (371.85,-25.46) -- cycle ; \draw  [color={rgb, 255:red, 208; green, 2; blue, 27 }  ,draw opacity=1 ] (327.55,128.81) .. controls (327.45,128.81) and (327.34,128.81) .. (327.23,128.81) .. controls (300.61,128.99) and (279.82,116.45) .. (266.7,95.34) ;  
\draw  [draw opacity=0] (371.88,136.91) .. controls (355.9,143.43) and (340.05,144.29) .. (326.31,140.04) -- (362.7,53.06) -- cycle ; \draw  [color={rgb, 255:red, 74; green, 144; blue, 226 }  ,draw opacity=1 ] (371.88,136.91) .. controls (355.9,143.43) and (340.05,144.29) .. (326.31,140.04) ;  
\draw  [draw opacity=0] (326.07,161.66) .. controls (302.14,164.96) and (277.75,159.38) .. (257.78,147.64) -- (302.7,86.06) -- cycle ; \draw  [color={rgb, 255:red, 74; green, 144; blue, 226 }  ,draw opacity=1 ] (326.07,161.66) .. controls (302.14,164.96) and (277.75,159.38) .. (257.78,147.64) ;  

\draw (234,185.15) node [anchor=north west][inner sep=0.75pt]    {$\textcolor[rgb]{0.29,0.56,0.89}{a}$};
\draw (383,185.15) node [anchor=north west][inner sep=0.75pt]    {$\textcolor[rgb]{0.29,0.56,0.89}{b}$};
\draw (345,293.15) node [anchor=north west][inner sep=0.75pt]    {$\textcolor[rgb]{0.29,0.56,0.89}{c}$};
\draw (311,250.15) node [anchor=north west][inner sep=0.75pt]    {$\varepsilon $};
\draw (312.75,75.4) node [anchor=north west][inner sep=0.75pt]    {$\textcolor[rgb]{0.82,0.01,0.11}{\alpha }$};
\draw (315.25,103.1) node [anchor=north west][inner sep=0.75pt]    {$\textcolor[rgb]{0.29,0.56,0.89}{\beta }$};
\draw (277.25,116.9) node [anchor=north west][inner sep=0.75pt]    {$\textcolor[rgb]{0.82,0.01,0.11}{{\textstyle \dfrac{\alpha }{2}}}$};
\draw (291.25,162.4) node [anchor=north west][inner sep=0.75pt]    {$\textcolor[rgb]{0.29,0.56,0.89}{\beta }\textcolor[rgb]{0.29,0.56,0.89}{_{1}}$};
\draw (346.75,144.4) node [anchor=north west][inner sep=0.75pt]  [color={rgb, 255:red, 74; green, 144; blue, 226 }  ,opacity=1 ]  {$\beta _{2}$};
\draw (340,26.9) node [anchor=north west][inner sep=0.75pt]    {$O=(0,0)$};
\draw (157.25,306.9) node [anchor=north west][inner sep=0.75pt]    {$A$};
\draw (455.25,306.9) node [anchor=north west][inner sep=0.75pt]    {$B$};
\draw (320,306.9) node [anchor=north west][inner sep=0.75pt]    {$M$};

\end{tikzpicture}
 \caption{$T_\varepsilon$ in blue}
    \label{figura regolarità}
     \end{center}
\end{figure}
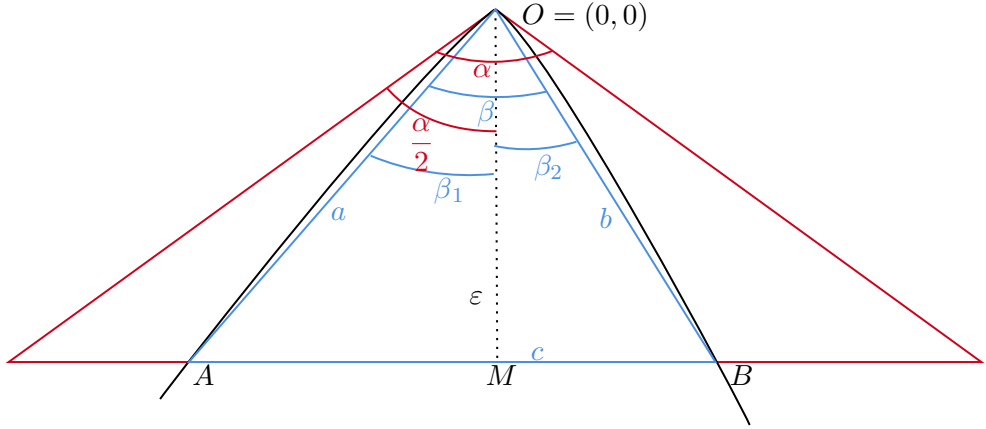

Hence\[P(\om)-P(\om_\varepsilon)\geq a+b-c=\varepsilon\left[\dfrac{1}{\cos(\beta_1)}(1-\sin(\beta_1)+\dfrac{1}{\cos(\beta_2)}(1-\sin(\beta_2))\right]\]
\[\geq \varepsilon(2-\sin(\beta_1)-\sin(\beta_2))=2\varepsilon\left[1-\sin\left(\dfrac{\beta_1+\beta_2}{2}\right)\cos\left(\dfrac{\beta_1-\beta_2}{2}\right)\right]\geq 2\varepsilon\left[1-\sin\left(\dfrac{\beta}{2}\right)\right].\]
Therefore, we have that 
\begin{equation}\label{stima perimetro}
    P(\om_\varepsilon)\leq P(\om)\left(1-\dfrac{2\varepsilon}{P(\om)}\left(1-\sin\left(\dfrac{\beta}{2}\right)\right)\right).
\end{equation}

Then, using the monotonicity of $\Lambda$, \eqref{stima torisione1} and \eqref{stima perimetro}, we deduce 
\[
G(\Omega_\varepsilon) \geq \Lambda(\Omega) \dfrac{T(\Omega_\varepsilon)}{P(\Omega_\varepsilon)^2} \geq \Lambda(\Omega) \dfrac{T(\Omega)}{P(\Omega)^2}  \dfrac{\left(1 - \dfrac{c_1}{T(\Omega)}  \varepsilon^3\right)^2}{\left(1 + \dfrac{c_2 \varepsilon^2}{T(\Omega)}\right) \left(1 - \dfrac{2\varepsilon}{P(\Omega)}\left(1 - \sin\left( \dfrac{\beta}{2} \right)\right) \right)^2}.
\]
Using Bernulli inequality, we obtain 
\[G(\om_\varepsilon)\geq G(\om)\left(1 - \dfrac{c_1}{T(\Omega)}  \varepsilon^3\right)^2\left(1 - \dfrac{c_2  \varepsilon^2}{T(\Omega)}\right) \left(1 + \dfrac{4\varepsilon}{P(\Omega)}\left(1 - \sin\left( \dfrac{\beta}{2} \right)\right) \right),\]

Thus, we have that

\[
G(\Omega_\varepsilon) \geq G(\Omega) \left( 1 + \dfrac{4\varepsilon}{P(\Omega)} \left(1 - \sin\left( \dfrac{\beta}{2} \right) \right) + o(\varepsilon) \right).
\]
Hence, for \( \varepsilon \) small enough, \( G(\Omega_\varepsilon) > G(\Omega) \), contradicting the maximality of \( \Omega \). This contradiction implies that \( \partial \Omega \) cannot have a corner point, thus \( \Omega \) is of class \( C^1 \).

\end{proof}

\section{Optimality of the ball for nearly spherical domains}\label{optimization nearly spherical}

In this section, we prove \autoref{Optimality of the ball} and \autoref{teor F_q}. 
To this end, we show that both $F_q$ and $G$ satisfy the assumptions of \autoref{Teorema DL} when evaluated on the ball. 
Once this is established, the desired results will follow directly. 

Using the notation introduced in \autoref{Teorema struttura} and setting $X = C^{2,\gamma}$ with $\gamma>0$, it is well known (see, for instance, \cite[Chapter 5]{HP18shape}) that 
$T_{B_1}$, $\Lambda_{B_1}$, $P_{B_1}$, and $\Vol_{B_1}$ are twice differentiable at $0$. 
As a consequence, the same differentiability property holds for ${F_q}_{B_1}$ and $G_{B_1}$.

\subsection{On the improved continuity condition}

Let $\Omega \subset \mathbb{R}^m$ be a fixed domain of class $C^3$. 
The purpose of this section is to show that the functionals $F_q$ 
satisfy assumption \ICref{} with $s = \tfrac{1}{2}$ and 
$X = C^{2,\gamma}(\partial \Omega)$, with $\gamma > 0$, 
while $G$ satisfies \ICref{} with $s = 1$ and the same choice of $X$.

Since $F_q$ and $G$ are defined in terms of functionals that verify assumption 
\ICref  for suitable $s$ and $X$, it suffices to prove that, 
under appropriate conditions, functionals constructed in this way 
inherit the same property.

We stress that, in order to apply \autoref{Teorema DL} at the end, we will only need to consider 
the case $\Omega = B_1$. Therefore, the assumption $\Omega \in C^3$ is not restrictive.

\begin{lemma}\label{lemma der prime ic}
Let $\om\subset \R^m$ be an open and bounded set. Let 
\[
\mathcal{V}(\Omega) =\set{(I+\theta)\om\in W^{1,\infty}(\RR^m,\R^m): \norma{\theta}_{W^{1,\infty}}<1},
\]
and let $A$ be a shape functional defined on $\mathcal{V}(\om)$.

Moreover, let $X \subset Y \subset W^{1,\infty}(\partial \Omega)$ be two normed spaces, and assume that the immersion
\[
X \hookrightarrow Y
\]
is continuous.

Assume that there exists $\eta > 0$ such that, for all $h \in X$ with $\|h\|_X \leq \eta$ and for all $t \in [0,1]$, the perturbed domain
\[
\partial \Omega_t = \{\, x + t\, h(x)\, n(x) \; : \; x \in \partial \Omega \,\}
\]
is well-defined. We then set
\[
a(t) = A(\Omega_t).
\]

Moreover, we assume that the following properties hold.
\begin{enumerate}
    \item $a$ is twice differentiable on $[0,1]$;
    \item there exists a constant $c > 0$ such that 
    \[
    |a'(0)| \leq c\, \|h\|_{Y};
    \]
    \item there exists a constant $c > 0$ such that 
    \[
    |a''(0)| \leq c\, \|h\|_{Y}^2;
    \]
    \item there exists a modulus of continuity $\omega$ such that
    \[
    |a''(t) - a''(0)| 
    \leq \omega(\|h\|_X)\, \|h\|_{Y}^2,
    \qquad \forall t \in [0,1].
    \]
\end{enumerate}

Then there exist a modulus of continuity $\omega$ and a constant $k > 0$ such that, for every $t \in [0,1]$,
\[
|a'(t) - a'(0)| \leq \omega(\|h\|_X)\, \|h\|_{Y}, 
\qquad
|a'(t)| \leq k\, \|h\|_{Y},
\qquad
|a(t) - a(0)| \leq k\, \|h\|_{Y}.
\]

\end{lemma}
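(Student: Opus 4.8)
The plan is to derive everything from Taylor's theorem with integral remainder applied to $a$ on $[0,1]$, starting from the two endpoint bounds (2), (3) and the uniform modulus-of-continuity control (4) on the second derivative. The key observation is that $|a''(t)| \le |a''(0)| + |a''(t)-a''(0)| \le c\,\|h\|_Y^2 + \omega(\|h\|_X)\,\|h\|_Y^2$, and since $\|h\|_X \le \eta$ we may assume $\omega(\|h\|_X) \le \omega(\eta)$, so $|a''(t)| \le (c + \omega(\eta))\,\|h\|_Y^2 =: c'\,\|h\|_Y^2$ uniformly in $t \in [0,1]$. This uniform bound on $a''$ is the engine for all three conclusions.

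First I would prove the estimate on $a'(t) - a'(0)$: by the fundamental theorem of calculus, $a'(t) - a'(0) = \int_0^t a''(s)\,ds = \int_0^t \big(a''(s) - a''(0)\big)\,ds + t\,a''(0)$, hence $|a'(t) - a'(0)| \le \int_0^t \omega(\|h\|_X)\,\|h\|_Y^2\,ds + |a''(0)| \le \big(\omega(\|h\|_X) + c\big)\|h\|_Y^2$. Now I use the continuity of the immersion $X \hookrightarrow Y$: there is a constant $\kappa$ with $\|h\|_Y \le \kappa\,\|h\|_X$, so $\|h\|_Y^2 \le \kappa\,\|h\|_X\,\|h\|_Y$, and therefore $|a'(t) - a'(0)| \le \kappa\big(\omega(\|h\|_X) + c\big)\,\|h\|_X\,\|h\|_Y$. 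Defining the new modulus of continuity $\tilde\omega(r) := \kappa\big(\omega(r) + c\big)\,r$ (which tends to $0$ as $r \to 0$ since $\omega$ does and the factor $r$ is what makes it a modulus), we get $|a'(t) - a'(0)| \le \tilde\omega(\|h\|_X)\,\|h\|_Y$, as required.

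For the second estimate, the triangle inequality gives $|a'(t)| \le |a'(0)| + |a'(t) - a'(0)| \le c\,\|h\|_Y + \tilde\omega(\|h\|_X)\,\|h\|_Y \le \big(c + \tilde\omega(\eta)\big)\,\|h\|_Y$, using $\|h\|_X \le \eta$ and monotonicity of $\tilde\omega$; set $k_1 := c + \tilde\omega(\eta)$. For the third estimate, integrate once more: $a(t) - a(0) = \int_0^t a'(s)\,ds$, so $|a(t) - a(0)| \le \int_0^t |a'(s)|\,ds \le k_1\,\|h\|_Y$. Taking $k := \max\{k_1, k_1\}= k_1$ (or simply the larger of the two constants produced) yields all three stated inequalities.

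The argument is essentially a soft, routine application of Taylor expansion and the continuous embedding, so I do not expect a genuine obstacle; the only point requiring a little care is the bookkeeping of the modulus of continuity — specifically, verifying that $r \mapsto \kappa(\omega(r)+c)\,r$ is indeed a legitimate modulus of continuity (non-decreasing, vanishing at $0$), which follows because $\omega$ is bounded on $[0,\eta]$ and the extra factor $r$ forces the product to $0$. One should also note at the outset that the constants $c$ in hypotheses (2), (3), (4) need not be the same, but one may harmlessly take their maximum; and that all estimates are only claimed for $\|h\|_X \le \eta$, which is exactly what lets us replace $\omega(\|h\|_X)$ by the finite value $\omega(\eta)$ wherever a uniform bound (rather than a decaying one) is needed.
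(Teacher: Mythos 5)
Your argument is correct and is essentially the same as the paper's: both control $a''$ uniformly on $[0,1]$ via hypotheses (3)--(4), pass to $a'-a'(0)$ (the paper by the mean value theorem, you by integrating $a''$, which is equivalent here since $a''$ is bounded so $a'$ is Lipschitz), and then iterate down to $a'$ and $a$ by the triangle inequality and one more integration/MVT step. Your explicit use of the embedding constant $\kappa$ with $\|h\|_Y^2\le\kappa\|h\|_X\|h\|_Y$ to define the new modulus $\tilde\omega(r)=\kappa(\omega(r)+c)r$ is exactly the step the paper leaves implicit in its "$=\omega_1(\|h\|_X)\|h\|_Y$", so no gap.
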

\begin{proof}
Fix $t\in[0,1]$, then there exists $\xi_t\in (0,t)$ such that 
    \[|a'(t)-a'(0)|\leq |a''(\xi_t)|\leq |a''(\xi_t)-a''(0)|+|a''(0)|\leq \omega(\norm{h}{X})\norm{h}{Y}^2+c\norm{h}{Y}^2=\omega_1(\norm{h}{X} )\norm{h}{Y}.\]
    Moreover, we have that 
    \[|a'(t)|\leq |a'(t)-a'(0)|+|a'(0)|\leq k \norm{h}{Y}.\]
    Finally, for all $t\in[0,1]$ there exists $\zeta_t\in(0,t)$ such that 
    \[|a(t)-a(0)|\leq |a'(\zeta_t)|\le k\norm{h}{Y}.\]
\end{proof}

\begin{lemma}\label{lemma con f}
Let $\Omega$, $\mathcal{V}(\Omega)$, $X$, $Y$ $\eta$ and $\om_t$ be as in the hypothesis of \autoref{lemma der prime ic}, and let 
$\{A_i\}_{i=1}^n$ be a family of shape functionals defined on $\mathcal{V}(\Omega)$.

We set
\[
a_i(t) = A_i(\Omega_t), \qquad i = 1, \dots, n.
\]
Moreover, we assume that the following properties hold for every $i = 1, \dots, n$:
\begin{enumerate}
    \item $a_i$ is twice differentiable on $[0,1]$;
    \item there exists a constant $c > 0$ such that 
    \[
    |a_i'(0)| \leq c\, \|h\|_{Y};
    \]
    \item there exists a constant $c > 0$ such that 
    \[
    |a_i''(0)| \leq c\, \|h\|_{Y}^2;
    \]
    \item there exists a modulus of continuity $\omega$ such that
    \[
    |a_i''(t) - a_i''(0)| 
    \leq \omega(\|h\|_X)\, \|h\|_{Y}^2,
    \qquad \forall t \in [0,1].
    \]
\end{enumerate}

\medskip

Let $U \subset \mathbb{R}^n$ be an open set such that 
\[
(A_1(O), \dots, A_n(O)) \in U 
\quad \text{for all } O \in \mathcal{V}(\Omega),
\]
and let $f \in C^{2,\alpha}(U)$ with $\alpha > 0$.  
Define the functional
\[
A : \mathcal{V}(\Omega) \to \mathbb{R},
\qquad
A(O) = f\bigl(A_1(O), \dots, A_n(O)\bigr).
\]

Then the function 
\[
a : [0,1] \to \mathbb{R}, 
\qquad 
a(t) = A(\Omega_t),
\]
satisfies properties (1)–(4) above.
\end{lemma}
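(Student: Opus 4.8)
The plan is to reduce the multivariable statement to the single–variable case already contained in \autoref{lemma der prime ic} by a direct computation using the chain rule. Write $\mathbf{a}(t):=(a_1(t),\dots,a_n(t))$, so that $a(t)=f(\mathbf{a}(t))$. Since each $a_i$ is twice differentiable on $[0,1]$ and $f\in C^{2,\alpha}(U)$, the composition $a$ is twice differentiable on $[0,1]$, which gives property (1). Differentiating,
\[
a'(t)=\sum_{i=1}^n \partial_i f(\mathbf{a}(t))\,a_i'(t),
\]
\[
a''(t)=\sum_{i,j=1}^n \partial_{ij}^2 f(\mathbf{a}(t))\,a_i'(t)a_j'(t)+\sum_{i=1}^n \partial_i f(\mathbf{a}(t))\,a_i''(t).
\]
First I would record the boundedness facts that make everything uniform: by hypothesis (2)--(3) on each $a_i$, together with the conclusions of \autoref{lemma der prime ic} applied to each $A_i$ (namely $|a_i(t)-a_i(0)|\le k\|h\|_Y$ and $|a_i'(t)|\le k\|h\|_Y$ for $t\in[0,1]$), the curve $\mathbf{a}([0,1])$ stays in a fixed compact subset $K\subset U$ once $\|h\|_X\le\eta$ is small enough (shrinking $\eta$ if necessary, using continuity of $X\hookrightarrow Y\hookrightarrow W^{1,\infty}$ and $\mathbf{a}(0)\in U$). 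On $K$ the functions $\partial_i f$, $\partial_{ij}^2 f$ are bounded and $\partial_{ij}^2 f$ is $\alpha$-Hölder; call the common bound $M$.

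Next I would verify (2)--(3) for $a$ at $t=0$. From the displayed formula for $a'$, $|a'(0)|\le M\sum_i |a_i'(0)|\le nMc\,\|h\|_Y$, giving (2). Similarly, from the formula for $a''$, using $|a_i'(0)|\le c\|h\|_Y$ and $|a_i''(0)|\le c\|h\|_Y^2$, we get $|a''(0)|\le n^2 M c^2\|h\|_Y^2+nMc\|h\|_Y^2=C\|h\|_Y^2$, which is (3).

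The only slightly delicate point — and the main obstacle, though a mild one — is property (4), the modulus-of-continuity estimate on $a''(t)-a''(0)$. I would split the difference $a''(t)-a''(0)$ into three groups and bound each by something of the form $\omega(\|h\|_X)\|h\|_Y^2$: (a) the terms $\sum_{ij}\big(\partial_{ij}^2 f(\mathbf a(t))-\partial_{ij}^2 f(\mathbf a(0))\big)a_i'(0)a_j'(0)$, controlled by Hölder continuity of $\partial_{ij}^2 f$ and $|\mathbf a(t)-\mathbf a(0)|\le \sqrt n\,k\|h\|_Y\le \sqrt n\,k\,c_0\|h\|_X$ (with $c_0$ the embedding constant), so this contributes $M(\sqrt n k c_0\|h\|_X)^\alpha\cdot n c^2\|h\|_Y^2$; (b) the terms involving $a_i'(t)a_j'(t)-a_i'(0)a_j'(0)=a_i'(t)(a_j'(t)-a_j'(0))+(a_i'(t)-a_i'(0))a_j'(0)$, handled by $|a_i'(t)-a_i'(0)|\le\omega_i(\|h\|_X)\|h\|_Y$ from \autoref{lemma der prime ic} together with the uniform bounds $|a_i'(t)|,|a_i'(0)|\le k\|h\|_Y$; (c) the terms $\sum_i\big(\partial_i f(\mathbf a(t))-\partial_i f(\mathbf a(0))\big)a_i''(t)$ plus $\sum_i\partial_i f(\mathbf a(0))\big(a_i''(t)-a_i''(0)\big)$, where the first piece uses Lipschitz continuity of $\partial_i f$ on $K$ (it is $C^1$ there) times $|\mathbf a(t)-\mathbf a(0)|\lesssim\|h\|_X$ times $|a_i''(t)|\le |a_i''(t)-a_i''(0)|+|a_i''(0)|\le(\omega(\|h\|_X)+c)\|h\|_Y^2$, and the second piece uses hypothesis (4) for $a_i$ directly. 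Adding the three groups, each summand carries a factor $\|h\|_Y^2$ times a quantity that tends to $0$ as $\|h\|_X\to0$; taking $\omega_A$ to be the (finite) sum of these moduli of continuity yields property (4) for $a$, completing the proof.
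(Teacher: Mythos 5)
Your proposal is correct and takes essentially the same route as the paper's proof: chain rule for $a'$ and $a''$, the intermediate-$t$ bounds of \autoref{lemma der prime ic} applied to each $a_i$, and a telescoping decomposition of $a''(t)-a''(0)$ controlled by the boundedness, Lipschitz and H\"older properties of the derivatives of $f$. The differences are only cosmetic: your groups (a)--(c) regroup the paper's terms $R_{i,k}$ and $S_i$, and you are somewhat more explicit about the embedding constant of $X\hookrightarrow Y$ and about keeping the curve $(a_1(t),\dots,a_n(t))$ in a compact subset of $U$.
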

\begin{proof}
    Since $a(t)=f(a_1(t),\dots, a_n(t))$, property $(1)$ follows from the regularity assumptions on $f$. By computing the derivatives, we obtain  
    \[a'(t)=\sum_{i=1}^n \derivparzialecompl{f}{x_i}(a_1(t),\dots, a_n(t))\,a'_i(t),\]
    \[a''(t)=\sum_{i,k=1}^n\derivparzialecompl{^2f}{x_i\partial x_k}(a_1(t),\dots, a_n(t))\, a'_i(t)a'_k(t)\;+\sum_{i=1}^n \derivparzialecompl{f}{x_i}(a_1(t),\dots, a_n(t))\,a''_i(t).\]
Since, for every $i \in \{1, \dots, n\}$, the function $a_i$ satisfies properties (2) and (3), it follows that the same estimates hold for $a$.

We now want to prove that $a$ satisfy property (4). By direct computation, we have that
\begin{equation}\label{dis comp ICj}
\begin{split}
    |a''(t)-a''(0)|&\leq \sum_{i,k=1}^n\left|\derivparzialecompl{^2f}{x_i\partial x_k}(a_1(t),\dots, a_n(t))\, a'_i(t)a'_k(t)-\derivparzialecompl{^2f}{x_i\partial x_k}(a_1(0),\dots, a_n(0))\, a'_i(0)a'_k(0)\right|\\
    &+\sum_{i=1}^n \left|\derivparzialecompl{f}{x_i}(a_1(t),\dots, a_n(t))\,a''_i(t) -\derivparzialecompl{f}{x_i}(a_1(0),\dots, a_n(0))\,a''_i(0)\right|\\
&=:\sum_{i,k=1}^nR_{i,k}+\sum_{i=1}^n S_i.
    \end{split}
\end{equation}
For all $i=1,\dots.n$, we have that 
\[S_i\leq |a''_i(t)-a''_i(0)|\left|\derivparzialecompl{f}{x_i}(a_1(t),\dots, a_n(t))\right|+|a''_i(0)|\left|\derivparzialecompl{f}{x_i}(a_1(t),\dots, a_n(t))-\derivparzialecompl{f}{x_i}(a_1(0),\dots, a_n(0))\right|.\]
Hence, by the lipschitzianity of the first derivatives of $f$ and \autoref{lemma der prime ic}, we deduce 
\[\left|\derivparzialecompl{f}{x_i}(a_1(t),\dots, a_n(t))-\derivparzialecompl{f}{x_i}(a_1(0),\dots, a_n(0))\right|=\omega(\norm{h}{X})\]
As a consequence, by assumptions (3) and (4), we obtain 
\begin{equation}\label{dis si}
    S_i\leq\omega(\|h\|_X)\, \|h\|_{Y}^2,
\end{equation} 
We can repeat a similar argument in order to bound $R_{i,k}$ from above, indeed
\begin{equation*}
\begin{split}
     R_{i,k}&\leq|a_i'(t)-a'_i(0)||a'_k(t)|\left|\derivparzialecompl{^2f}{x_i\partial x_k}(a_1(t),\dots ,a_n(t))\right|+|a'_i(0)|\left|\derivparzialecompl{^2f}{x_i\partial x_k}(a_1(t),\dots ,a_n(t))\right||a'_k(t)-a'_k(0)|\\&+|a'_i(0)||a'_k(0)|\left|\derivparzialecompl{^2f}{x_i\partial x_k}(a_1(t),\dots ,a_n(t))-\derivparzialecompl{^2f}{x_i\partial x_k}(a_1(0),\dots ,a_n(0))\right|.
    \end{split}
\end{equation*}
Using \autoref{lemma der prime ic} and the H\"older continuity and the boundedness of the second derivatives of $f$, we obtain  
\begin{equation}\label{dis R_i}
    R_{i,k}\leq \omega(\norm{h}{X})\norm{h}{Y}^2.
\end{equation}
Using \eqref{dis comp ICj}, \eqref{dis R_i} and \eqref{dis si} we obtain that 
\[|a''(t)-a''(0)|\leq \omega(\norm{h}{X})\norm{h}{Y}^2.\]

\end{proof}

\begin{oss}\label{corollario IC}
In the case \( Y = H^{s}(\partial \Omega) \), properties (1) and (4) coincide with assumption \ICref.  
Therefore, by \autoref{lemma con f}, if properties (2), (3), and \ICref{} hold for each \(A_i\), then they also hold for the functional \(A\).  
In particular, \(A\) satisfies assumption \ICref{} at \(\Omega\).

\end{oss}
\begin{lemma}\label{lemma prop lambda etc}
    Let $\om\subseteq \RR^m$ be a bounded $C^3$ open set and $\gamma>0$. The following properties hold:
    \begin{enumerate}[(i)]
        \item $\Lambda$ and $T$ verify the assumptions of \autoref{lemma con f} for $Y=H^{\frac{1}{2}}(\partial \om)$ and $X=C^{2,\gamma}(\partial \om)$,
        \item $P$ verifies the assumptions of \autoref{lemma con f} for $Y=H^{1}(\partial \om)$ and $X=C^{2,\gamma}(\partial \om)$,
        \item $V$ verifies the assumptions of \autoref{lemma con f} for $Y=L^2(\partial \om)$ and $X=C^{2,\gamma}(\partial \om)$.
    \end{enumerate}
\end{lemma}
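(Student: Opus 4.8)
The plan is to verify properties (1)–(4) of Lemma \ref{lemma con f} for each of the four functionals $\Lambda$, $T$, $P$, $V$ with the prescribed choice of target space $Y$. Property (1), twice differentiability of $t \mapsto A(\Omega_t)$ on $[0,1]$, is classical for all four functionals and follows from the smoothness of the perturbation field together with standard shape-differentiability theory (e.g. \cite[Chapter 5]{HP18shape}); since $X = C^{2,\gamma}(\partial\Omega)$ controls $W^{1,\infty}$ diffeomorphisms, the perturbed domains $\Omega_t$ stay in a regularity class where these results apply. So the content is really in the three quantitative estimates (2), (3), (4), and these I would treat functional by functional.

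First I would record the shape-derivative formulas. Writing $h$ for the normal perturbation, one has, at least at $t=0$,
\[
\Lambda'(0) = -\int_{\partial\Omega} |\partial_n u|^2\, h \, d\mathcal{H}^{m-1}, \qquad
T'(0) = \int_{\partial\Omega} |\partial_n w|^2\, h\, d\mathcal{H}^{m-1},
\]
\[
P'(0) = \int_{\partial\Omega} H\, h\, d\mathcal{H}^{m-1}, \qquad
V'(0) = \int_{\partial\Omega} h\, d\mathcal{H}^{m-1},
\]
where $u,w$ are the first eigenfunction and torsion function and $H$ is the mean curvature. The bound (2) for $V$ is immediate via Cauchy–Schwarz, giving $|V'(0)| \le C\|h\|_{L^2(\partial\Omega)}$. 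For $P$, $|P'(0)| \le \|H\|_{L^\infty}\|h\|_{L^1} \le C\|h\|_{H^1(\partial\Omega)}$ — in fact even the $L^2$ norm would do for the first derivative, but $H^1$ is the natural space for the \emph{second} derivative of $P$, which contains a term $\int |\nabla_\tau h|^2$, and that is why $Y = H^1$ is forced here. For $\Lambda$ and $T$ the first derivatives are controlled by $\|\partial_n u\|_{L^4}^2$ or $\|\partial_n w\|_{L^4}^2$ times $\|h\|_{L^2}$, hence certainly by $\|h\|_{H^{1/2}(\partial\Omega)}$; the choice $Y = H^{1/2}$ is dictated, as for $P$, by the second-order term, which after the Hadamard formula involves a Dirichlet-energy–type expression in the trace of the state-variable shape derivative, and the $H^{1/2}(\partial\Omega)$ seminorm is exactly what bounds $\|\nabla u'\|_{L^2(\Omega)}^2$ when $u' $ solves an elliptic problem with boundary data built from $h$ (this is the standard trace/lifting estimate). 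So step two is: invoke the explicit second-derivative formulas from \autoref{Teorema struttura} and \autoref{oss calcolo l2}, and read off (3) by plugging in these elliptic and trace estimates, noting all constants depend only on $m$ and on fixed norms of $u$, $w$, and the geometry of $\Omega$.

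The main obstacle — and the only genuinely delicate point — is property (4), the modulus-of-continuity estimate $|a''(t) - a''(0)| \le \omega(\|h\|_X)\|h\|_Y^2$ uniformly in $t \in [0,1]$. Here I would not differentiate twice directly at $\Omega_t$ but instead transport everything back to the fixed domain $\Omega$ via the diffeomorphism $\Phi_t = I + t\,h\,n$ (suitably extended), so that $a''(t)$ becomes an integral over $\partial\Omega$ (or $\Omega$) of an expression that is \emph{polynomial in} $t$, $h$, $\nabla h$, $D\Phi_t$, $(D\Phi_t)^{-1}$, $\det D\Phi_t$ and in the pulled-back state variables $u_t \circ \Phi_t$, $w_t \circ \Phi_t$ and their gradients. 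Each of these ingredients depends on $t$ and on $h$ in a way that is Lipschitz (or Hölder, coming from the $C^{2,\gamma}$ data and Schauder estimates for $u_t, w_t$) with respect to the $C^{2,\gamma}$ norm of $h$, with the dependence on $t$ smooth; subtracting the $t=0$ value and using that $\|h\|_X$ small keeps all these quantities in a fixed bounded set, one factors out two powers of $h$ measured in the weak norm $Y$ and a modulus of continuity in the strong norm $X$. Concretely: the difference $a''(t) - a''(0)$ is a sum of terms each of which is (a coefficient that is $O(\omega(\|h\|_X))$) times (a quadratic form in $h$ bounded by $\|h\|_Y^2$); the quadratic forms are exactly the $H^{1/2}$, $H^1$, $L^2$ Dirichlet/energy expressions identified above, and the coefficients carry the continuity in $t$ and in the strong norm. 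This is precisely the structure that \cite{DL19} isolates, so I would cite their framework and the continuity of the map $h \mapsto (u_h, w_h)$ from $C^{2,\gamma}(\partial\Omega)$ into $C^{2,\gamma}(\overline\Omega)$ (via Schauder theory on the transported equations) as the engine, and then the verification is bookkeeping. Having established (1)–(4) for each of $\Lambda$, $T$ (with $Y = H^{1/2}$), $P$ (with $Y = H^1$), and $V$ (with $Y = L^2$), the lemma is proved.
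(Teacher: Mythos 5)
Your plan is consistent with how the paper handles this lemma, but note that the paper does not actually prove it: immediately after the statement it simply writes that the lemma ``was proven in \cite{dambrine2002variations} and later in a more general setting in \cite{DL19}''. So the comparison is between your reconstruction and the cited literature rather than an in-paper argument. Your reconstruction is sound in outline and correctly motivated: the Hadamard formulas you write for $\Lambda'(0)$, $T'(0)$, $P'(0)$, $V'(0)$ are the right ones, the choices $Y=H^{1/2}$, $H^1$, $L^2$ are indeed dictated by the quadratic structure of the second derivatives (the $\int|\nabla_\tau h|^2$ term for $P$, the energy of the boundary-value problem solved by the shape derivative of the state for $\Lambda$ and $T$), and the pull-back strategy via $\Phi_t=I+t\,h\,n$ with Schauder control of the transported state variables is exactly the mechanism used in \cite{dambrine2002variations,DL19}.

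The one point where your proposal understates the difficulty is property (4). Declaring it ``bookkeeping'' once the \cite{DL19} framework is invoked is circular as a self-contained proof: the entire content of the lemma \emph{is} assumption \ICref{} for these four functionals, i.e.\ the uniform-in-$t$ estimate $|a''(t)-a''(0)|\le\omega(\|h\|_{C^{2,\gamma}})\|h\|_{Y}^2$, where the deviation must be measured in the \emph{weak} norm $Y$ while smallness is only available in the strong norm $X$. Establishing this requires, among other things, uniform elliptic and Schauder estimates on the family of moving domains, quantitative continuity in $t$ of the pulled-back eigenfunction and torsion function together with their shape derivatives, and care with the simplicity of the first eigenvalue along the deformation; this occupies a substantial technical portion of \cite{dambrine2002variations} and \cite{DL19} and cannot be reduced to factoring coefficients out of a polynomial expression. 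So either your proof should be read, like the paper's, as an appeal to those references (in which case it is correct and essentially the same resolution), or property (4) must be carried out in detail, which your sketch does not do.
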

\autoref{lemma prop lambda etc} was proven in \cite{dambrine2002variations} and later in a more general setting in \cite{DL19}.

\begin{prop}\label{proposizione IC}
Let $\om\subset\RR^m$ be $C^3$ open and bounded, let $F_q$ be the functional defined in \eqref{fq}, with $q>0$ and $G$ the functional defined in \eqref{function_G}.
 $F_q$ satisfies the assumption \ICref at $\om$ for $s=\frac{1}{2}$ and $X=C^{2,\gamma}(\partial \om),$ for $\gamma>0$. Moreover, $G$ satisfies the assumption \ICref at $\om$ for the same choice of $X$ and $s=1$.
\end{prop}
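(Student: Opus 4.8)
The plan is to reduce \autoref{proposizione IC} to \autoref{lemma con f} and \autoref{corollario IC} by writing $F_q$ and $G$ as compositions of the elementary shape functionals $T$, $\Lambda$, $V$ (and $P$) with a smooth scalar function $f$. Concretely, $F_q(\Omega)=f_q(T(\Omega),\Lambda(\Omega),V(\Omega))$ where $f_q(x_1,x_2,x_3)=x_1^q x_2 x_3^{-\alpha_q}$, and $G(\Omega)=g(T(\Omega),\Lambda(\Omega),P(\Omega))=T(\Omega)\Lambda(\Omega)P(\Omega)^{-\frac{m}{m-1}}$. Both $f_q$ and $g$ are of class $C^\infty$ — hence $C^{2,\alpha}$ for any $\alpha>0$ — on the open set $U=\{x_1>0,\,x_2>0,\,x_3>0\}\subset\mathbb{R}^3$, since they are monomials in positive variables. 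The key structural point is that $(T(O),\Lambda(O),V(O))$ (resp.\ with $P$) lies in $U$ for every $O\in\mathcal{V}(\Omega)$ when $\Omega$ is a nonempty bounded open set: torsional rigidity, first eigenvalue, volume and perimeter of a bounded open set are all strictly positive, so the composition hypothesis of \autoref{lemma con f} is met.

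The next step is to feed in \autoref{lemma prop lambda etc}, which gives exactly that $\Lambda$ and $T$ satisfy hypotheses (1)–(4) of \autoref{lemma con f} with $Y=H^{1/2}(\partial\Omega)$, that $P$ satisfies them with $Y=H^1(\partial\Omega)$, and that $V$ satisfies them with $Y=L^2(\partial\Omega)$, all with $X=C^{2,\gamma}(\partial\Omega)$. For the functional $F_q$ I would take the common space $Y=H^{1/2}(\partial\Omega)$: since $L^2(\partial\Omega)\supset H^{1/2}(\partial\Omega)$ with continuous (indeed norm-decreasing) immersion, the estimates (2)–(4) for $V$ stated with the $L^2$-norm on the right-hand side a fortiori hold with the $H^{1/2}$-norm, so $V$ also satisfies (1)–(4) with $Y=H^{1/2}(\partial\Omega)$. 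Thus $T$, $\Lambda$, $V$ all satisfy the hypotheses of \autoref{lemma con f} with the \emph{same} $Y=H^{1/2}(\partial\Omega)$ and $X=C^{2,\gamma}(\partial\Omega)$; applying \autoref{lemma con f} with $f=f_q$ yields that $a(t)=F_q(\Omega_t)$ satisfies (1)–(4) with this $Y$, and by \autoref{corollario IC} (the case $Y=H^s(\partial\Omega)$, here $s=\tfrac12$) this is precisely assumption \ICref for $F_q$ at $\Omega$.

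For $G$ I would run the same argument with the triple $T$, $\Lambda$, $P$ and $f=g$, but now the common space must accommodate $P$, whose estimates are only controlled by the $H^1$-norm. So I take $Y=H^1(\partial\Omega)$: again $T$ and $\Lambda$, which satisfy (1)–(4) with $Y=H^{1/2}(\partial\Omega)$, a fortiori satisfy them with $Y=H^1(\partial\Omega)$ because $H^1(\partial\Omega)\hookrightarrow H^{1/2}(\partial\Omega)$ continuously and the right-hand sides in (2)–(4) only get larger when $\|h\|_{H^{1/2}}$ is replaced by $\|h\|_{H^1}$; meanwhile $P$ satisfies them with $Y=H^1(\partial\Omega)$ directly. \autoref{lemma con f} then gives that $t\mapsto G(\Omega_t)$ obeys (1)–(4) with $Y=H^1(\partial\Omega)$, and \autoref{corollario IC} with $s=1$ identifies this with assumption \ICref for $G$ at $\Omega$, with $X=C^{2,\gamma}(\partial\Omega)$ as claimed.

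The only genuinely delicate points — and where I would be careful rather than terse — are (a) checking that well-definedness of the perturbed domains $\Omega_t$ for $\|h\|_X\le\eta$ holds uniformly, which is standard once $X\hookrightarrow W^{1,\infty}$ and $\Omega$ is $C^3$, and (b) verifying the monotonicity-in-$Y$ observation used twice above, namely that if a functional satisfies (1)–(4) with some $Y_0$ and $Y_0\hookrightarrow Y_1$ continuously, then it satisfies (1)–(4) with $Y_1$; this is immediate from $\|\cdot\|_{Y_0}\le c\|\cdot\|_{Y_1}$. Beyond these bookkeeping items the proof is purely a matter of assembling \autoref{lemma con f}, \autoref{corollario IC} and \autoref{lemma prop lambda etc}, so I expect no real obstacle — the main thing to get right is simply the choice of the common intermediate space $Y$ ($H^{1/2}$ for $F_q$, $H^1$ for $G$) dictated by the ``worst'' constituent functional.
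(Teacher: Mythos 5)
Your argument is correct and is essentially the paper's own route: the paper proves this proposition in one line by combining \autoref{corollario IC} (i.e.\ \autoref{lemma con f}) with \autoref{lemma prop lambda etc}, and your bookkeeping with the common space $Y$ (upgrading $V$ from $L^2$ to $H^{1/2}$ for $F_q$, and $T,\Lambda$ from $H^{1/2}$ to $H^1$ for $G$, since the weaker norm is dominated by the stronger one) is exactly the intended argument. Two cosmetic remarks only: in your last paragraph the embedding arrow points the wrong way for the inequality $\|\cdot\|_{Y_0}\le c\,\|\cdot\|_{Y_1}$ you actually use, and to have $f_q,g\in C^{2,\alpha}(U)$ in the literal (global H\"older) sense you should take $U$ to be a bounded open neighbourhood, with closure contained in $(0,\infty)^3$, of the compact set of values $(T,\Lambda,V \text{ or } P)(\Omega_t)$ attained for $t\in[0,1]$ and $\|h\|_X\le\eta$, rather than the whole positive octant.
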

\begin{proof}
    The result directly follows from  \autoref{corollario IC} and \autoref{lemma prop lambda etc}.
\end{proof}

\subsection{Criticality of the ball}

In this section, we compute the first derivatives of $F_q$ and $G$ at the ball (in the sense of \autoref{Teorema struttura}) and prove that they vanish. 

\begin{lemma}\label{lemma shape critica}
Let $B_1 \subset \mathbb{R}^m$ denote the unit ball. 
Consider the functional $F_q$ defined in \eqref{fq}, with $q > 0$, 
and the functional $G$ defined in \eqref{function_G}. 
Then $B_1$ is a critical shape for $F_q$ and $G$ 
(in the sense of \autoref{defoptimal}).
\end{lemma}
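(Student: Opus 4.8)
The plan is to exploit the well-known scaling properties of $T$, $\Lambda$, $P$, and $V$ together with the structure theorem for first shape derivatives (\autoref{Teorema struttura}) and the fact that on the ball the shape gradient of each elementary functional is a constant multiple of the normal displacement. Concretely, for any shape functional $A$ that is homogeneous of some degree $k$ under dilations, i.e.\ $A(tB_1) = t^k A(B_1)$, one has $\ell_1[A](B_1)(1) = k\,A(B_1)/R$ for the radial perturbation $h\equiv 1$ (here $R=1$), and moreover $\ell_1[A](B_1)$ must be invariant under the orthogonal group $O(m)$ because $B_1$ is. A continuous linear form on $C^1(\partial B_1)$ that is $O(m)$-invariant must annihilate every spherical harmonic of degree $\ge 1$; hence $\ell_1[A](B_1)(h)$ depends only on the average of $h$ over $\partial B_1$, and by the homogeneity computation it equals $\frac{k\,A(B_1)}{|\partial B_1|}\int_{\partial B_1} h\, d\mathcal{H}^{m-1}$. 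This identifies $\ell_1[T]$, $\ell_1[\Lambda]$, $\ell_1[P]$, $\ell_1[V]$ explicitly on the ball.

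The next step is to assemble these into $\ell_1[F_q](B_1)$ and $\ell_1[G](B_1)$ via the chain rule: writing $F_q = f(T,\Lambda,V)$ and $G = g(T,\Lambda,P)$ with $f,g$ the obvious smooth functions (as in \autoref{lemma con f}), we get $\ell_1[F_q](B_1)(h) = \partial_T f\,\ell_1[T](h) + \partial_\Lambda f\,\ell_1[\Lambda](h) + \partial_V f\,\ell_1[V](h)$, and similarly for $G$. Since each $\ell_1$ of the elementary functionals is a multiple of $h\mapsto\int_{\partial B_1} h\,d\mathcal{H}^{m-1}$, so is $\ell_1[F_q](B_1)$ and $\ell_1[G](B_1)$; the coefficient is a linear combination of the homogeneity degrees $k_T = m+2$, $k_\Lambda = -2$, $k_P = m-1$, $k_V = m$, weighted by the partial derivatives of $f$ or $g$. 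The key point is that $F_q$ and $G$ are \emph{scale-invariant} by construction — this is exactly why the exponent $\alpha_q$ was chosen in \eqref{fq} and why $G$ has the $P^{m/(m-1)}$ normalisation — so the relevant linear combination of homogeneity degrees vanishes identically. Therefore $\ell_1[F_q](B_1) \equiv 0$ and $\ell_1[G](B_1) \equiv 0$ as linear forms on $C^\infty(\partial B_1)$, which is precisely the definition of $B_1$ being a critical shape in the sense of \autoref{defoptimal}.

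An equivalent and perhaps cleaner way to organize the argument, which I would use to avoid computing each $\ell_1$ separately, is: let $A$ be any scale-invariant functional that is the Fréchet-differentiable composition of scaling-homogeneous functionals. For $h\in C^\infty(\partial B_1)$ decompose $h = \bar h + h^\perp$ where $\bar h$ is the mean value and $h^\perp$ has zero mean. A constant normal perturbation $h\equiv\bar h$ corresponds at first order to a dilation $B_1 \mapsto (1+\bar h)B_1$, along which $A$ is constant by scale-invariance, so $\ell_1[A](B_1)(\bar h) = 0$. For $h^\perp$, the $O(m)$-invariance of $\ell_1[A](B_1)$ (inherited from the invariance of $B_1$ and of $A$ under rotations) forces $\ell_1[A](B_1)(h^\perp)=0$, since $h^\perp$ is a sum of nonconstant spherical harmonics and any invariant linear functional pairs to zero against each of them by averaging over $O(m)$. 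Combining, $\ell_1[A](B_1)\equiv 0$. Applying this with $A = F_q$ and $A = G$ finishes the proof.

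The main obstacle is making the $O(m)$-invariance argument fully rigorous at the level of the abstract linear form $\ell_1[A](B_1)$ on $C^1(\partial B_1)$: one must check that rotational symmetry of the set and of the functional genuinely descends to equivariance of the shape derivative, i.e.\ $\ell_1[A](RB_1)(h\circ R^{-1}) = \ell_1[A](B_1)(h)$ for $R\in O(m)$, and then that $RB_1 = B_1$ lets us conclude invariance; this is standard but needs the transformation rules for shape derivatives under the action of diffeomorphisms. If one prefers to sidestep this, the alternative is the direct route of the second paragraph — compute $\ell_1[T]$, $\ell_1[\Lambda]$, $\ell_1[P]$, $\ell_1[V]$ on $B_1$ explicitly (these are classical: e.g.\ $\ell_1[V](B_1)(h) = \int_{\partial B_1} h$, $\ell_1[P](B_1)(h) = (m-1)\int_{\partial B_1} h$ since the mean curvature of $\partial B_1$ is $m-1$, and $\ell_1[T]$, $\ell_1[\Lambda]$ via the Hadamard formula involving $|\nabla w|^2$ and $|\nabla u_1|^2$ on the sphere, which are constant by radial symmetry) and then verify the scale-invariant combination cancels — a routine but slightly longer computation.
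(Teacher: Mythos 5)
Your proposal is correct and takes essentially the same route as the paper: the paper reduces to zero-mean perturbations by scale invariance, applies Leibniz's rule to express $\ell_1[F_q](B_1)$ and $\ell_1[G](B_1)$ through $\ell_1[T]$, $\ell_1[\Lambda]$, $\ell_1[P]$, $\ell_1[V]$, and invokes the classical fact that these elementary first derivatives at $B_1$ are proportional to $\int_{\partial B_1} h\, d\mathcal{H}^{m-1}$, hence vanish on zero-mean $h$. Your $O(m)$-invariance/homogeneity argument is simply a self-contained justification of that same classical fact, so the two proofs coincide in substance.
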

\begin{proof}
Since $F_q$ and $G$ are scale invariant, it is sufficient to prove that if $\varphi$ has zero mean on $\partial \om$, then $\ell_1[F_q](\varphi)=\ell_1[G](\varphi)=0$.\\
    Using Leibniz's rule, we have that 
    \[\ell_1[F_q]\varphi=\dfrac{T(B_1)^{q-1}}{\Vol(B_1)}\left[\ell_1[\Lambda](\varphi) T(B_1)+qT(B_1)\Lambda(B_1)\ell_1[T](\varphi)-\alpha_q\dfrac{\Lambda(B_1)T(B_1)\ell_1[Vol](\varphi)}{\Vol(B_1)}\right],\]
   and
    \[\ell_1[G]= \dfrac{1}{P(B_1)^{\frac{m}{m-1}}}\left[\ell_1[\Lambda](\varphi) T(B_1)+\Lambda(B_1)\ell_1[T](\varphi)-\dfrac{\Lambda(B_1)T(B_1)\ell_1[P](\varphi)}{P(B_1)}\right].\]
    Since the derivatives of $T,\Lambda,P$ and $\Vol$ vanish over the ball, we obtain 
     \[\ell_1[F_q](\varphi)=0\quad \ell_1[G](\varphi)= 0.\]
\end{proof}

\subsection{On the coercivity conditions}

In this section, we investigate the coercivity properties of the bilinear forms corresponding to the second derivatives of the functionals $F_q$ and $G$ (defined in \autoref{Teorema struttura}).

To this end, we make use of spherical harmonics.  
Let \(\mathcal{H}_k\) denote the space of spherical harmonics of degree \(k\), and let \(\{Y^{k,\ell}\}_{1 \leq \ell \leq N_k}\) be an orthonormal basis of \(\mathcal{H}_k\) with respect to the \(L^2(\partial B_1)\) scalar product.
Then the collection \(\{Y^{k,\ell}\}_{k \in \mathbb{N},\, 1 \leq l \leq N_k}\) forms a Hilbert basis for \(L^2(\partial B_1)\), and any function \(\varphi \in L^2(\partial B_1)\) can be expanded as:
\begin{equation}\label{espansione phi}
    \varphi(x) = \sum_{k=0}^\infty \sum_{l=1}^{N_k} \varphi_{k,l} \, Y^{k,l}(x), \quad \text{for } |x| = 1.
\end{equation}

Furthermore, the sequence \((\varphi_{k,l})\) characterizes the Sobolev regularity of \(\varphi\): we have \(\varphi \in H^s(\partial B_1)\) if and only if the series
\[
\sum_{k=0}^\infty (1 + k^2)^s \sum_{\ell=1}^{N_k} |\varphi_{k,\ell}|^2
\]
is convergent.
\begin{oss}\label{oss sferiche Ts} Let $\mathcal{T}^0(\partial B_1)$ be the functional space defined in \eqref{def Ts}.
    If $\varphi\in L^2(\partial B_1)$,  then $\varphi\in \mathcal {T}^0(\partial B_1)$ if and only if $\varphi_{k,l}=0$ for $k=0,1$. 

\end{oss}

\begin{definizione}\label{notazione derivate seconde}
   Let $\varphi\in C^{2,\gamma}(\partial B_1)$, with $\norma{\varphi}_{L^\infty(\partial B_1)}<\frac{1}{2}$. Let $t\in [0,1]$ and consider $B_t\varphi$ the set defined through its boundary
   \[\partial B_{t\varphi}=\{x+t\varphi(x)x:x\in \partial B_1\}.\]
   We define the following functions
   \[p:t\in[0,1]\to P(B_{t\varphi})\in\RR, \qquad \vol:t\in[0,1]\to V(B_{t\varphi})\in\RR,\]
\[\tau:t\in[0,1]\to T(B_{t\varphi})\in\RR,\qquad \lambda:t\in[0,1]\to \Lambda(B_{t\varphi})\in\RR.\]
\end{definizione}

It is well known (see, for instance, \cite[Chapter 5]{HP18shape} or \cite{DL19}) that if 
\[
\int_{\partial B_1} \varphi \, d\mathcal{H}^{m-1} = 0,
\]
then the first derivatives of all the functions defined in \autoref{notazione derivate seconde} vanish at \(0\).  
Furthermore, in the following lemma we recall the expressions of their second derivatives.

\begin{lemma} \label{derivate seconde} Let $\varphi\in C^{2,\gamma}(\partial B_1)$, with $\gamma>0$. Using the notation introduced in \autoref{notazione derivate seconde} and the expansion of the function $\varphi$ in \eqref{espansione phi}, it holds that 

   \begin{align*}
        \vol''(0)
    &= \sum_{k=0}^{\infty} \sum_{l=1}^{d_k} (m-1)\, \varphi_{k,l}^2, \\[1ex]
\tau''(0)
    &= -\sum_{k=0}^{\infty} \sum_{l=1}^{d_k} \left( \dfrac{2}{m^2}k - \dfrac{m+1}{m^2} \right) \varphi_{k,l}^2, \\[1ex]
p''(0) 
    &= \sum_{k=0}^{\infty} \sum_{l=1}^{d_k} \left( k^2 + (m-2)k + (m-1)(m-2) \right) \varphi_{k,l}^2, \\[1ex]
\lambda''(0)
    &= \beta_m^2 \left( 
        3 \varphi_{0,1}^2 
        + \sum_{k=1}^{\infty} \sum_{l=1}^{d_k} 
        2 \left[ 
            k + \dfrac{m-1}{2} 
            - c_k
        \right] \varphi_{k,l}^2 
    \right),
    \end{align*}
    where 
    \[\beta_m=\dfrac{2\Lambda(B_1)}{P(B_1)},
    \]
    and
    \[ c_k=j_{\frac{m}{2}-1} \dfrac{J_{k+\frac{m}{2}}(j_{\frac{m}{2}-1})}{J_{k-1+\frac{m}{2}}(j_{\frac{m}{2}-1})}. \]
    
\end{lemma}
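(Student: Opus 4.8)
The strategy is to compute each of the four second derivatives $\vol''(0)$, $\tau''(0)$, $p''(0)$, $\lambda''(0)$ by combining the structure theorem for second shape derivatives (\autoref{Teorema struttura}) with the observation in \autoref{oss calcolo l2}, which identifies $a''(0)$ with the bilinear form $\ell_2[A](B_1)(\varphi,\varphi)$ evaluated on the radial perturbation $x\mapsto x+t\varphi(x)x$ (so that $\xi\cdot n = \varphi$ on $\partial B_1$). Since the unit sphere has constant second fundamental form $\mathbf{B}=\mathrm{Id}$ on the tangent space and the first derivatives $\ell_1[A]$ all vanish at $B_1$ by \autoref{lemma shape critica}, the boundary-curvature correction terms in part (ii) of \autoref{Teorema struttura} simplify considerably; effectively one is left with $\ell_2[A](B_1)(\varphi,\varphi)$ plus explicitly computable tangential contributions. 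Because everything is diagonal in the spherical-harmonic basis $\{Y^{k,l}\}$ — each $Y^{k,l}$ being an eigenfunction of the Laplace–Beltrami operator with eigenvalue $k(k+m-2)$ — it suffices to evaluate each bilinear form on a single harmonic $\varphi = Y^{k,l}$ and read off the coefficient of $\varphi_{k,l}^2$.

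For the three ``elementary'' functionals the computation is classical. For $\vol$ one expands $V(B_{t\varphi}) = \frac{1}{m}\int_{\partial B_1}(1+t\varphi)^m\,d\mathcal{H}^{m-1}$ and differentiates twice at $t=0$, using $\int_{\partial B_1}\varphi\,d\mathcal{H}^{m-1}=0$ and $\int_{\partial B_1}(Y^{k,l})^2 = 1$, which gives the factor $(m-1)$. For $p$ one writes the perimeter of a radial graph, $P(B_{t\varphi}) = \int_{\partial B_1}(1+t\varphi)^{m-2}\sqrt{(1+t\varphi)^2 + t^2|\nabla_\tau\varphi|^2}\,d\mathcal{H}^{m-1}$, expands to second order, and uses the eigenvalue identity $\int_{\partial B_1}|\nabla_\tau Y^{k,l}|^2 = k(k+m-2)$; collecting terms yields $k^2+(m-2)k+(m-1)(m-2)$. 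For $\tau$ one differentiates the state equation for the torsion function (Hadamard formula), obtaining $\tau'(0)=\int_{\partial B_1}\varphi\,(\partial_n w)^2$ where $w=\frac{1-|x|^2}{2m}$ is the torsion function of $B_1$, so $\partial_n w = -1/m$ on $\partial B_1$; differentiating once more and using the known formula for the shape Hessian of torsion together with the Dirichlet-to-Neumann expansion of the perturbed state produces the stated coefficient $-(2k/m^2 - (m+1)/m^2)$. For $\lambda$ one proceeds identically with the first eigenfunction $u_1 = c\,|x|^{1-m/2}J_{m/2-1}(j_{m/2-1}|x|)$ normalized in $L^2$: $\lambda'(0) = -\int_{\partial B_1}\varphi\,(\partial_n u_1)^2$, and the second derivative brings in the solution of the linearized eigenvalue problem, whose radial part on each harmonic sector is again a Bessel function $J_{k+m/2-?}$; the ratio $c_k = j_{m/2-1}\,J_{k+m/2}(j_{m/2-1})/J_{k-1+m/2}(j_{m/2-1})$ arises precisely from matching this radial solution to the boundary data, and the prefactor $\beta_m = 2\Lambda(B_1)/P(B_1)$ bundles the normalization constants. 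All of these formulas are available in the literature (e.g.\ \cite{dambrine2002variations, HP18shape, DL19}), so I would cite them rather than rederive them in full.

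\textbf{Main obstacle.} The genuinely delicate step is the eigenvalue Hessian $\lambda''(0)$: unlike volume and perimeter, it is not a ``local'' geometric quantity, so one must actually solve the linearized Dirichlet eigenvalue problem on $B_1$ for boundary data $Y^{k,l}$, which requires a careful separation of variables in spherical coordinates, correct treatment of the degenerate $k=0$ mode (hence the isolated $3\varphi_{0,1}^2$ term, coming from the fact that the $k=0$ perturbation also shifts the normalization of $u_1$), and a clean argument that the Bessel quotient $c_k$ is exactly the Dirichlet-to-Neumann symbol in each angular sector. Keeping track of the normalization constants so that they collapse into the single factor $\beta_m^2$ is error-prone. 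For $\tau''(0)$ the analogous linearized Poisson problem is easier because the operator is invertible, but one still needs the explicit solution on each harmonic sector; here the polynomial radial profiles make the bookkeeping tractable. In writing the proof I would set up the spherical-harmonic reduction once, handle $\vol$ and $p$ quickly by direct expansion, and then devote the bulk of the argument to the state-equation differentiation for $\tau$ and $\lambda$, isolating the radial ODE and its solution in each case.
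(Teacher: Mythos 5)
The paper does not actually prove this lemma: immediately before it the authors state that they ``recall'' these second-derivative formulas from the literature (\cite[Chapter 5]{HP18shape}, \cite{DL19}, \cite{dambrine2002variations}), which is precisely what you propose, and your sketch (radial-graph expansions for $\vol$ and $p$, Hadamard formulas and linearized state equations for $\tau$ and $\lambda$, diagonalization over spherical harmonics with the Bessel quotient $c_k$ as the Dirichlet-to-Neumann symbol) is the standard derivation behind those references. One small correction: the disappearance of the curvature/tangential correction terms of \autoref{Teorema struttura} is justified by the fact that the perturbation $x+t\varphi(x)x$ is purely normal on $\partial B_1$, i.e.\ by \autoref{oss calcolo l2}, not by \autoref{lemma shape critica}, which concerns only $F_q$ and $G$ — the first shape derivatives of $V$, $P$, $T$, $\Lambda$ at the ball do not vanish unless $\int_{\partial B_1}\varphi\,d\mathcal{H}^{m-1}=0$.
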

\begin{oss}\label{oss ck} We recall the recurrence formula for the Bessel functions (see \cite[Chapter 9]{abramowitz1948handbook})
\[\dfrac{2\nu}{z}J_\nu(z)=J_{\nu-1}(z)+J_{\nu+1}(z).\]
    Using this formula with $z=j_{\frac{m}{2}-1}$ and $\nu=\frac{m}{2}$ we obtain 
    \[c_1=m.\]
    Using again the formula with $\nu=\dfrac{m}{2}+1$ we obtain 
    \[c_2=m+2-\dfrac{j_{\frac{m}{2}-1}^2}{m}.\]
 Since $j_\nu^2<2(\nu+1)(\nu+3)$  (see \cite[section 5]{ismail1995bounds}), we obtain that 
 \begin{equation}\label{c_2 magg m/2}
     c_2>\dfrac{m}{2}.
 \end{equation}
Moreover (see \cite{L99s} and \cite{N14}) $\{c_k\}_{k\in\mathbb{N}}$ is decreasing, vanishing and the function $k\in\mathbb{R}\rightarrow c_k\in\RR$ can be extended to a convex function with domain $(-1,\infty)$.
    
\end{oss}

We are now ready to compute the second derivatives of the functionals $G$ and $F_q$ at $B_1$.
\begin{lemma} \label{der F eG} Let $\varphi\in C^{2,\gamma}(\partial B_1)$, with $\gamma>0$ and consider $G$ the functional defined in \eqref{function_G} and $F_q$ the functional defined in \eqref{fq}, with $q>0$. Using the notation introduced in \autoref{Teorema struttura}, it holds that 
\begin{equation}\label{der G}
\begin{multlined}
    \ell_2[G](\varphi,\varphi)=\\-\dfrac{1}{P(B_1))^{\frac{m}{m-1}}}\dfrac{\Lambda(B_1)}{m^2(m+2)(m-1)}\sum_{k=2}^{\infty}\sum_{l=1}^{N_k}\left[mk^2+(3m^2-4m)k-7m^2+7m+4c_k(m-1)\right]\varphi_{k,l}^2,
\end{multlined}
\end{equation}

and
\begin{equation}\label{der F_q}
\ell_2[F_q](\varphi,\varphi)=2\dfrac{T(B_1)^{q-1}\Lambda(0)}{m^2(m+2)\Vol(B_1)^{\alpha_q}}\sum_{k=2}^{\infty}\sum_{l=1}^{N_k}\left\{k\left[2-q(m+2)\right]+q(m+2)+2m-2-2c_k\right\}\varphi_{k,l}^2.
\end{equation}
\end{lemma}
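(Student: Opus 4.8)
The plan is to compute $\ell_2[G](\varphi,\varphi)$ and $\ell_2[F_q](\varphi,\varphi)$ directly by differentiating twice the explicit product/quotient expressions for $G$ and $F_q$, using the structure of \autoref{Teorema struttura} together with \autoref{oss calcolo l2}, and then substituting the second derivatives recorded in \autoref{derivate seconde}. Concretely, since $B_1$ is a critical shape (\autoref{lemma shape critica}), all the first-order terms $\ell_1[\cdot]$ drop out when we restrict to perturbations $\varphi$ with zero mean, so the second derivative of a product reduces to a genuinely bilinear combination of the individual second derivatives. The key preliminary reduction is that for $\varphi$ with $\int_{\partial B_1}\varphi\,d\mathcal{H}^{m-1}=0$ we have $p'(0)=\vol'(0)=\tau'(0)=\lambda'(0)=0$, hence writing $G(B_{t\varphi})=\lambda(t)\tau(t)p(t)^{-m/(m-1)}$ and applying the Leibniz rule twice yields, at $t=0$,
\[
\ell_2[G](\varphi,\varphi)=\frac{1}{P(B_1)^{\frac{m}{m-1}}}\left[\lambda''(0)\,T(B_1)+\Lambda(B_1)\,\tau''(0)-\frac{m}{m-1}\,\frac{\Lambda(B_1)T(B_1)}{P(B_1)}\,p''(0)\right],
\]
and similarly for $F_q(B_{t\varphi})=\tau(t)^q\lambda(t)\vol(t)^{-\alpha_q}$,
\[
\ell_2[F_q](\varphi,\varphi)=\frac{T(B_1)^{q-1}}{\Vol(B_1)^{\alpha_q}}\left[q\,\tau''(0)\,\Lambda(B_1)+T(B_1)\,\lambda''(0)-\alpha_q\,\frac{\Lambda(B_1)T(B_1)}{\Vol(B_1)}\,\vol''(0)\right].
\]
Here I am using that cross terms such as $\lambda'(0)\tau'(0)$ vanish; this is exactly why the zero-mean restriction is harmless, and the extension of $\ell_2$ to all of $C^{2,\gamma}(\partial B_1)$ then follows by the structure theorem's correction terms, which for the ball and radial perturbations $\varphi(x)x$ are easy to track (the relevant shape fields are normal, so $\xi_\tau=0$).

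Next I would plug in the four formulas from \autoref{derivate seconde}. Writing $\varphi=\sum_{k\ge 0}\sum_l \varphi_{k,l}Y^{k,l}$ and collecting the coefficient of $\varphi_{k,l}^2$, each bracket becomes a sum over $k$ of an explicit rational expression in $k$, $m$, and $c_k$ (entering only through $\lambda''(0)$). The normalizing constants $T(B_1)$, $\Lambda(B_1)$, $P(B_1)$, $\Vol(B_1)$ for the ball are all explicit; in particular $\Lambda(B_1)=j_{m/2-1}^2$, $T(B_1)=\Vol(B_1)/(m(m+2))$, $P(B_1)=m\Vol(B_1)$, and $\beta_m=2\Lambda(B_1)/P(B_1)$, so the prefactor $\beta_m^2 T(B_1)$ appearing from $\lambda''(0)T(B_1)$ can be rewritten to match the claimed global prefactor $\frac{\Lambda(B_1)}{m^2(m+2)(m-1)P(B_1)^{m/(m-1)}}$ (resp. $\frac{2T(B_1)^{q-1}\Lambda(B_1)}{m^2(m+2)\Vol(B_1)^{\alpha_q}}$). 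The $k=0$ and $k=1$ terms should cancel identically: $k=0$ because $G$ and $F_q$ are scale-invariant (a constant $\varphi$ is an infinitesimal dilation, which must leave a scale-invariant functional unchanged to second order after accounting for the structure-theorem correction), and $k=1$ because such $\varphi$ corresponds to an infinitesimal translation, under which all four quantities are invariant; using $c_1=m$ from \autoref{oss ck} one checks the $k=1$ coefficient vanishes as well. This is why both sums in the statement start at $k=2$.

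The genuinely laborious — but entirely mechanical — part is the algebraic simplification: for $G$ one must verify that
\[
\frac{mk^2+(3m^2-4m)k-7m^2+7m+4c_k(m-1)}{}
\]
is exactly what emerges after combining $\lambda''(0)$'s coefficient $2\beta_m^2(k+\tfrac{m-1}{2}-c_k)$ times $T(B_1)$, plus $\Lambda(B_1)$ times $\tau''(0)$'s coefficient $-(\tfrac{2}{m^2}k-\tfrac{m+1}{m^2})$, plus $-\tfrac{m}{m-1}\tfrac{\Lambda T}{P}$ times $p''(0)$'s coefficient $k^2+(m-2)k+(m-1)(m-2)$, and multiplying through by the common denominator; for $F_q$ the analogous computation uses $\vol''(0)$'s coefficient $(m-1)$ and $\alpha_q=\frac{(m+2)q-2}{m}$, and the $c_k$-dependence survives only through $\lambda''(0)$ so it enters linearly as $-2c_k$, matching the claimed $\{k[2-q(m+2)]+q(m+2)+2m-2-2c_k\}$. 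I expect the main obstacle to be bookkeeping: getting the structure-theorem correction terms right (so that the extension to non-zero-mean $\varphi$ is legitimate, even though only the zero-mean case is needed downstream for \autoref{Teorema DL}) and tracking the precise powers of $\Vol(B_1)$, $P(B_1)$ through the Leibniz expansion without sign or exponent errors. Once the coefficients are matched, the lemma is proved; the sign analysis of these coefficients — which is where the thresholds $q^*$ and the role of $2m<j_{m/2-1}^2$ appear — is deferred to the subsequent proofs of \autoref{Optimality of the ball} and \autoref{teor F_q}.
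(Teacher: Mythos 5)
Your proposal is correct and follows essentially the same route as the paper: restrict to $\mathcal{T}^0(\partial B_1)$ (i.e.\ $\varphi_{k,l}=0$ for $k=0,1$) by translation and scale invariance, apply \autoref{oss calcolo l2} and the Leibniz rule to $g(t)=\lambda(t)\tau(t)p(t)^{-\frac{m}{m-1}}$ and $f_q(t)=\lambda(t)\tau(t)^q\vol(t)^{-\alpha_q}$ using the vanishing of the first derivatives at the ball, and then substitute the spectral expressions of \autoref{derivate seconde} together with the explicit ball constants. Your two intermediate formulas for $g''(0)$ and $f_q''(0)$ coincide with those in the paper, so the remaining work is exactly the bookkeeping you describe.
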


\begin{proof}
    Since both $G$ and $F_q$ are invariant by translation and scaling, it is sufficient to compute the derivatives for $\varphi\in C^{2,\gamma}(\partial B_1)\cap T^0(\partial B_1)$. Hence, by \autoref{oss sferiche Ts} we assume that $\varphi_{k,l}=0$ for $k=0,1$.
    In the following, we will use the notation introduced in \autoref{notazione derivate seconde} and we consider the functions
\[g:t\in[0,1]\to G(B_{t\varphi})\in\RR,\qquad f_q:t\in[0,1]\to F_q(B_{t\varphi})\in\RR.\]

 By \autoref{oss calcolo l2}, we have that 
\[ \ell_2[G](\varphi,\varphi)=g''(0),\qquad\ell_2[F_q](\varphi,\varphi)=f_q''(0).\]

We start by computating $g''(0)$. Notice that $g(t)=\dfrac{\tau(t)\lambda(t)}{p(t)^{\frac{m}{m-1}}}$.

Differentiating two times and using that $\tau'(0)=p'(0)=\lambda'(0)=0$ we have that 
\[g''(0)=\dfrac{\lambda''(0)\tau(0)}{p(0)^{\frac{m}{m-1}}}+ \dfrac{\lambda(0)\tau''(0)}{p(0)^{\frac{m}{m-1}}}-\dfrac{m}{m-1}\dfrac{\lambda(0)\tau(0)}{p(0)^{\frac{m}{m-1}}}\left(\dfrac{p''(0)}{p(0)}\right).\]

Using \autoref{derivate seconde}, and the explicit values of the torsion of the unit ball 
\begin{equation}\label{torsione palla}
    T(B_1)=\frac{\omega_m}{m(m+2)},
\end{equation}
and of its perimeter, we obtain the following

\begin{equation*}
    \begin{split}
       g''(0)=\dfrac{1}{p(0)^{\frac{m}{m-1}}}\sum_{k=2}^\infty \sum_{l=1}^{N_k}\bigg\{\dfrac{\omega_m}{m(m+2)}\dfrac{\lambda(0)}{m\omega_m}\left[4k+2(m-1)-4c_k\right]-\dfrac{\lambda(0)}{m^2}\left[2k-(m+1)\right]\\
-\dfrac{m}{m-1}\lambda(0)\dfrac{\omega_m}{m(m+2)}\dfrac{1}{m\omega_m}\left[k^2+(m-2)k+(m-1)(m-2)\right]\bigg\}\varphi_{k,l}^2.
    \end{split}
\end{equation*}

By direct computation we have that 
\[g''(0)=-\sum_{k=2}^{\infty}\sum_{l=1}^{N_k}G_k\varphi_{k,l}^2.\]
where 
\[ G_k=\dfrac{1}{p(0)^{\frac{m}{m-1}}}\dfrac{\lambda(0)}{m^2(m+2)(m-1)}[mk^2+(3m^2-4m)k-7m^2+7m+4c_k(m-1)],\]
hence, we deduce \eqref{der G}.

We now want to compute $f_q''(0)$. We have that 
\[f_q(t)=\dfrac{\lambda(t)\tau(t)^q}{\vol^{\alpha_q}(t)}.\] Deriving twice and using that $\tau'(0)=\lambda'(0)=\vol'(0)=0$, we have that
\[f''_q(0)=\dfrac{\lambda''(0)\tau(0)^q}{\vol^{\alpha_q}(0)}+\dfrac{q\lambda(0)\tau(0)^{q-1}}{\vol^{\alpha_q}(0)}\tau''(0)-\alpha_q\dfrac{\lambda(0)\tau(0)^q}{\vol^{\alpha_q}\vol(0)}\dfrac{\vol''(0)}{\vol(0)}.\]

Using \autoref{derivate seconde} and equation \eqref{torsione palla}, we obtain 
\[f_q''(0)=\dfrac{\tau^{q-1}(0)}{\vol^{\alpha_q}(0)}\bigg\{\sum_{k=2}^\infty \sum_{l=1}^{N_k}\bigg\{\dfrac{\omega_m}{m(m+2)}\dfrac{\lambda(0)}{m\omega_m}\left[4k+2(m-1)-4c_k\right]\]
\[-q\dfrac{\lambda(0)}{m^2}\left[2k-(m+1)\right]-\alpha_q\lambda(0)\dfrac{\omega_m}{m(m+2)}\dfrac{(m-1)}{\omega_m}\bigg\}\varphi_{k,l}^2.\]
By direct computation, we have that 

\begin{equation}
    \begin{split}
        f''_q(0)=\dfrac{\tau(0)^{q-1}}{\vol^{\alpha_q}(0)}\dfrac{\lambda(0)}{m^2(m+2)}\sum_{k\geq 2}\sum_{l=1}^{N_k}\bigg\{2k[2-q(m+2)]+2(m-1)\\
        -4c_k+q(m+1)(m+2)-m\alpha_q(m-1)\bigg\}\varphi_{k,l}^2
    \end{split}
\end{equation}

Using that $m\alpha_q=q(m+2)-2$, we deduce 
\[f''_q(0)=\sum_{k=2}^\infty F^q_k\varphi_{k,l}^2.\]
where 
\[F_k^q= \dfrac{\tau(0)^{q-1}}{\vol^{\alpha_q}(0)}\dfrac{\lambda(0)}{m^2(m+2)}\{2k[2-q(m+2)]+2q(m+2)+4m-4-4c_k\},\]
hence, we have \eqref{der F_q}.

\end{proof}

\begin{oss}\label{oss cont l2}
    Notice that there exists a constant $r$ such that
    \[|\ell_2(G)(\varphi,\varphi)|\leq r\norm{\varphi}{H^1(\partial B_1)}^2.\]
Moreover, for all $q>0$ there exists a constant $r_q>0$ such that 
\[\text{if }q \not=\dfrac{2}{m+2}\quad|\ell_2(F_q)(\varphi,\varphi)|\leq r_q\norm{\varphi}{H^{\frac{1}{2}}(\partial B_1)}^2,\qquad \text{if }q=\dfrac{m}{m+2}\quad |\ell_2(F_q)(\varphi,\varphi)|\leq r_q\norm{\varphi}{L^{2}(\partial B_1)}^2.\]
Therefore, $\ell_2(G)$ admits a continuous extension to $H^1(\partial B_1)$, 
whereas $\ell_2(F_q)$ extends continuously to $H^{\frac{1}{2}}(\partial B_1)$ 
for $q \neq \frac{2}{m+2}$ and to $L^2(\partial B_1)$ when $q = \frac{2}{m+2}$.

\end{oss}

\begin{corollario}\label{Corollario G}
Let $G$ be the functional defined in \eqref{function_G}, then $B_1$ is a strictly stable shape in $H^1(\partial \om)$ for $-G$ in the sense of definition \autoref{defoptimal} and $-G$ satisfy the assumption \Csref at $B_1$ for $s=1$.
\end{corollario}
\begin{proof}
    We start by proving the result for $G$. By \autoref{der F eG} $\forall \varphi \in H^1(\partial B_1)$
    \[\ell_2(-G)(\varphi,\varphi)=\sum_{k=2}^{\infty}\sum_{l=1}^{N_k}G_k\varphi_{k,l}^2\]
where 
\[ G_k=\dfrac{1}{P(B_1)^{\frac{m}{m-1}}}\dfrac{\Lambda(B_1)}{m^2(m+2)(m-1)}[mk^2+(3m^2-4m)k-7m^2+7m+4c_k(m-1)].\]

We now aim to prove that for all $k\geq3 $
\begin{equation}\label{cresc G_k}
    G_k \geq G_2.
\end{equation}
Indeed, this inequality is equivalent to
\[
4(m-1)(c_k - c_2) \geq -\,m(k-2)(k+2+3m-4).
\]
Rewriting, this becomes
\begin{equation}\label{eq c_k 1}
    c_k \geq c_2 - \frac{1}{4} \frac{m}{m-1}({k + 3m - 2})(k-2).
\end{equation}

To estimate $c_k$ from below, we use the convex extension of the sequence 
$k \mapsto c_k$ described in \autoref{oss ck}. In particular, convexity implies
\[
c_k \ge c_2 + (c_2 - m)(k-2).
\]
Therefore, a sufficient condition for \eqref{eq c_k 1} is the inequality
\[
c_2 + (c_2 - m)(k-2)
    \;\ge\;
    c_2 - \frac{m}{4(m-1)}(k + 3m - 2)(k-2),
\]
which simplifies to
\[
m - c_2 \le \frac{m}{4(m-1)}(k + 3m - 2).
\]

This last inequality always holds for $k \ge 3$, since 
\( k + 3m - 2 > 0 \) and, by \eqref{c_2 magg m/2}, we have 
\( m - c_2 < \tfrac{m}{2} \). 
Hence, the desired inequality \eqref{cresc G_k} follows.

 We now observe that for all $k\geq 2$

\begin{equation}
    \begin{aligned}
        G_k \geq G_2 &= \dfrac{1}{P(B_1)^{\frac{m}{m-1}}}
        \dfrac{\Lambda(B_1)}{m^2(m+2)(m-1)}
        [4m+(3m^2-4m)2-7m^2+7m+4c_2(m-1)]\\
        &> \dfrac{1}{P(B_1)^{\frac{m}{m-1}}}
        \dfrac{\Lambda(B_1)}{m^2(m+2)(m-1)}
        [3m-m^2+4\dfrac{m}{2}(m-1)]\\
        &= \dfrac{1}{P(B_1)^{\frac{m}{m-1}}}
        \dfrac{\Lambda(B_1)}{m^2(m+2)(m-1)}
        [m^2+m] >0.
    \end{aligned}
\end{equation}

If $\varphi\in \mathcal{T}^1(\partial B_1)\setminus\{0\}$, there exist $k\geq 2$ and $l\in\{1,\dots N_k\}$ such that $\varphi_{k,l}\not=0$, hence we obtain 
\[\ell_2(-G)(\varphi,\varphi)>0.\]
Hence, by applying \autoref{oss cont l2}, we conclude that $B_1$ is a strictly stable shape for $-G$ in $H^1(\partial B_1)$. 
In addition, since $G_k$ has the same asymptotic behaviour as $(1+k^2)$ for $k$ that approaches $+\infty$, the functional $G$ satisfies assumption~\Csref{} with $s=1$.

\end{proof}

\begin{corollario}\label{Corollario F_q}
 Let $F_q$ be the functional defined in \eqref{fq}, with $q>0$, then the following properties hold.
    \begin{enumerate}[(i)]
        \item If $q<\dfrac{2}{m+2}$, $F_q$ verifies assumption \Csref on $B_1$ for $s=\frac{1}{2}$. Moreover and $B_1$ is a strictly stable shape in $H^{\frac{1}{2}}(\partial B_1)$ for $F_q$ (in the sense of \autoref{defoptimal}).
        \item If $q=\dfrac{2}{m+2}$, $F_q$ verifies assumption \Csref on $B_1$ for $s=0$. Moreover and $B_1$ is a strictly stable shape in $H^{0}(\partial B_1)$ for $F_q$ .
        \item If $\dfrac{2}{m+2}<q<q^*:=\dfrac{2}{m+2}\left(\dfrac{{j_{\frac{m}{2}}-1}^2}{m}-1\right)$, $B_1$ there exist $\varphi_1$ and $\varphi_2\in H^\frac{1}{2}(\partial B_1)$ such that 
        \[\ell_2(F_q)(\varphi_1,\varphi_1)>0\quad \text{and}\quad \ell_2(F_q)(\varphi_2,\varphi_2)<0.\]
        \item If $q=q^*:=\dfrac{2}{m+2}\left(\dfrac{j_{\frac{m}{2}-1}}{m}-1\right)$, for all  $\varphi\in H^\frac{1}{2}(\partial\om)$  \[\ell_2(F_q)(\varphi,\varphi)\leq0.\]
        Moreover, there exists $\varphi_1\in \mathcal{T}^{\frac{1}{2}}(\partial B_1)\setminus\{0\}$ such that  
        \[\ell_2(F_q)(\varphi_1,\varphi_1)=0.\]
        \item If $q>q^*$,  $-F_q$ verifies assumption \Csref on $B_1$ for $s=\frac{1}{2}$. Moreover and $B_1$ is a strictly stable shape in $H^{\frac{1}{2}}(\partial B_1)$ for $-F_q$.
    \end{enumerate}
\end{corollario}

\begin{proof}
By \autoref{der F eG}, we have 
\[\ell_2(F_q)(B_1)(\varphi,\varphi)=\sum_{k=2}^\infty\sum_{l=1}^{N_k}F^q_k \varphi_{k,l}^2.\]
where 
\[F_k^q= \alpha\{k[2-q(m+2)]+q(m+2)+2m-2-2c_k\},\quad \text{and}\quad\alpha=2\dfrac{T(B_1)^{q-1}}{\Vol^{\alpha_q}(B_1)}\dfrac{\Lambda(B_1)}{m^2(m+2)}.\]

First of all, we observe that if $q_1<q_2$, then for all $k\geq 2$
\begin{equation}\label{monotonia F_k in q}
    F_k^{q_1}>F_k^{q_2}.
\end{equation}

We analyse the different cases.
    \begin{enumerate}
        \item[(i)] Let $q\leq \dfrac{2}{m+2}$. Since $\set{c_k}$ is decreasing,  we have that if $k_1\leq k_2$, then
\[F^q_{k_1}\leq F_{k_2}^q.\]

Therefore, since $F_k^q$ is increasing in $k$ and decreasing in $q$, $F_k^q>0$ for all $k\geq 2$ and for all $q\leq \dfrac{2}{m+2}$ if and only if $F^\frac{2}{m+2}_2>0$. 
This inequality holds, since, by \autoref{oss ck},
\begin{equation}\label{termine noto positivo}
   F^\frac{2}{m+2}_2= \alpha(4+4m-4-4c_2)>0.
\end{equation}

By \eqref{termine noto positivo} and \autoref{oss cont l2}, if $q < \frac{2}{m+2}$, the ball $B_1$ is a strictly stable shape for 
$F_q$ in $H^{\frac{1}{2}}(\partial B_1)$.

Moreover, since $2-q(m+2)>0$, $F_k^q$  has the same asymptotic behaviour as $(1+k^2)^{\frac{1}{2}}$ for $k$ that approaches $+\infty$, the functional $F_q$ satisfies assumption~\Csref{} with $s=\frac{1}{2}$.

\item[(ii)]

Let $q=\dfrac{2}{2+m}$. Then $2-q(m+2)=0$ and for all $\varphi\in T^0(\partial B_1)$
\[\ell_2(F_q)(\varphi,\varphi)\geq r\norm{\varphi}{L^2(\partial B_1)}^2,\]
where 
\[r=\alpha\{2m-2c_2\}.\]
Therefore, using \autoref{oss cont l2} $F_q$ satisfies assumption~\Csref{} with $s=0$ and the ball $B_1$ is a strictly stable shape for 
$F_q$ in $H^{\frac{1}{2}}(\partial B_1)$.

\item [(iii)]
Let $\dfrac{2}{2+m}<q<q^*$ then we have that $[2-q(m+2)]<0$, so there exists $k^*$ such that for $k>k^*$,  $F_k^q<0$. 

Moreover, notice that $q*$ is such that 
\[F_2^{q*}=0.\]
Hence,by \eqref{monotonia F_k in q}, we deduce that for all $\dfrac{2}{m+2}<q<q*$ 
\[ F_2^q>F^{q*}_2=0.\]
Since $F_2^q>0$ and for $k$ sufficiently large $F_k^q<0$, $B_1$ is a saddle point for $F_q$.

\item[(iv)] 
Let $q\geq q^*$. We want to prove that for all $k>2$
\begin{equation}\label{mon Fkq}
    F_k^q<F_2^q
\end{equation}
This inequality holds if and only if 

\[c_k>\dfrac{k-2}{2}[2-q(m+2)]+c_2.\]
This inequality holds, indeed by \autoref{oss ck} 
\[\dfrac{k-2}{2}[2-q(m+2)]+c_2\leq (k-2)(c_2-m)+c_2=(k-1)(c_2-m)+c_1<c_k.\]
The last inequality follows by the existence of a convex extension of the function $k\in\mathbb{N}\to c_k\in\RR$. Hence, we have \eqref{mon Fkq}.\\

Furthermore, if $q=q^*$, $F_2^q=0$ and for all  $\varphi\in H^{\frac{1}{2}}(\partial \om)$
\[\ell_2(F_q)(\varphi,\varphi)\leq0.\]      Moreover if $\psi\in \mathcal{T}^{\frac{1}{2}}(\partial B_1)\setminus\{0\}$ such that for all $k\geq 3$ $\psi_{k,l}=0$, we have that   
\[\ell_2(F_q)(\psi,\psi)=0. \]

\item[(v)]
As a consequence, if $q>q^*$, for all  $\varphi\in \mathcal{T}^\frac{1}{2}(\partial \om)$ we have that  \[\ell_2(F_q)(\varphi,\varphi)>0.\]

Hence, using \autoref{oss cont l2}, the ball $B_1$ is a strictly stable shape for $-F_q$ in $H^{\frac{1}{2}}(\partial B_1)$.
Moreover, $-F_k^q$  has the same asymptotic behaviour as $(1+k^2)^{\frac{1}{2} }$ for $k$ that approaches $+\infty$. This ensures the functional $-F_q$ satisfies assumption~\Csref{} with $s=\frac{1}{2}$.

\end{enumerate}

\end{proof}

\subsection{Proof of  \autoref{Optimality of the ball} and \autoref{teor F_q}}
\begin{proof}[Proof of \autoref{Optimality of the ball}]
The thesis follows by applying \autoref{Teorema DL} to $-G$, as guaranteed by \autoref{Corollario G} and \autoref{proposizione IC}.

\end{proof}

\begin{proof}[Proof of \autoref{teor F_q}]
    The statement in (i) follows from the fact that, by 
    \autoref{Corollario F_q} and \autoref{proposizione IC}, the functional 
    $F_q$ satisfies on $B_1$ all the assumptions of \autoref{Teorema DL}.

    Statement (ii) is directly proved in \autoref{Corollario F_q}.

    Finally, the conclusion in (iii) again follows from the validity of the
    assumptions of \autoref{Teorema DL} for $-F_q$, ensured by 
    \autoref{Corollario F_q} and \autoref{proposizione IC}.
\end{proof}

\begin{oss}
    For the Kohler--Jobin case $q = \tfrac{2}{m+2}$ we only obtain
    $L^2(\partial B_1)$–coercivity together with improved continuity of the second
    derivative in $H^{1/2}(\partial B_1)$, which is not sufficient to deduce stability.

   If \( q = q^* \), coercivity fails, and therefore no conclusion on the local optimality of \( B_1 \) can be drawn.

\end{oss}

\section{Possible generalization}
\label{sec6}
Since the functional $F_q(\Omega)$ (defined in \eqref{fq}) can be seen as a generalization of the P\'olya functional (defined in \eqref{polyafunc}), it is natural to introduce the following scaling–invariant generalization of $G(\Omega)$:
\begin{equation}\label{def G_q}
    G_q(\om):=\dfrac{T(\om)^q\Lambda(\om)}{P(\om)^{\beta_q}},\text{ where } \beta_q=\dfrac{1}{m-1}[q(m+2)-2],
\end{equation}
for every $\Omega \in \mathcal{O}$ and all $q>0$.  

In particular, for $q=\tfrac{2}{m+2}$ the functional $G_q$ coincides with the Kohler–Jobin functional and with $F_q$, while for $q=1$ it reduces to $G$.

Our aim is to study the optimization of $G_q$ over the classes $\mathcal{O}$, $\mathcal{C}$, and $S_{\delta,\gamma}$, respectively defined in \eqref{openperimeter}, \eqref{open and convex} and \autoref{def Sdg}.

\subsection{Optimization in the class of open sets}
In this section we study the optimization of the functionals $G_q$ over the class $\mathcal{O}$.
\begin{prop}
Let $G_q$ be the functional defined in \eqref{def G_q}, and  $\mathcal{O}$ the class of domains defined in \eqref{openopt}. Then the following statements hold:
\begin{enumerate}
    \item If $q \leq \dfrac{2}{m+2}$, then 
    \[
        \min_{\mathcal{O}} G_q = G_q(B_1).
    \]
    \item If $q > \dfrac{2}{m+2}$, then 
    \[
        \inf_{\mathcal{O}} G_q = 0.
    \]
    \item If $q < 1$, then 
    \[
        \sup_{\mathcal{O}} G_q = +\infty.
    \]
    \item If $q \geq 1$, then 
    \[
        \sup_{\mathcal{O}} G_q < +\infty.
    \]
\end{enumerate}
\end{prop}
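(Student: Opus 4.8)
The plan is to establish the four assertions separately, each time reducing to a known sharp inequality by means of a scale-invariant factorization of $G_q$. Comparing exponents of $T$, $\Lambda$ and $P$ and recalling $\beta_q=\tfrac{q(m+2)-2}{m-1}$, one checks the two identities
\[
G_q(\Omega)=\bigl(T(\Omega)^{\frac{2}{m+2}}\Lambda(\Omega)\bigr)^{\frac{q(m+2)}{2}}\bigl(P(\Omega)^{\frac{2}{m-1}}\Lambda(\Omega)\bigr)^{1-\frac{q(m+2)}{2}},\qquad
G_q(\Omega)=G(\Omega)\,\bigl(T(\Omega)\,P(\Omega)^{-\frac{m+2}{m-1}}\bigr)^{q-1}.
\]
I will also use the elementary relation $G_q(\Omega)=F_q(\Omega)\,V(\Omega)^{\alpha_q}P(\Omega)^{-\beta_q}$ and the scale invariance of $F_q$ and $G_q$.

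For (1) I would use the first identity: when $0<q\le\frac{2}{m+2}$ both exponents $\tfrac{q(m+2)}{2}$ and $1-\tfrac{q(m+2)}{2}$ are nonnegative, so the Kohler--Jobin inequality \eqref{K-J} (which bounds the first factor from below by its value on the ball) together with the isoperimetric form \eqref{faberkranper} of the Faber--Krahn inequality (which bounds the second factor from below by its value on the ball) give $G_q(\Omega)\ge G_q(B_1)$ for every $\Omega\in\mathcal O$; since $B_1\in\mathcal O$, the minimum is attained and equals $G_q(B_1)$. For $q=\frac{2}{m+2}$ this is simply \eqref{K-J}, since then $\beta_q=\alpha_q=0$ and $G_q=F_q=T^{2/(m+2)}\Lambda$. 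For (2), where $\beta_q>0$, I would take a sequence $\Omega_n\in\mathcal O$ with $F_q(\Omega_n)\to 0$ — such a sequence exists by the results recorded in Table~\ref{tab:Fq_properties} — normalized so that $V(\Omega_n)=1$ using scale invariance of $F_q$; the isoperimetric inequality then gives $P(\Omega_n)\ge P(B^*)$, with $B^*$ the unit-volume ball, so $0\le G_q(\Omega_n)=F_q(\Omega_n)P(\Omega_n)^{-\beta_q}\le P(B^*)^{-\beta_q}F_q(\Omega_n)\to 0$, whence $\inf_{\mathcal O}G_q=0$.

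For (4) I would use the second identity: since $q\ge 1$ the exponent $q-1$ is nonnegative, so the isoperimetric form \eqref{sanvenper} of the Saint--Venant inequality bounds $T(\Omega)P(\Omega)^{-\frac{m+2}{m-1}}$ from above by its value on the ball, and multiplying this by the bound $G(\Omega)<m^{m/(m-1)}\omega_m^{-1/(m-1)}$ from \autoref{openopt} produces a finite upper bound for $G_q$ over $\mathcal O$. Assertion (3) is the only one requiring real work, and I would obtain it by revisiting the homogenization construction in the proof of \autoref{openopt} with $\Omega=B_1$: the perforated domains $\Omega_n=B_1\setminus\bigcup_i E_i$ considered there satisfy $\Lambda(\Omega_n)\to c+\Lambda(B_1)$, $P(\Omega_n)$ stays within a fixed multiple of $P(B_1)$ (by the perimeter estimate established there), and $T(\Omega_n)\to T(B_1,\mu)\ge \dfrac{\kappa(\delta)}{\delta^{-2}+c}$ for some $\kappa(\delta)>0$ (this is precisely the lower bound on $T(\Omega,\mu)$ obtained in that proof); hence
\[
\liminf_{n\to\infty}G_q(\Omega_n)\ \ge\ \kappa'(\delta)\,\bigl(c+\Lambda(B_1)\bigr)\,\bigl(\delta^{-2}+c\bigr)^{-q},
\]
and, fixing $\delta$ and letting $c\to\infty$, the right-hand side grows like a constant times $c^{1-q}\to+\infty$ because $q<1$, so $\sup_{\mathcal O}G_q=+\infty$.

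I expect (3) to be the main obstacle: it is the only case that does not follow from a purely algebraic combination of the Kohler--Jobin, Faber--Krahn and Saint--Venant inequalities with \autoref{openopt}, and instead needs a careful re-examination of the homogenization sequence — in particular, one must check that $T(\Omega_n)$ decays only like $c^{-1}$ while $\Lambda(\Omega_n)$ grows like $c$ and $P(\Omega_n)$ stays bounded, so that the product $\Lambda(\Omega_n)T(\Omega_n)^{q}$ blows up exactly in the regime $q<1$.
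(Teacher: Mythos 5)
Your proposal is correct, and for parts (3) and (4) it coincides with the paper's argument: the same identity $G_q=G\,\bigl(T\,P^{-\frac{m+2}{m-1}}\bigr)^{q-1}$ combined with \eqref{sanvenper} and \autoref{openopt} for (4), and the same homogenization sequence with $\Lambda(\Omega_n)\to c+\Lambda(B_1)$, $T(\Omega_n)\to T(B_1,\mu)\gtrsim (\delta^{-2}+c)^{-1}$ and bounded perimeter, giving blow-up like $c^{1-q}$, for (3). The differences are in (1) and (2). For (1) the paper factors $G_q=\bigl(T^{\frac{2}{m+2}}\Lambda\bigr)\bigl(P^{-\frac{m+2}{m-1}}T\bigr)^{-(\frac{2}{m+2}-q)}$ and uses Kohler--Jobin \eqref{K-J} together with Saint--Venant \eqref{sanvenper}, whereas you use the factorization $\bigl(T^{\frac{2}{m+2}}\Lambda\bigr)^{\frac{q(m+2)}{2}}\bigl(P^{\frac{2}{m-1}}\Lambda\bigr)^{1-\frac{q(m+2)}{2}}$ and Kohler--Jobin plus Faber--Krahn \eqref{faberkranper}; your exponent bookkeeping checks out ($T^q\Lambda P^{-\beta_q}$), both exponents are nonnegative exactly when $q\le\frac{2}{m+2}$, so this is an equally valid, interchangeable route. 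For (2) the paper constructs an explicit sequence (a unit ball together with $N\sim\varepsilon^{-(m+2)}$ balls of radius $\varepsilon$) and computes $G_q$ directly, which is self-contained; you instead invoke the known fact $\inf_{\mathcal{O}}F_q=0$ for $q>\frac{2}{m+2}$ (recorded in Table~\ref{tab:Fq_properties} from the literature), normalize the volume and use the isoperimetric inequality through $G_q=F_q\,V^{\alpha_q}P^{-\beta_q}$ with $\beta_q>0$. This is logically sound but outsources the key degeneration to a cited result, while the paper's construction proves it from scratch; if you want your proof to be independent of the table, the paper's many-small-balls example is the missing elementary ingredient.
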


\begin{proof}
(1) Let $q \leq \dfrac{2}{m+2}$ and $\Omega \in \mathcal{O}$. Then
\[
    G_q(\Omega) = \big(T(\Omega)^{\frac{2}{m+2}}\Lambda(\Omega)\big)
    \left(\dfrac{1}{P(\Omega)^{-\frac{m+2}{m-1}}T(\Omega)}\right)^{\frac{2}{m+2}-q}.
\]
By the Kohler-Jobin inequality \eqref{K-J} and equation \eqref{sanvenper}, it follows that
\[
    G_q(B_1) \leq G_q(\Omega),
\]
hence $\min_{\mathcal{O}} G_q = G_q(B_1)$.

\medskip
(2) Let $q > \dfrac{2}{m+2}$. Consider $\Omega$ as the disjoint union of $N$ balls of fixed radius $\varepsilon > 0$ and $B_1$. Then, for $\varepsilon <1$,
\[
    G_q(\Omega) = \dfrac{\Lambda(B_1)(T(B_1)+N\varepsilon^{m+2}T(B_1))^q}{(P(B_1)+N\varepsilon^{m-1}P(B_1))} 
    = G_q(B_1)\dfrac{(1+N\varepsilon^{m+2})^q}{(1+N\varepsilon^{m-1})^{\beta_q}}.
\]
Choosing $N$ such that $\varepsilon^{-(m+2)} \leq N < 2\varepsilon^{-(m+2)}$, we obtain
\[
    G_q(\Omega) \leq G_q(B_1)\dfrac{3^q}{(1+\varepsilon^{-3})^{\beta_q}}.
\]
Since $\beta_q > 0$, this quantity tends to $0$ as $\varepsilon \to 0$, thus
\[
    \inf_{\mathcal{O}} G_q = 0.
\]

\medskip
(3) Let $q < 1$. Consider the sequence $\{\Omega_n\}$ of open sets introduced in the proof of \autoref{openopt} via a homogenization argument. Then, using the same estimates as in \autoref{openopt}, we have
\[
    \lim_{h \to \infty} G_q(\Omega_n) 
    = \lim_{h \to \infty} \dfrac{\Lambda(\Omega_n)T(\Omega_n)^q}{P(\Omega_n)^{\beta_q}}
    \geq \dfrac{\Lambda(\Omega,\mu)T(\Omega,\mu)^q}{P(\Omega)^{\beta_q}}
    \geq (c + \Lambda(\Omega))
    \left(\dfrac{\Vol(\Omega_\delta)^2}{\Vol(\Omega)}\dfrac{1}{\delta^{-2}+c}\right)^q
    \dfrac{1}{P(\Omega)^{\beta_q}}.
\]
Letting $\delta \to 0$ and $c \to \infty$, we conclude that $\sup_{\mathcal{O}} G_q = +\infty$.

\medskip
(4) Finally, if $q \geq 1$, then by \autoref{openopt} and equation \eqref{sanvenper}, for every $\Omega \in \mathcal{O}$,
\[
    G_q(\Omega) = G(\Omega)
    \left(P(\Omega)^{-\frac{m+2}{m-1}}T(\Omega)\right)^{q-1}
    < \dfrac{\Vol(B_1)}{P(B_1)^{\frac{m}{m-1}}}
    \left(P(B_1)^{-\frac{m+2}{m-1}}T(B_1)\right)^{q-1},
\]
which shows that $\sup_{\mathcal{O}} G_q < +\infty$.
\end{proof}

We recap the properties so far proven in \autoref{tab:Gq_propertiesopen}.

\begin{table}[H]
\centering
\renewcommand{\arraystretch}{1.3}
\begin{tabular}{|c|c|c|c|}
\hline
\( q \leq \frac{2}{m+2} \) & \( \frac{2}{m+2} < q < 1 \) & \( q = 1 \)& \( q > 1 \) \\
\hline
 \(\min_{\mathcal{O}} G_q = G_q(B_1)\) & \(\inf_{\mathcal{O}} G_q = 0\) &  \(\inf_{\mathcal{O}} G_q = 0\) & \(\inf_{\mathcal{O}} G_q = 0\) \\
 \hline
\(\sup_{\mathcal{O}} G_q = +\infty\) & \(\sup_{\mathcal{O}} G_q = +\infty\) & \(\sup_{\mathcal{O}} G_q  =\dfrac{\Vol(B_1)}{P(B_1)^{\frac{m}{m-1}}}\) & \(\sup_{\mathcal{O}} G_q  <+\infty\) \\
\hline
\end{tabular}
\caption{Summary of infimum and supremum values of the functionals \(G_q\) over the class $\mathcal{O}$ for different ranges of \(q\).}
\label{tab:Gq_propertiesopen}
\end{table}
\subsection{Optimization in the class of convex sets}

We now aim to optimize the functionals $G_q$ over the class of convex sets $\mathcal{C}$. To this end, we study the behavior of $G_q$ along sequences of thinning sets (defined in \autoref{thinset}).  

First, we observe that for every $\om \in \mathcal{C}$, the functional $G_q$ can be rewritten as the product of three terms:
\begin{equation}\label{G_q scrittura}
    G_q(\om)
    = \left(\frac{\Lambda(\om) T(\om)}{\Vol(\om)} \right)^q
      \left(\frac{\Lambda(\om)\, \Vol(\om)^2}{P(\om)^2}\right)^{1-q}
      \left(\frac{P(\om)^{\frac{m}{m-1}}}{\Vol(\om)}\right)^{2-3q}.
\end{equation}

It is known that for all $\om \in \mathcal{C}$ one has
\begin{equation}\label{bound Polya eig}
    \frac{\pi^2}{4m^2} < \frac{\Lambda(\om)\Vol(\om)^2}{P(\om)^2} < \frac{\pi^2}{4}.
\end{equation}
The lower bound was proved in \cite{M62} in the planar case, and it is attained by a sequence of thinning triangles. The result was generalized in \cite{DPDBG18} to higher dimensions. The upper bound was proven in \cite{polya1960two} in the planar case and is sharp for a sequence of thinning rectangles. Its generalization to higher dimensions was obtained in \cite{DPN14}, with quantitative refinements in \cite{AGS,AMPS}.  

Hence, using \eqref{bound polya}, \eqref{bound Polya eig}, and \eqref{G_q scrittura}, we obtain that for all $\om \in \mathcal{C}$:
\begin{equation}\label{bound gq convex}
\left(\frac{\pi^2}{4m(m+2)}\right)^q \left(\frac{\pi^2}{4m^2}\right)^{1-q} 
\left(\frac{P(\om)^{\frac{m}{m-1}}}{\Vol(\om)}\right)^{2-3q} 
< G_q(\om) 
< (1-\varepsilon)^q \left(\frac{\pi^2}{4}\right)^{1-q} 
\left(\frac{P(\om)^{\frac{m}{m-1}}}{\Vol(\om)}\right)^{2-3q}.
\end{equation}

Then, considering a sequence of thinning sets $\{\om_n\}$, and \eqref{vol  per thin}, we have that 
\[
\lim_n\frac{P(\om_n)^{\frac{m}{m-1}}}{\Vol(\om_n)}=0 .
\]
Hence, by \eqref{bound gq convex}, we have the following:
\begin{enumerate}[(i)]
    \item[(a)] If $q < \frac{2}{3}$, then
    \[
    \lim_{n \to \infty} G_q(\om_n) = +\infty \quad \text{and} \quad \sup_{\mathcal{C}} G_q = +\infty.
    \]
    \item[(b)] If $q > \frac{2}{3}$, then
    \[
    \lim_{n \to \infty} G_q(\om_n) = 0 \quad \text{and} \quad \inf_{\mathcal{C}} G_q = 0.
    \]
\end{enumerate}

Hence, we deduce the following proposition.

\begin{prop}
\label{6.2}
Let $G_q$ be the functional defined in \eqref{def G_q} and $\mathcal{C}$ the class defined in \eqref{open and convex}. The following properties hold.
\begin{enumerate}[(i)]
    \item If $q < \frac{2}{3}$, there exists $\om \in \mathcal{C}$ such that
    \[
    G_q(\om) = \min_{\mathcal{C}} G_q.
    \]
    \item If $q=\frac{2}{3}$, then 
    \[0<\inf_{\mathcal{C}}G_q<\sup_{\mathcal{C}}G_q<+\infty\]
    \item If $q > \frac{2}{3}$, there exists $\om \in \mathcal{C}$ such that
    \[
    G_q(\om) = \max_{\mathcal{C}} G_q.
    \]

\end{enumerate}
\end{prop}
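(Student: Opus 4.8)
The plan is to handle (i) and (iii) by adapting the compactness argument in the proof of \autoref{esistenza maX G}, and to handle (ii) by reading off the bounds from \eqref{bound gq convex} and supplementing them with a short local analysis of $G_{2/3}$ at the ball.

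For (i) I would fix $q<2/3$, so that $2-3q>0$. The lower bound in \eqref{bound gq convex} together with the isoperimetric inequality (which keeps $P(\om)^{\frac{m}{m-1}}/\Vol(\om)$ above a positive dimensional constant on $\mathcal{C}$) gives $0<\inf_{\mathcal{C}}G_q\le G_q(B_1)<+\infty$. Then, taking a minimizing sequence $\{\om_n\}\subset\mathcal{C}$ and normalizing (by scale invariance) $P(\om_n)=1$, the lower bound in \eqref{bound gq convex} becomes $G_q(\om_n)>c(m,q)\,\Vol(\om_n)^{-(2-3q)}$, so boundedness of $G_q(\om_n)$ forces $\Vol(\om_n)\ge c'>0$. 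By \eqref{pmagrd} this yields a uniform bound $\diam(\om_n)\le C(m)\,\Vol(\om_n)^{-(m-2)}\le C(m)(c')^{-(m-2)}$, so up to translations the $\om_n$ lie in a fixed ball; Blaschke's selection theorem then produces a subsequence converging in the Hausdorff distance to a convex set $\tilde{\om}$ with $\Vol(\tilde{\om})\ge c'>0$, hence $\tilde{\om}\in\mathcal{C}$. As in \autoref{esistenza maX G}, Hausdorff convergence of convex bodies gives $\Lambda(\om_n)\to\Lambda(\tilde{\om})$, $T(\om_n)\to T(\tilde{\om})$ and $P(\om_n)\to P(\tilde{\om})$, so $G_q(\tilde{\om})=\inf_{\mathcal{C}}G_q=\min_{\mathcal{C}}G_q$. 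Part (iii) is the mirror image: for $q>2/3$ one has $2-3q<0$, the upper bound in \eqref{bound gq convex} and the isoperimetric inequality give $0<G_q(B_1)\le\sup_{\mathcal{C}}G_q<+\infty$, and for a maximizing sequence with $P(\om_n)=1$ the same inequality reads $G_q(\om_n)<c(m,q)\,\Vol(\om_n)^{3q-2}$, which again forces $\Vol(\om_n)\ge c'>0$; the remaining compactness and continuity steps are identical and produce a maximizer $\tilde{\om}\in\mathcal{C}$.

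For (ii) the exponent $2-3q$ vanishes, so \eqref{bound gq convex} collapses to the dimensional two-sided bound
\[
\left(\frac{\pi^2}{4m(m+2)}\right)^{2/3}\left(\frac{\pi^2}{4m^2}\right)^{1/3}<G_{2/3}(\om)<(1-\varepsilon)^{2/3}\left(\frac{\pi^2}{4}\right)^{1/3},\qquad\om\in\mathcal{C},
\]
giving at once $0<\inf_{\mathcal{C}}G_{2/3}$ and $\sup_{\mathcal{C}}G_{2/3}<+\infty$. To get $\inf_{\mathcal{C}}G_{2/3}<\sup_{\mathcal{C}}G_{2/3}$ I would show that $B_1$ is not a local minimizer of $G_{2/3}$. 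Since all first shape derivatives of $T$, $\Lambda$ and $P$ vanish at the ball, $B_1$ is a critical shape for $G_{2/3}$ (as in \autoref{lemma shape critica}); differentiating $t\mapsto G_{2/3}(B_{t\varphi})$ twice exactly as in the proof of \autoref{der F eG} and inserting the second derivatives of \autoref{derivate seconde}, one finds, for $\varphi\in\mathcal{T}^0(\partial B_1)\cap C^\infty$ with spherical-harmonic coefficients $\varphi_{k,l}$, that this function is $C^2$, with vanishing first derivative at $0$ and second derivative at $0$ equal to $\sum_{k\ge2}\sum_l\mathcal{G}_k\,\varphi_{k,l}^2$, where
\[
\mathcal{G}_k=-\frac{\beta_{2/3}\,\Lambda(B_1)\,T(B_1)^{2/3}}{P(B_1)^{\beta_{2/3}+1}}\,k^2+O(k)\qquad(k\to\infty),\qquad\beta_{2/3}=\frac{2}{3}>0,
\]
the $k^2$ term coming only from $p''(0)$. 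Hence $\mathcal{G}_k<0$ for $k$ large; taking such a $k$ and $\varphi=Y^{k,1}$ (which lies in $\mathcal{T}^0(\partial B_1)$ because $k\ge2$), the set $B_{t\varphi}$ is convex for $|t|$ small and $t\mapsto G_{2/3}(B_{t\varphi})$ has a strict local maximum at $t=0$, so $\inf_{\mathcal{C}}G_{2/3}<G_{2/3}(B_1)\le\sup_{\mathcal{C}}G_{2/3}$.

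\textbf{Main obstacle.} In (i) and (iii) the crux is guaranteeing a non-degenerate Hausdorff limit: this is precisely where the sign of $2-3q$ enters, through \eqref{bound gq convex}, to produce the volume lower bound, after which \eqref{pmagrd}, Blaschke compactness and the standard continuity of $\Lambda$, $T$ and $P$ on convex bodies close the argument. In (ii) the only non-soft point is the strict inequality $\inf<\sup$: the bounds themselves are immediate from \eqref{bound gq convex}, but excluding that $G_{2/3}$ is constant forces the (routine but genuine) second-derivative computation at $B_1$ above, in which the negativity of the $k^2$ coefficient rests on $\beta_{2/3}=\frac{2}{3}>0$ (cf. \eqref{def G_q}).
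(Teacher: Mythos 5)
Your proposal is correct, and for parts (i) and (iii) it is essentially the paper's own argument: the paper excludes collapsing (thinning) extremizing sequences via \eqref{bound gq convex} together with \eqref{vol  per thin} and then invokes ``arguments analogous to \autoref{esistenza maX G}''; your normalization $P(\om_n)=1$, the volume lower bound $\Vol(\om_n)\ge c'>0$ extracted from \eqref{bound gq convex}, the diameter bound from \eqref{pmagrd}, Blaschke compactness and the continuity of $\Lambda$, $T$, $P$ along Hausdorff-convergent convex bodies are exactly that analogous argument, just written out. The one genuine difference is in (ii): the paper only derives the two-sided dimensional bounds for $\tilde G=G_{2/3}^3$ from \eqref{bound polya} and \eqref{bound Polya eig} and stops there, leaving the strict inequality $\inf_{\mathcal{C}}G_{2/3}<\sup_{\mathcal{C}}G_{2/3}$ implicit, whereas you prove it by showing $B_1$ is not a local minimizer through the second variation, with the negative $k^2$ coefficient coming from $-\beta_{2/3}\,p''(0)$ and $\beta_{2/3}=\tfrac23>0$; this computation is sound (it is the $q=\tfrac23$ case of \autoref{lemma der seconde Gq}) and could alternatively be replaced by citing \autoref{Corollario G_q}, since $\tfrac23>\tfrac{2}{m+2}$ puts $q=\tfrac23$ in a range where the ball is not a local minimum. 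So your write-up is, if anything, slightly more complete than the paper's on point (ii), at the cost of redoing a second-derivative expansion the paper already has in stock.
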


\begin{proof}
If $q \ne \frac{2}{3}$, the behaviour of $G_q$ along sequences of thinning sets is straightforward:  
it tends to $0$ when $q > \tfrac{2}{3}$, and it diverges to $+\infty$ when $q < \tfrac{2}{3}$.  
Therefore, by arguments analogous to those used in \autoref{esistenza maX G},  
we immediately obtain statements \((i)\) and \((iii)\) of \autoref{6.2}.

The case $(ii)$, i.e. $q = \frac{2}{3}$, corresponds to the optimization of the functional
\[
\tilde{G}(\om) = \frac{T(\om)^2 \Lambda(\om)^3}{P(\om)^2} = G_{\frac{2}{3}}(\om)^3.
\]
From \eqref{G_q scrittura}, we have
\[
\tilde{G}(\om) = F(\om)^2 \left( \frac{\Lambda(\om) \Vol(\om)^2}{P(\om)^2} \right).
\]
Hence, by \eqref{bound polya} and \eqref{bound Polya eig}, for all $\om \in \mathcal{C}$ it holds that
\[
\left(\frac{\pi^2}{4m(m+2)}\right)^2 \left(\frac{\pi^2}{4m^2}\right)
< \tilde{G}(\om)
< (1-\varepsilon)^2 \left(\frac{\pi^2}{4}\right),
\]
hence, the thesis.
\end{proof}
\vspace{2mm}

We summarize the properties proven so far in ~\autoref{tab:Gq_propertiesconvex}.

\begin{table}[H]
\centering
\renewcommand{\arraystretch}{1.3}
\begin{tabular}{|c|c|c|c|}
\hline
\( q \leq \frac{2}{m+2} \) & \( \frac{2}{m+2} < q < \frac{2}{3} \) & \( q = \frac{2}{3} \)& \( q > \frac{2}{3} \) \\
\hline
 \(\min_{\mathcal{C}} G_q = G_q(B_1)\) & \(\min_{\mathcal{C}} G_q > 0\) &  \(\inf_{\mathcal{C}} G_q >0 \) & \(\inf_{\mathcal{C}} G_q = 0\) \\
 \hline
\(\sup_{\mathcal{C}} G_q = +\infty\) & \(\sup_{\mathcal{C}} G_q = +\infty\) & \(\sup_{\mathcal{C}} G_q  <+\infty\) & \(\max_{\mathcal{C}} G_q  <+\infty\) \\
\hline
\end{tabular}
\caption{Summary of infimum and supremum values of the functionals \(G_q\) over the class $\mathcal{C}$ for different ranges of \(q\).}
\label{tab:Gq_propertiesconvex}
\end{table}

\subsection{Optimization in the class of nearly spherical sets}
We have studied the local properties of the functional $G_q$ around the ball, obtaining the following result. 
\begin{teorema}\label{teor G_q}Let $G_q$ be the functional defined in \eqref{def G_q} for $q>0$, and let $\gamma>0$. Then the following properties hold:
\begin{enumerate}[(i)]
    \item If $q<\dfrac{2}{m+2}$,  then there exists $\delta>0$ and a constant $C=C(m,\delta)$  such that for every $B_h\in S_{\delta,\gamma}$ (defined in \ref{sdg})
\[G_q(B_h)-G_q(B_1)\geq C\norm{h}{H^1(\partial \om)}^2.\]
\item  If $\dfrac{2}{m+2}<q<q':=\dfrac{2}{m+2}\left[1+2\dfrac{m-1}{3m-1}\left(\dfrac{j_{\frac{m}{2}-1}}{m}-2\right)\right]$, $B_1$ is neither a local maximum or a local minimum, i.e. for all $\delta>0$ there exist $B_{h_1},\,B_{h_2}\in S_{\delta,\gamma} $ such that 
\[G_q(B_{h_1})<G_q(B_1)\quad \text{and}\quad G_q(B_{h_2})>G_q(B_1). \]
\item If $q>q' $,  then there exists $\delta>0$ and a constant positive $C=C(m,\delta)$  such that for every $B_h\in S_{\delta,\gamma}$
\[G_q(B_1)-G_q(B_h)> C\norm{h}{H^1(\partial \om)}^2.\]
\end{enumerate}
\end{teorema}

\begin{oss}
It is possible to observe for the functionals $G_q$
 the same local behaviour that we observed in \autoref{oss fq loc} for the functionals $F_q$. Indeed, in every dimension, it is possible to find an interval of values for $q$ in which $B_1$ is asaddle point. 
Moreover, we notice a distinct change in behaviour: while $B_1$ stops being both a local and a global minimum at \( q = \dfrac{2}{m+2} \), $B_1$ becomes a local maximum before turning into a global one as $q$ increases
\end{oss}

In order to prove \autoref{teor G_q}, we use the same techniques as in \autoref{optimization nearly spherical}, hence we prove the following lemmas.
\begin{lemma}
      Let $q>0$. $B_1$ is a critical shape for $G_q$ i.e. it holds that 
    \[\ell_1[G_q](B_1)\equiv0.\]
\end{lemma}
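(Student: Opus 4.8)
The plan is to exploit the scale invariance of $G_q$, exactly as was done for $G$ in \autoref{lemma shape critica}, so that it suffices to check the vanishing of $\ell_1[G_q]$ on test functions $\varphi\in C^\infty(\partial B_1)$ with zero mean on $\partial B_1$. First I would recall that $G_q(\om)=T(\om)^q\Lambda(\om)P(\om)^{-\beta_q}$, and apply the Leibniz rule for shape derivatives together with the chain rule for the power functions $t\mapsto t^q$ and $t\mapsto t^{-\beta_q}$, evaluated at the ball. This gives
\[
\ell_1[G_q](\varphi)=\frac{T(B_1)^{q-1}}{P(B_1)^{\beta_q}}\left[q\,\Lambda(B_1)\,\ell_1[T](\varphi)+T(B_1)\,\ell_1[\Lambda](\varphi)-\beta_q\,\frac{\Lambda(B_1)T(B_1)\,\ell_1[P](\varphi)}{P(B_1)}\right].
\]

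The key input is that the first shape derivatives of $T$, $\Lambda$, and $P$ all vanish at $B_1$ when tested against a zero-mean $\varphi$; this is standard (see \cite[Chapter 5]{HP18shape}) and is precisely the fact already invoked in the proof of \autoref{lemma shape critica} and recalled before \autoref{derivate seconde}. Substituting these vanishing identities into the displayed expression immediately yields $\ell_1[G_q](\varphi)=0$ for all zero-mean $\varphi$, and then scale invariance (which kills the radial, nonzero-mean component) promotes this to $\ell_1[G_q](B_1)\equiv 0$, i.e. $B_1$ is a critical shape for $G_q$ in the sense of \autoref{defoptimal}.

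There is essentially no obstacle here: the proof is a one-line consequence of the product/chain rule for shape derivatives plus the known criticality of each individual functional $T$, $\Lambda$, $P$ at the ball. The only point requiring a word of care is the reduction to zero-mean perturbations via scale invariance — one observes that any $\varphi\in C^\infty(\partial B_1)$ splits as a constant (radial dilation direction) plus a zero-mean part, the constant part contributes nothing to $\ell_1[G_q]$ because $G_q$ is $0$-homogeneous under scaling, and the zero-mean part is handled as above. I would write this up in two or three lines mirroring verbatim the structure of the proof of \autoref{lemma shape critica}.
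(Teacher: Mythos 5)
Your proposal is correct and follows essentially the same route as the paper: the paper's own argument (given for $F_q$ and $G$ in \autoref{lemma shape critica}, and implicitly reused for $G_q$) is exactly this reduction to zero-mean perturbations by scale invariance, followed by Leibniz's rule applied to $T^q\Lambda P^{-\beta_q}$ and the vanishing at the ball of the first shape derivatives of $T$, $\Lambda$, and $P$ on zero-mean directions. Nothing further is needed.
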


\begin{lemma}\label{lemma der seconde Gq}
Let $\varphi\in C^{2,\gamma}(\partial B_1)$, with $\gamma>0$ and consider $G_q$ the functional defined in \eqref{def G_q}, with $q>0$. Using the notation introduced in \autoref{Teorema struttura}, it holds that 
\begin{equation*}
    \begin{split}
        \ell_2[G_q](B_1)(\varphi,\varphi)&=-\dfrac{T^{q-1}(B_1)\Lambda(B_1)}{P^{\beta_q}(B_1)m^2(m+2)(m-1)}\sum_{k=2}^{\infty}\sum_{l=1}^{N_k}[(q(m+2)-2)k^2+\\
        &+(3m-4)(q(m+2-2))k-(m-1)(3q(m+2)+4m-6-4c_k)]\varphi_{k,l}^2.
    \end{split}
\end{equation*}
\
\end{lemma}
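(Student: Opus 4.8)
The plan is to repeat, for $G_q$, the direct computation already carried out for $G$ and $F_q$ in the proof of \autoref{der F eG}. Since $G_q$ is invariant under translations and dilations, it suffices to evaluate $\ell_2[G_q](B_1)(\varphi,\varphi)$ on functions $\varphi\in C^{2,\gamma}(\partial B_1)\cap\mathcal{T}^0(\partial B_1)$; by \autoref{oss sferiche Ts} this means $\varphi_{0,1}=0$ and $\varphi_{1,l}=0$, so that all the sums below run over $k\geq 2$, the range where the formulas of \autoref{derivate seconde} take their general form. Following the notation of \autoref{notazione derivate seconde} I set $g_q(t):=G_q(B_{t\varphi})=\tau(t)^{q}\,\lambda(t)\,p(t)^{-\beta_q}$, so that, by \autoref{oss calcolo l2}, $\ell_2[G_q](B_1)(\varphi,\varphi)=g_q''(0)$.

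First I would differentiate $g_q$ twice and evaluate at $t=0$. Since $\varphi$ has zero mean, $\tau'(0)=\lambda'(0)=p'(0)=0$, so all terms containing a single first derivative drop out and
\[
g_q''(0)=\frac{\tau(0)^{q}\,\lambda''(0)}{p(0)^{\beta_q}}
+\frac{q\,\tau(0)^{q-1}\,\tau''(0)\,\lambda(0)}{p(0)^{\beta_q}}
-\beta_q\,\frac{\tau(0)^{q}\,\lambda(0)}{p(0)^{\beta_q}}\,\frac{p''(0)}{p(0)}.
\]
I would then insert the explicit second derivatives $\tau''(0)$, $\lambda''(0)$, $p''(0)$ from \autoref{derivate seconde}, together with the explicit constants $T(B_1)=\tfrac{\omega_m}{m(m+2)}$ and $P(B_1)=m\omega_m$ (the latter also eliminates the factor $\beta_m$ entering $\lambda''(0)$). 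Collecting the coefficient of $\varphi_{k,l}^2$ yields, for each $k\geq 2$, the factor $\tfrac{T^{q-1}(B_1)\Lambda(B_1)}{P^{\beta_q}(B_1)}$ times a second-degree polynomial in $k$ whose constant term carries $c_k$.

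The remaining step is the algebraic simplification of that polynomial. The key identity is $(m-1)\beta_q=q(m+2)-2$ (equivalently $m\alpha_q=(m-1)\beta_q$), which clears the denominators $m$, $m+2$, $m-1$ consistently and lets one factor out $\tfrac{1}{m^2(m+2)(m-1)}$, producing the stated coefficient $(q(m+2)-2)k^2+(3m-4)(q(m+2)-2)k-(m-1)(3q(m+2)+4m-6-4c_k)$. A convenient consistency check is $q=1$: then $\beta_1=\tfrac{m}{m-1}$ and $q(m+2)-2=m$, and the expression reduces precisely to \eqref{der G}. Alternatively one could exploit $g_q(t)=f_q(t)\,\vol(t)^{\alpha_q}p(t)^{-\beta_q}$, differentiate twice using $\vol'(0)=p'(0)=f_q'(0)=0$, and combine \eqref{der F_q} with $\vol''(0)$ and $p''(0)$; this amounts to the same computation. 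There is no analytic obstacle here beyond what was already handled for $F_q$ and $G$: the main difficulty is purely the bookkeeping, keeping track of the powers of $\omega_m$, $m$, $m+2$, $m-1$ while merging the three second-derivative formulas and carrying the $c_k$-term (which appears only via $\lambda''(0)$) correctly.
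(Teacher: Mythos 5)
Your proposal is correct and follows essentially the same route as the paper: restrict to $\varphi\in\mathcal{T}^0(\partial B_1)$ by translation/scaling invariance, identify $\ell_2[G_q](B_1)(\varphi,\varphi)$ with $g_q''(0)$ via \autoref{oss calcolo l2}, differentiate $\tau(t)^q\lambda(t)p(t)^{-\beta_q}$ twice using the vanishing of the first derivatives, insert the formulas of \autoref{derivate seconde}, and simplify with $(m-1)\beta_q=q(m+2)-2$ (which you state correctly, fixing a typo in the paper's own computation). The $q=1$ consistency check against \eqref{der G} is a nice addition but not needed.
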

\begin{proof}
Since $G_q$ is invariant by translation and scaling, it is sufficient to compute the derivatives for $\varphi\in C^{2,\gamma}(\partial\om)\cap \mathcal{T}^0(\partial \om)$

We use the notation introduced in \autoref{notazione derivate seconde} and consider the function 
\[g_q:t\to G_q(\om_{th}).\] 
By \autoref{oss calcolo l2}, we have that $\ell_2(G_q)(\varphi,\varphi)=g_q''(0)$. Moreover, we notice that 

\[g_q(t)=\dfrac{\lambda(t)\tau(t)^q}{p(t)^{\beta_q}}.\] 
Hence, deriving twice and using that $\tau'(0)=\lambda'(0)=\vol'(0)=0$ we have that
\[g''_q(0)=\dfrac{\lambda''(0)\tau(0)^q}{p^{\beta_q}(0)}+\dfrac{q\lambda(0)\tau(0)^{q-1}}{p^{\alpha_q}(0)}\tau''(0)-\beta_q\dfrac{\lambda(0)\tau(0)^q}{\vol^{\alpha_q}(0)}\dfrac{p''(0)}{p(0)}.\]

Simplifying and using \autoref{derivate seconde} we obtain 
\[g_q''(0)=\dfrac{\tau^{q-1}(0)}{p^{\beta_q}(0)}\bigg\{\sum_{k=2}^\infty \sum_{l=1}^{N_k}\bigg\{\dfrac{\omega_m}{m(m+2)}\dfrac{\lambda(0)}{m\omega_m}\left[4k+2(m-1)-4c_k\right]\]
\[-q\dfrac{\lambda(0)}{m^2}\left[2k-(m+1)\right]-\beta_q\lambda(0)\dfrac{\omega_m}{m(m+2)}\dfrac{1}{m\omega_m}[k^2+(m-2)k+(m-1)(m-2)]\bigg\}\varphi_{k,l}^2.\]
Using that $\beta_q(m-1)=q(m+2)-1$, we obtain that
\[g_q''(0)=\dfrac{\tau^{q-1}(0)\lambda(0)}{p^{\beta_q}(0)m^2(m+2)(m-1)}\bigg\{\sum_{k=2}^\infty \sum_{l=1}^{N_k}\bigg\{(m-1)\left[4k+2(m-1)-4c_k\right]\]
\[-q(m-1)(m+2)\left[2k-(m+1)\right]-(q(m+2)-2)[k^2+(m-2)k+(m-1)(m-2)]\bigg\}\varphi_{k,l}^2.\]
By direct computation, we have that 
\[g_q''(0)=\sum_{k=2}^{\infty}\sum_{l=1}^{N_k}G_k^q\varphi_{k,l}^2.\]
where 
\[ G_k^q=\dfrac{\tau^{q-1}(0)\lambda(0)}{p^{\beta_q}(0)m^2(m+2)(m-1)}[(2-q(m+2))k^2+(3m-4)(2-q(m+2))k+(m-1)(3q(m+2)+4m-6-4c_k)].\]
\end{proof}

\begin{corollario}\label{Corollario G_q}
  Let $G_q$ be the functional defined in \eqref{def G_q}, with $q>0$, then the following properties hold.
 
    \begin{enumerate}[(i)]
        \item If $q<\dfrac{2}{m+2}$, $G_q$ verifies assumption \Csref on $B_1$ for $s=1$. Moreover and $B_1$ is a strictly stable shape in $H^1(\partial B_1)$ for $G_q$.
        \item If $q=\dfrac{2}{m+2}$, $G_q$ verifies assumption \Csref on $B_1$ for $s=0$. Moreover and $B_1$ is a strictly stable shape in $H^{0}(\partial B_1)$ for $G_q$.
        \item If $\dfrac{2}{m+2}<q<q':=\dfrac{2}{m+2}\left[1+2\dfrac{m-1}{3m-1}\left(\dfrac{j_{\frac{m}{2}-1}}{m}-2\right)\right],$ $B_1$ is a saddle point for $G_q$, i.e. there exist $\varphi_1$ and $\varphi_2\in H^1(\partial B_1)$ such that \[\ell_2(G_q)(\varphi_1,\varphi_1)>0\quad \text{and}\quad \ell_2(G_q)(\varphi_2,\varphi_2)<0.\]
        \item If $q=q'$, for all  $\varphi\in H^\frac{1}{2}(\partial B_1)$  \[\ell_2(G_q)(\varphi,\varphi)\leq0.\]
        Moreover, there exists $\varphi_1\in \mathcal{T}^{1}(\partial B_1)\setminus\{0\}$ such that  
        \[\ell_2(G_q)(\varphi_1,\varphi_1)=0.\]
        \item If $q>q'$, $-G_q$ verifies assumption \Csref on $B_1$ for $s=1$. Moreover and $B_1$ is a strictly stable shape in $H^{1}(\partial B_1)$ for $-G_q$.
    \end{enumerate}
\end{corollario}

\begin{proof}
By \autoref{lemma der seconde Gq}
\[\ell_2(G_q)(\varphi,\varphi)=\sum_{k=2}^\infty\sum_{l=1}^{N_k}G^q_k \varphi_{k,l}^2.\]
where 
\[ G_k^q=\alpha[(2-q(m+2))k^2+(3m-4)(2-q(m+2))k+(m-1)(3q(m+2)+4m-6-4c_k)],\] 
with
\[ \alpha=\dfrac{T^{q-1}(B_1)\Lambda(B_1)}{P^{\beta_q}(B_1)m^2(m+2)(m-1)}.\]
Notice that if $q_1<q_2$, then for all $k\geq 2$
\[G_k^{q_1}>G_k^{q_2}.\]

We analyse the different cases.
    \begin{enumerate}
        \item[(i)] Let $q\leq \frac{2}{m+2}$. Since $\set{c_k}$ is decreasing, we have that if $k_1\leq k_2$, then
\[G^q_{k_1}\leq G_{k_2}^q.\]

Therefore, since $G^q_k$ is increasing in $k$ and decreasing in $q$, $G^q_k>0$ for all $k\geq 2$ and $q\leq \dfrac{2}{m+2}$ if and only if $G^\frac{2}{m+2}_2>0$. 
This inequality holds, since, by \autoref{oss ck},
\begin{equation}\label{Gkpos}
   G_2^\frac{2}{m+2}= \alpha(6+4m-6-4c_2)>0.
\end{equation}

Since $\ell_2(G_q)$ admits a continuous extension to $H^1(\partial B_1)$, if $q < \frac{2}{m+2}$, the ball $B_1$ is a strictly stable shape for 
$G_q$ in $H^{1}(\partial B_1)$.

Moreover, since $2-q(m+2)>0$, $G_k^q$  has the same asymptotic behaviour as $(1+k^2)$ for $k$ that approaches $+\infty$, the functional $G_q$ satisfies assumption~\Csref{} with $s=1$.

\item[(ii)] If $q = \frac{2}{m+2}$, we have that $G_q=F_q$. Therefore, the thesis follows from \emph{(ii)} in \autoref{Corollario F_q}.

\item[(iii)] Let $\dfrac{2}{2+m}<q<q'$ then we have that $[2-q(m+2)]<0$, so there exists $k^*$ such that for $k>k^*$,  $G_k^q<0$. Moreover, notice that $q'$ is such that 
\[G_2^{q'}=0.\]
Hence, for all $\dfrac{2}{m+2}<q<q'$
\[ G_2^q>G^{q'}_2=0\]
Since $G_2^q>0$ and for $k$ sufficiently large $G_k^q<0$, $B_1$ is a saddle point for $G_q$.

\item[(iv)] 
Let $q\geq q'$. We want to prove that  for all $k>2$, 
\begin{equation}\label{mon Gqk}
G_k^q<G_2^q.
\end{equation}
This inequality holds if and only if 
\begin{equation}\label{intermedia decree}
4(m-1)(c_k-c_2)>(k-2)(2-q(m+2))(k+2+3m-4).
\end{equation}
For all $q\geq q'$, 
\[(2-q(m+2))\leq(2-q'(m+2))=-\dfrac{4(m-1)(m-c_2)}{3m-1},\]
hence \eqref{intermedia decree}, is true for all $q\geq q'$ if and only if 
\[c_k>c_2-(k-2)\left(\dfrac{k+3m-2}{3m-1}\right)(m-c_2).\]
This inequality holds, since by \autoref{oss ck} 
\[c_2-(k-2)\left(\dfrac{k+3m-2}{3m-1}\right)(m-c_2)<c_2-(k-2)(m-c_2)=c_2-(k-2)(c_1-c_2)<c_k.\]
The last inequality follows by the existence of a convex extension of the function $k\in\mathbb{N}\to c_k\in\RR$. Hence, we have \eqref{mon Gqk}.\\

Furthermore, if $q=q'$, $G_2^q=0$ and for all  $\varphi\in H^1(\partial \om)$
\[\ell_2(G_q)(\varphi,\varphi)\leq0.\]      Moreover if $\psi\in \mathcal{T}^{\frac{1}{2}}(\partial B_1)\setminus\{0\}$ such that for all $k\geq 3$ $\psi_{k,l}=0$, we have that   
\[\ell_2(G_q)(\psi,\psi)=0. \]

\item[(v)]As a consequence, if $q>q'$, for all  $\varphi\in \mathcal{T}^\frac{1}{2}(\partial \om)$ we have that  \[\ell_2(G_q)(\varphi,\varphi)>0.\]

Hence, since $\ell_2(G_q)$ admits a continuous extension to $H^1(\partial B_1)$, the ball $B_1$ is a strictly stable shape for $-G_q$ in $H^{\frac{1}{2}}(\partial B_1)$.
Moreover, $-G_k^q$  has the same asymptotic behaviour as $(1+k^2)$ for $k$ that approaches $+\infty$. This ensures the functional $-G_q$ satisfies assumption~\Csref{} with $s=1$. 

    \end{enumerate}
\end{proof}

The results of this section lead to the proof of \autoref{teor G_q} following the one of \autoref{teor F_q}.

\section*{Acknowledgments}
The authors were partially supported by Gruppo Nazionale per l’Analisi Matematica, la Probabilità e le loro Applicazioni
(GNAMPA) of Istituto Nazionale di Alta Matematica (INdAM).   \\
Vincenzo Amato was partially supported by  PNRR - Missione 4 “Istruzione e Ricerca” - Componente 2 “Dalla Ricerca all'Impresa” - Investimento 1.2 “Finanziamento di progetti
presentati da giovani ricercatori” CUP:F66E25000010006.

\bibliographystyle{plain}
\bibliography{biblio}

\begin{thebibliography}{10}

\bibitem{abramowitz1948handbook}
M.~Abramowitz and I.A. Stegun.
\newblock {\em Handbook of mathematical functions with formulas, graphs, and mathematical tables}, volume~55.
\newblock US Government printing office, 1948.

\bibitem{AGS}
V.~Amato, N.~Gavitone, and R.~Sannipoli.
\newblock On the optimal sets in {P}{\'o}lya and makai type inequalities, 2025.

\bibitem{AMPS}
V.~Amato, A.~L. Masiello, G.~Paoli, and R.~Sannipoli.
\newblock Sharp and quantitative estimates for the p-torsion of convex sets.
\newblock {\em Nonlinear Differential Equations and Applications NoDEA}, 30(1):12, 2023.

\bibitem{BB21}
M.~van~den {B}erg and G.~Buttazzo.
\newblock On capacity and torsional rigidity.
\newblock {\em Bulletin of the London Mathematical Society}, 53(2):347--359, 2021.

\bibitem{VBP21}
M.~van~den {B}erg, G.~Buttazzo, and A.~Pratelli.
\newblock On relations between principal eigenvalue and torsional rigidity.
\newblock {\em Communications in Contemporary Mathematics}, 23(08):2050093, 2021.

\bibitem{VFNT2016}
M.~van~den Berg, V.~Ferone, C.~Nitsch, and C.~Trombetti.
\newblock On {P}\'{o}lya's inequality for torsional rigidity and first {D}irichlet eigenvalue.
\newblock {\em Integral Equations Operator Theory}, 86(4):579--600, 2016.

\bibitem{VFNT2020thinning}
M.~van~den Berg, V.~Ferone, C.~Nitsch, and C.~Trombetti.
\newblock On a {P}{\'o}lya functional for rhombi, isosceles triangles and thinning convex sets.
\newblock {\em Revista matem{\'a}tica iberoamericana}, 36(7):2091--2105, 2020.

\bibitem{BG25}
M.~van~den {B}erg and N.~Gavitone.
\newblock On functionals involving the p-capacity and the q-torsional rigidity.
\newblock {\em Calculus of Variations and Partial Differential Equations}, 64(8):245, 2025.

\bibitem{BM23}
M.~van~den {B}erg and A.~Malchiodi.
\newblock On some variational problems involving capacity, torsional rigidity, perimeter and measure.
\newblock {\em Advances in Calculus of Variations}, 16(4):961--974, 2023.

\bibitem{VBV2015}
M.van~den Berg, G.~Buttazzo, and B.~Velichkov.
\newblock Optimization problems involving the first {D}irichlet eigenvalue and the torsional rigidity.
\newblock In {\em New trends in shape optimization}, volume 166 of {\em Internat. Ser. Numer. Math.}, pages 19--41. Birkh\"{a}user/Springer, Cham, 2015.

\bibitem{fenchel_bonnesen}
T.~Bonnesen and W.~Fenchel.
\newblock {\em Theory of convex bodies}.
\newblock BCS Associates, Moscow, ID, 1987.
\newblock Translated from the German and edited by L. Boron, C. Christenson and B. Smith.

\bibitem{B14}
L.~Brasco.
\newblock On torsional rigidity and principal frequencies: an invitation to the {K}ohler-{J}obin rearrangement technique.
\newblock {\em ESAIM Control Optim. Calc. Var.}, 20(2):315--338, 2014.

\bibitem{brasco_2020_principal_frequencies}
L.~Brasco.
\newblock On principal frequencies and isoperimetric ratios in convex sets.
\newblock {\em Ann. Fac. Sci. Toulouse Math. (6)}, 29(4):977--1005, 2020.

\bibitem{BM20}
L.~Brasco and D.~Mazzoleni.
\newblock On principal frequencies, volume and inradius in convex sets.
\newblock {\em NoDEA Nonlinear Differential Equations Appl.}, 27(2):Paper No. 12, 26, 2020.

\bibitem{BBL24}
L.~Briani, G.~Buttazzo, and S.~Guarino Lo~Bianco.
\newblock On a reverse {K}ohler-{J}obin inequality.
\newblock {\em Revista matem{\'a}tica iberoamericana}, 40(3):913--930, 2024.

\bibitem{BBP21}
L.~Briani, G.~Buttazzo, and F.~Prinari.
\newblock Inequalities between torsional rigidity and principal eigenvalue of the {$p$}-{L}aplacian.
\newblock {\em Calc. Var. Partial Differential Equations}, 61(2):Paper No. 78, 25, 2022.

\bibitem{BLNP23}
D.~Bucur, J.~Lamboley, M.~Nahon, and R.~Prunier.
\newblock Sharp quantitative stability of the dirichlet spectrum near the ball.
\newblock {\em arXiv preprint arXiv:2304.10916}, 2023.

\bibitem{DMM95}
G.~Dal~Maso and A.~Malusa.
\newblock Approximation of relaxed dirichlet problems by boundary value problems in perforated domains.
\newblock {\em Proceedings of the Royal Society of Edinburgh Section A: Mathematics}, 125(1):99--114, 1995.

\bibitem{dambrine2002variations}
M.~Dambrine.
\newblock On variations of the shape hessian and sufficient conditions for the stability of critical shapes.
\newblock {\em Racsam}, 96:95--121, 2002.

\bibitem{DL19}
M.~Dambrine and J.~Lamboley.
\newblock Stability in shape optimization with second variation.
\newblock {\em Journal of Differential Equations}, 267(5):3009--3045, 2019.

\bibitem{dauge2006elliptic}
M.~Dauge.
\newblock {\em Elliptic boundary value problems on corner domains: smoothness and asymptotics of solutions}, volume 1341.
\newblock Springer, 2006.

\bibitem{DPDBG18}
F.~Della~Pietra, G.~Di~Blasio, and N.~Gavitone.
\newblock Sharp estimates on the first dirichlet eigenvalue of nonlinear elliptic operators via maximum principle.
\newblock {\em Advances in Nonlinear Analysis}, 9(1):278--291, 2018.

\bibitem{DPN14}
F.~Della~Pietra and N.~Gavitone.
\newblock Sharp bounds for the first eigenvalue and the torsional rigidity related to some anisotropic operators.
\newblock {\em Mathematische Nachrichten}, 287(2-3):194--209, 2014.

\bibitem{EFT}
L.~Esposito, N.~Fusco, and C.~Trombetti.
\newblock A quantitative version of the isoperimetric inequality: the anisotropic case.
\newblock {\em Annali della Scuola Normale Superiore di Pisa-Classe di Scienze}, 4(4):619--651, 2005.

\bibitem{Faber1923}
G.~Faber.
\newblock Beweis, dass unter allen homogenen membranen von gleicher fl\"ache und gleicher spannung die kreisf\"ormige den tiefsten grundton gibt.
\newblock {\em Sitzungsberichte der mathematisch-physikalischen Klasse der Bayerischen Akademie der Wissenschaften}, pages 169--172, 1923.

\bibitem{henrot2006extremum}
A.~Henrot.
\newblock {\em Extremum problems for eigenvalues of elliptic operators}.
\newblock Springer, 2006.

\bibitem{HP18shape}
A.~Henrot and M.~Pierre.
\newblock {\em Shape Variation and Optimization: A Geometrical Analysis}.
\newblock EMS tracts in mathematics. European Mathematical Society, 2018.

\bibitem{ismail1995bounds}
M.~Ismail and M.~Muldoon.
\newblock Bounds for the small real and purely imaginary zeros of bessel and related functions.
\newblock {\em Methods Appl. Anal}, 2(1):1--21, 1995.

\bibitem{KJ2}
M.T. Kohler-Jobin.
\newblock Une m\'ethode de comparaison isop\'erim\'etrique de fonctionnelles de domaines de la physique math\'ematique. {I}. {U}ne d\'emonstration de la conjecture isop\'erim\'etrique {$P\lambda \sp{2}\geq \pi j\sp{4}\sb{0}/2$} de {P}\'olya et {S}zeg\"o.
\newblock {\em Z. Angew. Math. Phys.}, 29(5):757--766, 1978.

\bibitem{KJ1}
M.T. Kohler-Jobin.
\newblock Symmetrization with equal {D}irichlet integrals.
\newblock {\em SIAM J. Math. Anal.}, 13(1):153--161, 1982.

\bibitem{Krahn1925}
E.~Krahn.
\newblock Über eine von rayleigh formulierte minimaleigenschaft des kreises.
\newblock {\em Mathematische Annalen}, 94:97--100, 1925.

\bibitem{LS}
D.~A. La~Manna and S.~Sannipoli.
\newblock Some isoperimetric inequalities involving the boundary momentum, 2025.

\bibitem{L99s}
L.J. Landau.
\newblock Ratios of bessel functions and roots of $\alpha \text{J}_\nu (x)+ x\text{J}_{\nu}' (x)= 0$.
\newblock {\em Journal of mathematical analysis and applications}, 240(1):174--204, 1999.

\bibitem{lions}
J.~L. Lions and E.~Magenes.
\newblock {\em Non-homogeneous boundary value problems and applications: Vol. 1}.
\newblock Springer Science \& Business Media, 1972.

\bibitem{LZ22}
I.~Lucardesi and D.~Zucco.
\newblock On blaschke--santal{\'o} diagrams for the torsional rigidity and the first dirichlet eigenvalue.
\newblock {\em Annali di Matematica Pura ed Applicata (1923-)}, 201(1):175--201, 2022.

\bibitem{M62}
E.~Makai.
\newblock On the principal frequency of a membrane and the torsional rigidity of a beam.
\newblock {\em Studies in mathematical analysis and related topics}, pages 227--231, 1962.

\bibitem{N14}
C.~Nitsch.
\newblock An isoperimetric result for the fundamental frequency via domain derivative.
\newblock {\em Calculus of Variations and Partial Differential Equations}, 49(1):323--335, 2014.

\bibitem{polya1947elasticite}
G.~P\'olya.
\newblock Elasticite-sur la frequence fondamentale des membranes vibrantes et la resistance elastique des tiges a la torsion.
\newblock {\em Comptes rendus hebdomdaires des seances de l'academie des sciences}, 225(7):346--348, 1947.

\bibitem{PTR48}
G.~P\'olya.
\newblock Torsional rigidity, principal frequency, electrostatic capacity and symmetrization.
\newblock {\em Quart. Appl. Math.}, 6:267--277, 1948.

\bibitem{polya1960two}
G.~P{\'o}lya.
\newblock Two more inequalities between physical and geometrical quantities.
\newblock {\em J. Indian Math. Soc.(NS)}, 24(1961):413--419, 1960.

\bibitem{Rayleigh1877}
J.~W.~S. Rayleigh.
\newblock {\em The Theory of Sound}, volume~I.
\newblock Macmillan, 1877.

\bibitem{SaintVenant1856}
J.~B.~de Saint-Venant.
\newblock Mémoire sur la torsion des prismes.
\newblock {\em Mémoires présentés par divers savants à l'Académie des Sciences}, 14:233--560, 1856.

\end{thebibliography}
\Addresses
\end{document}